\numberwithin{equation}{section}
\numberwithin{figure}{section}
\theoremstyle{plain}
\newtheorem{thm}{\protect\theoremname}
\theoremstyle{plain}
\newtheorem{prop}[thm]{\protect\propositionname}
\theoremstyle{plain}
\newtheorem{cor}[thm]{\protect\corollaryname}
\theoremstyle{remark}
\newtheorem{rem}[thm]{\protect\remarkname}
\theoremstyle{plain}
\newtheorem{lem}[thm]{\protect\lemmaname}
\theoremstyle{definition}
\newtheorem{example}[thm]{\protect\examplename}
\providecommand{\corollaryname}{Corollary}
\providecommand{\examplename}{Example}
\providecommand{\lemmaname}{Lemma}
\providecommand{\propositionname}{Proposition}
\providecommand{\remarkname}{Remark}
\providecommand{\theoremname}{Theorem}
\begin{document}
\title[Semi-positive Quantization]{Geometric quantization results
for semi-positive line bundles on
a Riemann surface}

\author{George Marinescu}
\address{Universit{\"a}t zu K{\"o}ln, Mathematisches Institut, 
Weyertal 86-90, 50931 K{\"o}ln, 
Deutschland    \newline
\mbox{\quad}\,Institute of Mathematics `Simion Stoilow', 
Romanian Academy, Bucharest, Romania}
\email{gmarines@math.uni-koeln.de}
\author{Nikhil Savale}
\thanks{G.\ M.\ and N.\ S.\ are partially supported by the DFG funded project
CRC/TRR 191 `Symplectic Structures in Geometry, Algebra and Dynamics'
(Project-ID 281071066-TRR 191). G.\ M.\ 
is partially supported by the DFG Priority Program 2265 
`Random Geometric Systems' (Project-ID 422743078)
and the ANR-DFG project QuaSiDy (Project-ID 490843120).}
\address{Universität zu Köln, Mathematisches Institut, Weyertal 86-90, 50931
Köln, Germany}
\email{nsavale@math.uni-koeln.de}
\subjclass[2000]{53C17, 58J50, 32A25, 53D50}
\begin{abstract}
In earlier work \cite{Marinescu-Savale18} the authors proved the
Bergman kernel expansion for semipositive line bundles over a Riemann
surface whose curvature vanishes to atmost finite order at each point.
Here we explore the related results and consequences of the expansion
in the semipositive case including: Tian's approximation theorem for
induced Fubini-Study metrics, leading order asymptotics and composition
for Toeplitz operators, asymptotics of zeroes for random sections
and the asymptotics of holomorphic torsion. 
\end{abstract}

\maketitle
\tableofcontents{}

\section{Introduction}

Geometric quantization is a procedure to relate classical observables
(smooth functions) on a phase space (a symplectic manifold) to quantum
observables (bounded linear operators) on the corresponding quantum
space (sections of a line bundle). In the case when the line bundle
in question is positive, and consequently the underlying manifold
Kahler, a well known quantization recipe is that of Berezin-Toeplitz
\cite{Berezin-Quantization-74,Ma-ICM-2010,Schlichenmaier-survey-2010}.
Showing the validity of the quantization prodecure involves proving
that it has the right properties in the semiclassical limit. Key to
the proof is the analysis of the semiclassical limit of the Bergman
kernel \cite{Catlin97-Bergmankernel,Dai-Liu-Ma2006-Bergman-kernel,MaMarinescu2008,Ma-Marinescu,Zelditch98-Bergmankernel}.
In earlier work \cite{Marinescu-Savale18} the authors proved the
Bergman kernel expansion in the case when the underlying line bundle
is only semipositive, with curvature vanishing at finite order at
each point, on a Riemann surface. It is the purpose of this article
to explore the corresponding applications of the expansion therein
to results in geometric quantization in the semi-positive case. These
include Tian's approximation theorem for induced Fubini-Study metrics,
leading order asymptotics and composition for Toeplitz operators,
asymptotics of zeroes for random sections and the asymptotics of holomorphic
torsion. 

We now state our results more precisely. Let $Y^{2}$ be a compact
Riemannian surface with complex structure $J$ and Hermitian metric
$h^{TY}$. Consider holomorphic, Hermitian line and vector bundles
$\left(L,h^{L}\right)$, $\left(F,h^{F}\right)$ on $Y$ and let $\nabla^{L},\nabla^{F}$
be the corresponding Chern connections. Denote by $R^{L}=\left(\nabla^{L}\right)^{2}\in\Omega^{2}\left(Y;i\mathbb{R}\right)$
the corresponding curvature of the line bundle. The order of vanishing
of $R^{L}$ at a point $y\in Y$ is now defined 
\begin{equation}
r_{y}-2=\textrm{ord}_{y}\left(R^{L}\right)\coloneqq\min\left\{ l|J^{l}\left(\Lambda^{2}T^{*}Y\right)\ni j_{y}^{l}R^{L}\neq0\right\} ,\quad r_{y}\geq2,\label{eq:order vanishing curvature}
\end{equation}
where $j^{l}R^{L}$ denotes the $l$th jet of the curvature. We shall
assume that this order of vanishing is finite at any point of the
manifold i.e. 
\begin{equation}
r\coloneqq\max_{y\in Y}r_{y}<\infty.\label{eq: curv vanishes finite order}
\end{equation}
The function $y\mapsto r_{y}$ being upper semi-continuous then gives
a decomposition of the manifold $Y=\bigcup_{j=2}^{r}Y_{j}$; $Y_{j}\coloneqq\left\{ y\in Y|r_{y}=j\right\} $
with each $Y_{\leq j}\coloneqq\bigcup_{j'=0}^{j}Y_{j'}$ being open.
Furthermore, the curvature is assumed to be semipositive: $R^{L}\left(w,\bar{w}\right)\geq0$,
for all $w\in T^{1,0}Y$.

Associated to the above one has the Kodaira Laplacian 
\[
\Box_{k}^{q}:\Omega^{0,q}\left(Y;F\otimes L^{k}\right)\rightarrow\Omega^{0,q}\left(Y;F\otimes L^{k}\right),\quad0\leq q\leq1,
\]
acting on tensor powers. The kernel of the Kodaira Laplacian $\ker\Box_{k}^{q}=H^{q}\left(X;F\otimes L^{k}\right)$
is cohomological and corresponds to holomorphic sections. The Bergman
kernel $\Pi_{k}^{q}\left(y,y'\right)$ is the Schwartz kernel of the
orthogonal projector $\Pi_{k}^{q}:\Omega^{0,q}\left(Y;F\otimes L^{k}\right)\rightarrow\ker\Box_{k}^{q}$.
Its value on the diagonal is 
\[
\Pi_{k}^{q}\left(y,y\right)=\sum_{j=1}^{N_{k}^{q}}\left|s_{j}\left(y\right)\right|^{2},\quad N_{k}^{q}\coloneqq\dim H^{q}\left(X;F\otimes L^{k}\right),
\]
for an orthonormal basis $\left\{ s_{j}\right\} _{j=1}^{N_{k}^{q}}$
of $H^{q}\left(X;F\otimes L^{k}\right)$. Under these assumptions
one has $H^{1}\left(X;F\otimes L^{k}\right)=0$ for $k\gg0$. We now
first recall our theorem from \cite{Marinescu-Savale18} on the asymptotics
of the Bergman kernel $\Pi_{k}\coloneqq\Pi_{k}^{0}$. 
\begin{thm}[{\cite[Theorem 3]{Marinescu-Savale18}}]
\label{thm:Bergman kernel expansion}
Let $Y$ be a compact Riemann surface and $(L,h^{L})\to Y$ a semipositive
line bundle whose curvature $R^{L}$ vanishes to finite order at any
point. Let $(F,h^{F})\to Y$ be another Hermitian holomorphic vector
bundle. Then the Bergman kernel $\Pi_{k}\coloneqq\Pi_{k}^{0}$ has
the pointwise asymptotic expansion on diagonal 
\begin{equation}
\Pi_{k}\left(y,y\right)=k^{2/r_{y}}\left[\sum_{j=0}^{N}c_{j}\left(y\right)k^{-2j/r_{y}}\right]+O\left(k^{-2N/r_{y}}\right),\quad\forall N\in\mathbb{N}.\label{eq:Bergmankernelexpansion}
\end{equation}
Here $c_{j}$ are sections of $\textrm{End}\left(F\right)$, with
the leading term $c_{0}\left(y\right)=\Pi^{g_{y}^{TY},j_{y}^{r_{y}-2}R^{L},J_{y}^{TY}}\left(0,0\right)>0$
being given in terms of the Bergman kernel of the model Kodaira Laplacian
on the tangent space at $y$ \prettyref{eq: model Kodaira Laplace}. 
\end{thm}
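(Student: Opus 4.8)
The plan is to adapt the analytic localization technique for Bergman kernels of positive line bundles (Dai--Liu--Ma, Ma--Marinescu) to the degenerate semipositive setting, the crucial new feature being that the natural rescaling at a point $y$ is by $k^{-1/r_y}$, governed by the vanishing order in \prettyref{eq:order vanishing curvature}, rather than by the uniform exponent $\tfrac12$ of the nondegenerate case. The first and principal step is a \emph{uniform spectral gap}: there is $c>0$ with $\Box_k^1\geq c\,k^{2/r}$ for $k\gg 0$, $r$ as in \prettyref{eq: curv vanishes finite order}. On a Riemann surface $\bar\partial$ maps $\Omega^{0,0}$ into $\Omega^{0,1}$ and nothing further, so $\Box_k^0=\bar\partial_k^*\bar\partial_k$ and $\Box_k^1=\bar\partial_k\bar\partial_k^*$ are isospectral away from $0$; the gap thus also bounds the positive part of $\Box_k^0$ below, yields $H^1(Y;F\otimes L^k)=0$ for $k\gg 0$, and makes $\Pi_k$ the projector onto holomorphic sections. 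Proving the gap is the hard analytic input: by Bochner--Kodaira--Nakano, $\Box_k^1$ agrees with a magnetic-type Laplacian up to the zeroth-order term $k\,R^L(w,\bar w)$ plus $O(1)$, and since $R^L\geq 0$ vanishes to order $r_y-2$ this term alone does not suffice; one localizes, rescales by $k^{-1/r_y}$, and reduces to a positive lower bound for the model magnetic Laplacian on $\mathbb{C}$ with nonnegative homogeneous magnetic field of degree $r_y-2$ (a Montgomery/Helffer--Morame--type estimate), the rescaling converting this into the rate $k^{2/r}$.

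With the gap in hand, finite propagation speed for $\cos(t\sqrt{\Box_k})$ (equivalently, off-diagonal resolvent estimates) shows that $\Pi_k(y,y)$ coincides, up to an error negligible compared with every power $k^{-N/r_y}$, with the corresponding quantity for a Kodaira Laplacian whose data is altered arbitrarily outside a small ball about $y$; the problem is thereby reduced to a model on $\mathbb{C}$ carrying the Taylor jets of $g^{TY}$, $\nabla^L$, $\nabla^F$ and $R^L$ at $y$. Choosing $J$-compatible normal coordinates and unitary trivializations and performing the substitution $z\mapsto k^{-1/r_y}z$, the conjugated operator expands as $k^{-2/r_y}\Box_k=\mathcal{L}_0+\sum_{j\geq 1}k^{-j/r_y}\mathcal{L}_j$, where $\mathcal{L}_0$ is the model Kodaira Laplacian attached to $g_y^{TY}$, the homogeneous curvature $j_y^{r_y-2}R^L$ and $J_y^{TY}$, and the $\mathcal{L}_j$ are differential operators with polynomial coefficients. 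Semipositivity forces the leading jet to be itself nonnegative, and together with its nonvanishing this makes $\mathcal{L}_0$ nonnegative with its own spectral gap and with a well-defined strictly positive Bergman kernel on the diagonal, which is precisely the asserted leading coefficient $c_0(y)=\Pi^{g_y^{TY},j_y^{r_y-2}R^L,J_y^{TY}}(0,0)>0$.

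One then writes the formal expansion $\Pi_k\sim\sum_j k^{-2j/r_y}\mathcal{F}_j$ and solves the resulting hierarchy: $\mathcal{F}_0$ is the model Bergman projector of $\mathcal{L}_0$, and each $\mathcal{F}_j$ is produced from $\mathcal{F}_0,\dots,\mathcal{F}_{j-1}$ by applying the partial inverse (Green operator) of $\mathcal{L}_0$ and the projection onto $\ker\mathcal{L}_0$, both available because the gap makes $\mathcal{L}_0$ have closed range. The coefficients $c_j(y)$ are the diagonal values of $\mathcal{F}_j$ and are manifestly sections of $\mathrm{End}(F)$. That the formal series is genuinely asymptotic follows in the usual way: the spectral gap gives resolvent bounds, and Sobolev estimates for the rescaled operators uniform in $k$ let one control $\Box_k$ applied to the $N$-th partial sum, yielding the remainder $O(k^{-2N/r_y})$ pointwise on the diagonal.

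The main obstacle, beyond the spectral gap, is uniformity across the stratification $Y=\bigcup_{j=2}^r Y_j$: the function $y\mapsto r_y$ is only upper semicontinuous and jumps on the closed strata $Y_{\geq j}$, so both the rescaling exponent and the limiting model operator vary with $y$ and degenerate as $y$ approaches a higher stratum. Consequently the expansion \prettyref{eq:Bergmankernelexpansion} is only pointwise, with implied constants depending on $y$ and deteriorating near $Y_{>j}$; the delicate part is to run the localization and the rescaled elliptic estimates so that the coarse bounds (e.g.\ the gap $k^{2/r}$) hold with one exponent everywhere while the finer expansion is driven by the pointwise $r_y$.
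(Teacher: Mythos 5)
Your proposal follows the same overall architecture as the paper's proof (spectral gap $\Rightarrow$ finite-propagation localization $\Rightarrow$ rescaling by $k^{-1/r_y}$ $\Rightarrow$ Taylor expansion of the rescaled operator and a formal/resolvent hierarchy $\Rightarrow$ remainder estimates), and each of those stages is essentially the one carried out in Section 3.  Two points, however, deserve comment.

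First, your route to the spectral gap is genuinely different.  You propose to prove $\Box_k^1\geq c\,k^{2/r}$ by localizing at each point, rescaling, and invoking Montgomery/Helffer--Morame-type lower bounds for the model magnetic Laplacian on $\mathbb{C}$ with a nonnegative homogeneous magnetic field.  The paper instead passes to the unit circle bundle $X=S^1L$, observes that finite order of vanishing of $R^L$ is equivalent to the bracket-generating condition for the horizontal distribution, and reads off the gap directly from the Rothschild--Stein sharp subelliptic estimate $\|\partial_\theta^{1/r}s\|^2\lesssim\langle\Delta_{g^E,F,\mu_X}s,s\rangle+\|s\|^2$ applied to pullbacks of eigensections (Proposition \ref{prop: gen. spectral gap}); combined with the Lichnerowicz formula and semipositivity of $R^L$ this gives Proposition \ref{prop:Dk spectral estimate} and Corollary \ref{cor: spectral gap Dirac}.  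The paper's argument is global and avoids the delicate gluing you would need: in your route the local model exponent $k^{2/r_y}$ varies with $y$ and degenerates as the stratum changes, so an IMS-type localization with uniformly controlled error has to be set up across the strata before one can extract the worst-case exponent $k^{2/r}$.  This is doable but requires extra care that you have only gestured at; the sub-Riemannian argument bypasses it.

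Second, there is a small but real gap in the way you present the expansion.  The rescaled operator does expand in powers of $k^{-1/r_y}$, as you write: $k^{-2/r_y}\Box_k=\mathcal{L}_0+\sum_{j\geq1}k^{-j/r_y}\mathcal{L}_j$.  But you then assert the Bergman kernel expansion directly in even powers $k^{-2j/r_y}$ without explaining why the odd-power coefficients drop out.  The paper closes this by a parity argument: each $\boxdot_j$ changes sign by $(-1)^j$ under the reflection $x\mapsto -x$, so the diagonal value of the $j$-th coefficient in the resolvent expansion \eqref{eq: jth term kernel expansion} is an odd integral and vanishes for $j$ odd.  You should include this (or an equivalent) step; otherwise the statement \eqref{eq:Bergmankernelexpansion} in powers of $k^{-2/r_y}$ does not follow from the hierarchy you set up.  Apart from these two points your outline is correct, including the positivity of $c_0(y)$ via nondegeneracy of the leading jet and the spectral gap of the model operator $\mathcal{L}_0$.
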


To explain our first consequence of the above note that the cohomology
$H^{0}(Y;F\otimes L^{k})$ is endowed with an $L^{2}$ product induced
by $h^{TY}$, $h^{L}$ and $h^{F}$. This induces a Fubini-Study metric
$\omega_{FS}$ on the projective space $\mathbb{P}\left[H^{0}\left(Y;F\otimes L^{k}\right)^{*}\right]$.
The Kodaira map is now defined
\begin{align}
\Phi_{k}:Y & \rightarrow\mathbb{P}\left[H^{0}\left(Y;F\otimes L^{k}\right)^{*}\right],\nonumber \\
\Phi_{k}\left(y\right) & \coloneqq\left\{ s\in H^{0}\left(Y;F\otimes L^{k}\right)|s\left(y\right)=0\right\} .\label{eq:Kodaira map-1}
\end{align}
It is well known that the map is holomorphic. We now have the semi-positive
version of Tian's approximation theorem.
\begin{thm}
\label{thm:Tian's approx theorem} Let $Y$ be a compact Riemann surface
and $(L,h^{L})$, $(F,h^{F})$ be holomorphic Hermitian line bundles
on $Y$ such that $(L,h^{L})$ is semi-positive and its curvature
vanishes at most at finite order. Then the Fubini-Study forms induced
by the Kodaira map \prettyref{eq:Kodaira map-1} converge uniformly
on $Y$ to the curvature $R^{L}$ of the line bundle with speed $k^{-1/3}$
\[
\left\Vert \frac{1}{k}\Phi_{k}^{*}\omega_{FS}-\frac{i}{2\pi}R^{L}\right\Vert _{C^{0}\left(Y\right)}=O\left(k^{-1/3}\right)
\]
as $k\to\infty$. 
\end{thm}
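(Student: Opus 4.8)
The plan is to follow the classical Tian–Zelditch strategy, replacing the uniform Bergman kernel estimate $\Pi_k(y,y)=\tfrac{k}{2\pi}+O(1)$ by the non-uniform expansion \prettyref{eq:Bergmankernelexpansion}. First I would recall the pointwise formula for the pullback Fubini–Study form. Choosing near a point $y_0$ a local holomorphic frame $e_L$ of $L$ with $|e_L|^2_{h^L}=e^{-\varphi}$ and a local holomorphic frame $e_F$ of $F$, and writing an orthonormal basis $\{s_j\}$ of $H^0(Y;F\otimes L^k)$ as $s_j=f_j\,e_F\otimes e_L^{\otimes k}$ with $f_j$ holomorphic, one has the standard identity
\[
\Phi_k^*\omega_{FS}=\frac{i}{2}\partial\bar\partial\log\Big(\textstyle\sum_j|f_j|^2\Big)
= k\,\frac{i}{2}\partial\bar\partial\varphi+\frac{i}{2}\partial\bar\partial\log\Pi_k(y,y),
\]
since $\sum_j|f_j|^2=e^{k\varphi}|e_F|^{-2}_{h^F}\,\Pi_k(y,y)$ and $\tfrac{i}{2}\partial\bar\partial\log|e_F|^{-2}_{h^F}$ is a fixed smooth form independent of $k$ (it is $-R^F$, contributing $O(1/k)$ after dividing by $k$). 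Recalling $R^L=-\partial\bar\partial\varphi$ one gets, after dividing by $k$,
\[
\frac{1}{k}\Phi_k^*\omega_{FS}-\frac{i}{2\pi}R^L
=\frac{1}{k}\,\frac{i}{2\pi}\,\partial\bar\partial\log\Pi_k(y,y)+O(k^{-1}),
\]
so everything reduces to a $C^0$-bound on the second derivatives of $\log\Pi_k(y,y)$.

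The core of the argument is therefore to prove
\[
\big\|\partial\bar\partial\log\Pi_k(\cdot,\cdot)\big\|_{C^0(Y)}=O(k^{2/3}),
\]
which together with the factor $1/k$ yields the claimed rate $O(k^{-1/3})$ (the worst exponent $2/r_y$ being $1$, attained where $R^L>0$, while at degenerate points $r_y$ up to $r$ only makes $k^{2/r_y}$ smaller; the $k^{-1/3}$ comes from the transition region — see below). The natural route is to upgrade the on-diagonal expansion of Theorem \ref{thm:Bergman kernel expansion} to a \emph{near-diagonal} or $C^2$ version. Two options: (i) differentiate the off-diagonal Bergman kernel asymptotics from \cite{Marinescu-Savale18} (if a $C^2$-version in the $y$-variable is available there, this is immediate); or (ii) use the Bochner–Kodaira / Cauchy estimates to control $\partial\bar\partial\log\Pi_k$ in terms of $\Pi_k$ and its covariant derivatives, combined with the scaling $r_y=r$ model operator. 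I expect the cleanest path is to localize: fix $y_0$, rescale coordinates by the anisotropic dilation adapted to $r_{y_0}$ (the same rescaling used in \cite{Marinescu-Savale18} to extract the model Kodaira Laplacian), and observe that in rescaled coordinates $\Pi_k$ converges with all derivatives to the model Bergman kernel $\Pi^{g_{y_0},j^{r_{y_0}-2}R^L,J}$, whose log has bounded $\partial\bar\partial$. Undoing the rescaling costs a factor $k^{2/r_{y_0}}$ on each of the two derivatives' worth of $\partial\bar\partial$; but because $c_0(y_0)>0$ is bounded below, $\log\Pi_k=\tfrac{2}{r_{y_0}}\log k+\log c_0(y_0)+o(1)$, and the $k$-dependent constant term is killed by $\partial\bar\partial$, leaving only the genuinely $y$-dependent fluctuation, whose size is governed by how fast $r_y$ and $c_0(y)$ vary.

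The main obstacle — and the source of the exponent $\tfrac13$ rather than something better — is the lack of uniformity of the expansion \prettyref{eq:Bergmankernelexpansion} near the degeneracy locus $\bigcup_{j\ge 3}Y_j$, where $r_y$ jumps. Across such a stratum the powers $k^{2/r_y}$ interpolate between different exponents, and the constants $c_j(y)$ in the expansion blow up as one approaches a higher-order point; the honest statement is that \prettyref{eq:Bergmankernelexpansion} holds with remainder uniform only on compact subsets of the interior of each stratum. To get a global $C^0$ bound one must therefore cover $Y$ by (a) a "good" region, a fixed distance $\delta$ away from $\bigcup_{j\ge3}Y_j$, where the expansion is uniform and gives $\partial\bar\partial\log\Pi_k=O(1)$, hence contributes $O(k^{-1})$; and (b) a shrinking tubular neighborhood of width $\sim k^{-\alpha}$ of the degeneracy locus, where one uses only the crude two-sided bound $ck^{2/r}\le\Pi_k(y,y)\le Ck$ (valid globally from the sub/super-solution estimates in \cite{Marinescu-Savale18}) together with a gradient estimate. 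Balancing the error $k^{-\alpha}$ from shrinking the neighborhood against the error $k^{-1+(\text{something})\alpha}$ from the weaker control inside it produces the optimal choice $\alpha=\tfrac13$ and the rate $O(k^{-1/3})$. Making the estimate in region (b) rigorous — i.e.\ a quantitative Lipschitz (or better) bound on $\tfrac1k\log\Pi_k(y,y)$ uniform up to the bad locus, presumably via the maximum principle applied to $\Box_k$ or via off-diagonal decay estimates with explicitly $r$-dependent constants — is the technically delicate step I would spend the most effort on.
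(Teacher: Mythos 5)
You correctly reduce the theorem to the standard Fubini--Study formula and identify the right target estimate: everything hinges on showing $\|\partial\bar\partial\log\Pi_k(\cdot,\cdot)\|_{C^0(Y)}=O(k^{2/3})$, so that dividing by $k$ gives the rate $k^{-1/3}$. This is exactly the reduction the paper makes, via \prettyref{eq:oFS} and \prettyref{eq:hFS1}. Up to this point your plan and the paper coincide.

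The genuine gap is in how you propose to prove the $O(k^{2/3})$ bound. You suggest covering $Y$ by a good region at fixed distance from $\bigcup_{j\geq3}Y_j$ and a shrinking tubular neighborhood of width $k^{-\alpha}$, and then ``balancing'' the contributions to settle on $\alpha=1/3$. But a $C^0$ bound cannot be obtained by shrinking the bad set: you must control $\partial\bar\partial\log\Pi_k$ pointwise at every $y$, including deep inside the tube, and there the crude two-sided bound $ck^{2/r}\le\Pi_k\le Ck$ gives no information about second derivatives of $\log\Pi_k$. The ``gradient estimate'' you invoke as the missing ingredient is precisely where all the work lies, and you leave it as a placeholder; the balancing $k^{-\alpha}$ vs.\ $k^{-1+(\cdot)\alpha}$ is not derived from any stated mechanism and appears to confuse a measure-of-the-bad-set heuristic (suitable for $L^p$-type bounds) with a sup-norm estimate. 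In particular nothing in your sketch explains why the optimal exponent should be exactly $1/3$.

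What the paper actually does is prove a uniform pointwise jet estimate, Lemma \prettyref{lem: uniform estimate on derivative Bergman}: for each $l$,
\[
|j^l\Pi_k(y,y)|\le k^{l/3}\,(1+o(1))\Big[\sup_{y\in Y}\tfrac{|j^l\Pi^{g_y,j^1_yR^L/j^0_yR^L,J_y}(0,0)|}{\Pi^{g_y,j^1_yR^L/j^0_yR^L,J_y}(0,0)}\Big]\,\Pi_k(y,y),
\]
uniformly in $y\in Y$. Applying this with $l=1,2$ controls $\partial\log\Pi_k$ and $\partial\bar\partial\log\Pi_k$ by $k^{1/3}$ and $k^{2/3}$ respectively. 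The proof of this lemma is a multi-scale rescaling argument: near a given $y$ one subdivides a small ball $B_{c_\varepsilon|j^{r_y-2}R^L|}(y)$ into regions according to which jet of $R^L$ dominates at that $k$, and in each region rescales $\tilde\Box_k$ by the matching dilation $\delta_{k^{-1/(j+2)}}$ so that the rescaled operator converges to the corresponding model Kodaira Laplacian; the exponent $1/3$ emerges from the worst intermediate regime, where the effective vanishing order is $3$ (between $r_y=2$ and the next possible even order $r_y=4$), not from balancing a tube width. This is not something one sees from a fixed-width tube decomposition, and it is the specific content your plan is missing.

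\end{document}
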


For the next application we consider the Toeplitz quantization of
functions on $Y$, or more generally sections of $F$. The Toeplitz
operator $T_{f,k}$ operator corresponding to a section $f\in C^{\infty}\left(Y;\textrm{End}\left(F\right)\right)$
is defined via 
\begin{align}
T_{f,k} & :C^{\infty}\left(Y;F\otimes L^{k}\right)\rightarrow C^{\infty}\left(Y;F\otimes L^{k}\right)\nonumber \\
T_{f,k} & \coloneqq\Pi_{k}f\Pi_{k},\label{eq:standard Toeplitz-1}
\end{align}
where $f$ denotes the operator of pointwise composition by $f$.
Each Toeplitz operator above further maps $H^{0}\left(Y;F\otimes L^{k}\right)$
to itself. A generalized Toeplitz operator, see \ref{gen Toeplitz operator}
below, acting on $H^{0}\left(Y;F\otimes L^{k}\right)$ is defined
as one having an asymptotic expansion in $k^{-1}$ with coefficients
being the Toeplitz operators \prettyref{eq:standard Toeplitz-1} as
above. Our next result is now as follows.
\begin{thm}
\label{thm:Toeplitz quantization} Let $(L,h^{L})$ and $(F,h^{F})$
be Hermitian holomorphic line bundles on a compact Riemann surface
$Y$ and assume that $(L,h^{L})$ is semi-positive line bundle and
its curvature $R^{L}$ vanishes to finite order at any point. Given
$f,g\in C^{\infty}\left(Y;\textrm{End}\left(F\right)\right)$, the
Toeplitz operators \prettyref{eq:standard Toeplitz} satisfy 
\begin{align}
\lim_{k\rightarrow\infty}\left\Vert T_{f,k}\right\Vert  & =\left\Vert f\right\Vert _{\infty}\coloneqq\sup_{\substack{y\in Y\\
u\in F_{y}\setminus0
}
}\frac{|f(y)u|_{h^{F}}}{|u|_{h^{F}}}\,,\label{eq: norm Toeplitz-1}\\
T_{f,k}T_{g,k} & =T_{fg,k}+O_{L^{2}\rightarrow L^{2}}\left(k^{-1/r}\right).\label{eq: leading comp. Toeplit-1}
\end{align}
Moreover, the space of generalized Toeplitz operators supported on
the subset $Y_{2}$ where the curvature is positive form an algebra
under operator addition and composition. 
\end{thm}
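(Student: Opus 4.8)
The plan is to handle the three assertions separately: the first two are soft consequences of the positivity of the leading Bergman coefficient $c_0>0$ in Theorem~\ref{thm:Bergman kernel expansion}, the off-diagonal decay of $\Pi_k$, and the spectral gap of $\Box_k^1$ from \cite{Marinescu-Savale18}, while the third localizes the Ma--Marinescu Berezin--Toeplitz calculus to the open set $Y_2$. For the operator norm \eqref{eq: norm Toeplitz-1}, the bound $\limsup_k\lVert T_{f,k}\rVert\le\lVert f\rVert_\infty$ is immediate from $\lVert\Pi_k\rVert\le1$ together with the fact that multiplication by $f$ on $L^2(Y;F\otimes L^k)$ has norm exactly $\lVert f\rVert_\infty$ (which, $F$ being a line bundle, reduces to $\sup_Y\lvert f\rvert$). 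For the reverse inequality I would fix $y_0\in Y$ with $\lvert f(y_0)\rvert=\lVert f\rVert_\infty$ and a unit vector $e_0\in(F\otimes L^k)_{y_0}$, and test $T_{f,k}$ against the peak section $s_k:=\Pi_k(\cdot,y_0)e_0\in H^0(Y;F\otimes L^k)$. By the reproducing property $\lVert s_k\rVert^2=\langle\Pi_k(y_0,y_0)e_0,e_0\rangle\sim c_0(y_0)k^{2/r_{y_0}}\to\infty$, and since $\Pi_k s_k=s_k$ one has $\langle T_{f,k}s_k,s_k\rangle=\int_Y f(y)\lvert s_k(y)\rvert^2\,dy$. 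The off-diagonal estimates of \cite{Marinescu-Savale18} show that $\lvert s_k(y)\rvert^2=\lvert\Pi_k(y,y_0)e_0\rvert^2$ carries negligible mass outside any fixed geodesic ball $B(y_0,\delta)$ relative to $\lVert s_k\rVert^2$, so continuity of $f$ gives $\langle T_{f,k}s_k,s_k\rangle/\lVert s_k\rVert^2\to f(y_0)$ and hence $\liminf_k\lVert T_{f,k}\rVert\ge\lvert f(y_0)\rvert=\lVert f\rVert_\infty$.

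For the composition \eqref{eq: leading comp. Toeplit-1} I would write $T_{f,k}T_{g,k}-T_{fg,k}=\Pi_k f(\Pi_k-\mathrm{Id})g\Pi_k=-\Pi_k f\,(\mathrm{Id}-\Pi_k)g\Pi_k$, which reduces the claim to the bound $\lVert(\mathrm{Id}-\Pi_k)g\Pi_k\rVert_{L^2\to L^2}=O(k^{-1/r})$. Given $s\in L^2$, put $t:=\Pi_k s$ and $u:=(\mathrm{Id}-\Pi_k)(gt)$; then $u\perp H^0=\ker\bar\partial$ and, by the Leibniz rule, $\bar\partial u=\bar\partial(gt)=(\bar\partial g)t$. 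Since $H^1(Y;F\otimes L^k)=0$ for $k\gg0$ one has $u=\bar\partial^*(\Box_k^1)^{-1}\bar\partial u$, whence $\lVert u\rVert^2=\langle(\Box_k^1)^{-1}\bar\partial u,\bar\partial u\rangle\le\lambda_1(\Box_k^1)^{-1}\lVert\bar\partial g\rVert_{C^0}^2\lVert s\rVert^2$; inserting the spectral gap estimate $\Box_k^1\ge c\,k^{2/r}$ of \cite{Marinescu-Savale18} yields $\lVert(\mathrm{Id}-\Pi_k)g\Pi_k\rVert=O(k^{-1/r})$ and the claim follows.

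Finally, for the algebra statement, closure under addition is immediate from $T_{f,k}+T_{g,k}=T_{f+g,k}$, so only closure under composition is at issue; inserting the expansions of two generalized Toeplitz operators supported in $Y_2$, it suffices to show that for $f,g\in C^\infty(Y)$ supported in the open set $Y_2$ (where $r_y\equiv2$) the product admits a full expansion
\[T_{f,k}T_{g,k}=\sum_{j=0}^N k^{-j}T_{C_j(f,g),k}+O_{L^2\to L^2}(k^{-N-1})\]
with $C_j$ bidifferential of order $\le2j$, $C_0(f,g)=fg$, and $\mathrm{supp}\,C_j(f,g)\subset\mathrm{supp}\,f\cap\mathrm{supp}\,g\subset Y_2$. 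The point is that over a neighbourhood of $Y_2$ the expansion of Theorem~\ref{thm:Bergman kernel expansion} is nothing but the standard near-diagonal Bergman expansion in powers of $k^{-1}$ at the scale $k^{-1/2}$; the off-diagonal decay of $\Pi_k$ localizes $T_{f,k}$, $T_{g,k}$ and their product near $\mathrm{supp}\,f$, $\mathrm{supp}\,g$, so modulo $O(k^{-\infty})$ one is reduced to a computation inside $Y_2$, where substituting this expansion into the double integral for $T_{f,k}T_{g,k}(y,y')$ and carrying out the stationary-phase/Taylor expansion exactly as in the positive case of \cite{MaMarinescu2008,Ma-Marinescu} produces the bidifferential $C_j$. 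Since these symbols remain supported in $Y_2$, iterating the argument over the two expansions shows the composite is again a generalized Toeplitz operator supported in $Y_2$.

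The main obstacle is this last step: one has to verify that near $Y_2$ the semipositive Bergman kernel of Theorem~\ref{thm:Bergman kernel expansion} genuinely coincides, to all orders and near-diagonally, with the Ma--Marinescu kernel, so that the stationary-phase bookkeeping and the identification of the composition coefficients as Toeplitz operators with bidifferential symbols carry over verbatim, and that the contributions coming from $Y\setminus Y_2$ and from the cutoff to $Y_2$ are genuinely $O(k^{-\infty})$ in operator norm. By comparison, parts \eqref{eq: norm Toeplitz-1}--\eqref{eq: leading comp. Toeplit-1} are routine, using only $c_0>0$, off-diagonal decay, and the gap $\Box_k^1\gtrsim k^{2/r}$.
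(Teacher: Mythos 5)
Your proof is correct, and for the two quantitative claims it takes routes that are genuinely different from (and in fact more elementary than) the paper's. For the norm asymptotic \eqref{eq: norm Toeplitz-1}, the paper first establishes the lower bound only at points of $Y_2$ by importing \cite[Theorem 7.4.2]{Ma-Marinescu}, and then handles a maximizing point $y_0\in Y\setminus Y_2$ by an approximation/density argument using that $Y_2$ is open and dense. Your peak-section argument $s_k=\Pi_k(\cdot,y_0)e_0$ avoids this case split entirely: it relies only on $\|s_k\|^2=\Pi_k(y_0,y_0)\to\infty$ (from the diagonal expansion or the uniform lower bound of Lemma \ref{lem: uniform estimate bergman kernel}) and on the off-diagonal localization \eqref{eq:Bergman localization}, both of which are valid at degenerate points as well; this is cleaner and equally rigorous. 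For the composition \eqref{eq: leading comp. Toeplit-1}, the paper computes $T_{f,k}T_{g,k}(\cdot,y)$ by repeated localization and rescaling of the Bergman kernel and a Taylor expansion of the symbol. Your argument instead reduces to $\|(\mathrm{Id}-\Pi_k)g\Pi_k\|=O(k^{-1/r})$ and proves it via the Bochner--Kodaira/$\bar\partial$ estimate: with $u=(\mathrm{Id}-\Pi_k)(gt)$, $t=\Pi_k s$, one has $\bar\partial u=(\bar\partial g)t$, $u\perp\ker\bar\partial$, $u=\bar\partial^*(\Box_k^1)^{-1}\bar\partial u$ thanks to $H^1=0$, and the spectral gap $\Box_k^1\geq c_1 k^{2/r}-c_2$ of Corollary \ref{cor: spectral gap Dirac} closes the estimate. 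That is essentially the method of \cite[Theorem 1.4]{Hsiao-Marinescu2017} (which the paper explicitly mentions as a precursor) and is shorter than the paper's kernel computation; it buys you a proof that uses only soft facts (Leibniz, Hodge theory, spectral gap) rather than the full near-diagonal rescaling. The third assertion you treat essentially as the paper does, via localization to $Y_2$ and the observation that the expansion there is the usual positive-case one uniformly on compacta (Remark \ref{rem: Recovering positive case}); you correctly flag as the main point that the semipositive expansion coincides to all orders with the Ma--Marinescu kernel on $Y_2$ and that contributions from outside $\mathrm{supp}\,f\cup\mathrm{supp}\,g$ are $O(k^{-\infty})$ in operator norm.
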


For our next result, we consider the asymptotics of zeroes of random
sections associated to tensor powers. To state the result first note
that the natural $L^{2}$ metric on $H^{0}\left(Y;F\otimes L^{k}\right)$
gives rise to a probability density $\mu_{k}$ on the sphere 
\[
SH^{0}\left(Y;F\otimes L^{k}\right)\coloneqq\left\{ s\in H^{0}\left(Y;F\otimes L^{k}\right)|\left\Vert s\right\Vert =1\right\} ,
\]
of finite dimension $\chi\left(Y;F\otimes L^{k}\right)-1$ \prettyref{eq: hol. Euler characteristic}.
We now define the product probability space $\left(\Omega,\mu\right)\coloneqq\left(\Pi_{k=1}^{\infty}SH^{0}\left(Y;F\otimes L^{k}\right),\Pi_{k=1}^{\infty}\mu_{k}\right)$.
To a random sequence of sections $s=\left(s_{k}\right)_{k\in\mathbb{N}}\in\Omega$
given by this probability density, we then associate the random sequence
of zero divisors $Z_{s_{k}}=\left\{ s_{k}=0\right\} $ and view it
as a random sequence of currents of integration in $\Omega_{0,0}\left(Y\right)$.
We now have the following. 
\begin{thm}
\label{thm: random section-1} Let $(L,h^{L})$ and $(F,h^{F})$ be
Hermitian holomorphic line bundles on a compact Riemann surface $Y$
and assume that $(L,h^{L})$ is semi-positive line bundle and its
curvature $R^{L}$ vanishes to finite order at any point. Then for
$\mu$-almost all $s=\left(s_{k}\right)_{k\in\mathbb{N}}\in\Omega$,
the sequence of currents 
\[
\frac{1}{k}Z_{s_{k}}\rightharpoonup\frac{i}{2\pi}R^{L}
\]
converges weakly to the semi-positive curvature form. 
\end{thm}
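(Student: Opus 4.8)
The plan is to adapt the potential-theoretic equidistribution scheme of Shiffman and Zelditch, the essential new input being the Bergman kernel asymptotics of \prettyref{thm:Bergman kernel expansion}. Write $h_{k}\coloneqq h^{F}\otimes(h^{L})^{\otimes k}$ for the induced metric on $F\otimes L^{k}$. For a nonzero $s_{k}\in H^{0}(Y;F\otimes L^{k})$ the zero divisor $Z_{s_{k}}$ is a well-defined effective divisor on the curve $Y$ (finitely many points), and the Poincar\'e--Lelong formula gives, as currents on $Y$,
\[
\frac{1}{k}Z_{s_{k}}=\frac{i}{2\pi}R^{L}+\frac{1}{k}c_{1}(F,h^{F})+\frac{i}{2\pi k}\partial\bar\partial\log|s_{k}|_{h_{k}}^{2}.
\]
Since the middle term tends to $0$ smoothly, the theorem reduces to showing $\frac{1}{k}\int_{Y}\log|s_{k}|_{h_{k}}^{2}\,\partial\bar\partial\varphi\to0$ for every $\varphi\in C^{\infty}(Y)$, for $\mu$-almost every $s=(s_{k})$. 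I would split $\log|s_{k}|_{h_{k}}^{2}=\log\Pi_{k}(y,y)+u_{k}(y)$, with $u_{k}(y)\coloneqq\log\!\big(|s_{k}(y)|_{h_{k}}^{2}/\Pi_{k}(y,y)\big)\le0$ by Cauchy--Schwarz (using $\|s_{k}\|=1$), and handle the two summands separately: (a) $\frac{1}{k}\log\Pi_{k}\to0$ uniformly on $Y$; (b) $\frac{1}{k}u_{k}\to0$ in $L^{1}(Y)$ for $\mu$-almost every $s$.

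Part (a) is where \prettyref{thm:Bergman kernel expansion} enters: it gives $\Pi_{k}(y,y)=k^{2/r_{y}}(c_{0}(y)+o(1))\to\infty$ with $c_{0}>0$, while the standard sub-mean-value inequality for holomorphic sections yields $\Pi_{k}(y,y)=\sup_{\|s\|=1}|s(y)|_{h_{k}}^{2}\le Ck$ uniformly in $y$ and $k$. Hence $1\le\Pi_{k}(y,y)\le Ck$ for $k\gg0$, so $0\le\frac{1}{k}\log\Pi_{k}\le k^{-1}\log(Ck)\to0$ uniformly, and therefore $\frac{1}{k}\partial\bar\partial\log\Pi_{k}\rightharpoonup0$. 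For part (b) I would expand $s_{k}=\sum_{j=1}^{N_{k}}a_{j}\sigma_{j}$ in an $L^{2}$-orthonormal basis $\{\sigma_{j}\}$ of $H^{0}(Y;F\otimes L^{k})$, so that $a=(a_{j})$ is distributed by the rotation-invariant probability on the unit sphere $S^{2N_{k}-1}\subset\mathbb{C}^{N_{k}}$. Trivialising the line $(F\otimes L^{k})_{y}$ by a unit frame and writing $\sigma_{j}(y)=b_{j}(y)$ in it, one gets $u_{k}(y)=\log|\langle a,e_{k}(y)\rangle|^{2}$ with $e_{k}(y)=\overline{b(y)}/\|b(y)\|$ a unit vector and $\|b(y)\|^{2}=\Pi_{k}(y,y)$. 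By unitary invariance of $\mu_{k}$, for each fixed $y$ the variable $|\langle a,e_{k}(y)\rangle|^{2}$ has the $\mathrm{Beta}(1,N_{k}-1)$ law, independently of $y$; hence $\mathbb{E}_{\mu_{k}}|u_{k}(y)|=\mathbb{E}[-\log|\langle a,e_{k}(y)\rangle|^{2}]=O(\log N_{k})=O(\log k)$ uniformly in $y$ (with $N_{k}=O(k)$ by Riemann--Roch), and Fubini gives $\mathbb{E}_{\mu_{k}}\!\int_{Y}|u_{k}|\,dV=O(\log k)$. Dividing by $k$ already yields convergence to $0$ in $L^{1}(\Omega,\mu)$.

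To upgrade to almost sure convergence I would run the usual Borel--Cantelli argument with higher moments. The Beta tail bound $\mu_{k}\big(-\log|\langle a,e_{k}(y)\rangle|^{2}>c\log N_{k}+t\big)\le e^{-t}$ shows $-u_{k}(y)$ concentrates sub-exponentially around its mean $O(\log k)$, so $\|u_{k}(y)\|_{L^{p}(\mu_{k})}\le C(\log k+p)$ for all $p\ge1$, uniformly in $y$. Minkowski's integral inequality then gives $\big\|\frac{1}{k}\int_{Y}|u_{k}|\,dV\big\|_{L^{p}(\mu_{k})}\le C'(\log k+p)/k$; choosing $p=\lceil\log k\rceil$ and applying Markov's inequality makes $\mu\big(\frac{1}{k}\int_{Y}|u_{k}|\,dV>\varepsilon\big)$ summable in $k$ for each $\varepsilon>0$, whence by Borel--Cantelli $\frac{1}{k}\int_{Y}|u_{k}|\,dV\to0$ $\mu$-almost surely. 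In particular $\frac{1}{k}\int_{Y}u_{k}\,\partial\bar\partial\varphi\to0$ for all test functions $\varphi$ at once, $\mu$-almost surely; combined with (a) this proves the claim.

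As for difficulty: the probabilistic part is routine and the Poincar\'e--Lelong reduction is formal; the one place that genuinely uses the \emph{semipositivity} of $L$, and the step I expect to demand the most care, is the uniformity in $y$ of the two-sided bound $1\le\Pi_{k}(y,y)\le Ck$ in part (a). The upper bound is harmless, but the lower bound is delicate near the boundaries of the strata $Y_{j}$, where $r_{y}$ jumps and the model growth rate $k^{2/r_{y}}$ degenerates, so it must be read off from the uniform estimates underlying \prettyref{thm:Bergman kernel expansion} in \cite{Marinescu-Savale18} rather than from its pointwise form. Everything else is insensitive to the precise exponent $2/r_{y}$, since it lies in $(0,1]$.
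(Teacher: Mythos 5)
Your proposal is correct and takes a genuinely different route from the paper. The paper applies the Poincar\'e--Lelong identity at the level of expectations, $\mathbb{E}\left[Z_{s_{k}}\right]=\Phi_{k}^{*}\omega_{FS}$, invokes the semipositive Tian theorem (\prettyref{thm:Tian semi-positive}) to handle the deterministic mean $\tfrac{1}{k}\Phi_{k}^{*}\omega_{FS}\to\tfrac{i}{2\pi}R^{L}$, and then bounds the fluctuations via a second-moment estimate $\mathbb{E}\big[|Y^{\varphi}(s_{k})|^{2}\big]=O(k^{-2})$ imported from \cite[Thm.\ 5.3.3]{Ma-Marinescu} followed by Borel--Cantelli. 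You instead apply Poincar\'e--Lelong to each $s_{k}$ directly, split $\log|s_{k}|^{2}_{h_{k}}=\log\Pi_{k}(y,y)+u_{k}(y)$, kill the deterministic piece by the two-sided uniform bounds $c\le\Pi_{k}(y,y)\le Ck$, and kill the stochastic piece by reading $u_{k}(y)$ as $\log$ of a $\mathrm{Beta}(1,N_{k}-1)$ variable (unitary invariance), so that all $L^{p}$ moments are $O(\log k+p)$ uniformly in $y$, then Minkowski and a high-moment Borel--Cantelli. What each buys: yours is more self-contained on the probabilistic side (explicit law, no appeal to the Ma--Marinescu variance computation) and needs only the crude uniform upper/lower bounds on $\Pi_{k}(y,y)$ from \prettyref{lem: uniform estimate bergman kernel} (indeed any uniform polynomial lower bound suffices), whereas the paper's route is shorter but leans on the full Tian approximation theorem and hence on the uniform derivative estimates of \prettyref{lem: uniform estimate on derivative Bergman}. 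Two small remarks: (i) your upper bound $\Pi_{k}(y,y)\le Ck$ does follow from the elementary sub-mean-value argument on scale $k^{-1/2}$ since local potentials of a semipositive metric are psh, so it is not delicate; the only genuinely semipositive input is the uniform lower bound, supplied by \prettyref{lem: uniform estimate bergman kernel}. (ii) The exponent $p=\lceil\log k\rceil$ gives $\mu_{k}$-mass of order $k^{-\log 2}$, which is not summable; taking instead $p\sim 2\log k$ (or any $p=c\log k$ with $c>1/\log 2$) fixes this and the rest of the Borel--Cantelli step goes through unchanged.
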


Our final result concerns the asymptotics of holomorphic torsion. 
\begin{thm}
\label{thm:holomorphic torsion} Let $(L,h^{L})$ and $(F,h^{F})$
be Hermitian holomorphic line bundles on a compact Riemann surface
$Y$ and assume that $(L,h^{L})$ is semi-positive line bundle and
its curvature $R^{L}$ vanishes to finite order at any point. The
holomorphic torsion satisfies the asymptotics 
\[
\ln\mathcal{T}_{k}\coloneqq-\frac{1}{2}\zeta'_{k}\left(0\right)=-k\ln k\left[\frac{\tau^{L}}{8\pi}\right]-k\left[\frac{\tau^{L}}{8\pi}\ln\left(\frac{\tau^{L}}{2\pi}\right)\right]+o\left(k\right)
\]
as $k\rightarrow\infty$. 
\end{thm}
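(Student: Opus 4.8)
The plan is to compute the derivative at zero of the spectral zeta function
\[
\zeta_k(s)=\frac{1}{\Gamma(s)}\int_0^\infty t^{s-1}\operatorname{Tr}\!\bigl[e^{-t\Box_k^1}\bigr]\,dt
\]
of the Kodaira Laplacian $\Box_k^1$ on $(0,1)$--forms. This operator is invertible for $k\gg 0$ since $H^1(Y;F\otimes L^k)=0$, and by McKean--Singer it has the same nonzero spectrum as $\Box_k^0$, so the quantity in Theorem~\ref{thm:holomorphic torsion} is indeed $\ln\mathcal T_k=-\tfrac12\zeta_k'(0)$. With the short--time expansion $\operatorname{Tr}[e^{-t\Box_k^1}]=\tfrac{a_{-1}}{t}+a_0(k)+O(t)$, where $a_{-1}$ is a $k$--independent constant and $a_0(k)=O(k)$ carries a $k$--linear term from the curvature $R^{F\otimes L^k}=R^F+kR^L$, one has the standard identity
\[
\zeta_k'(0)=\int_0^1\frac{dt}{t}\Bigl(\operatorname{Tr}[e^{-t\Box_k^1}]-\tfrac{a_{-1}}{t}-a_0(k)\Bigr)+\int_1^\infty\frac{dt}{t}\operatorname{Tr}[e^{-t\Box_k^1}]+\gamma\,a_0(k)-a_{-1}
\]
with $\gamma$ the Euler constant, so the $k\ln k$ term can come only from the first, renormalized integral.

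The core step is the rescaling $t=u/k$, following the strategy of Bismut and Vasserot in the positive case. On the open locus $Y_2=\{R^L>0\}$, the near--diagonal rescaling of $\Box_k^1$ by $\sqrt k$ --- the mechanism behind Theorem~\ref{thm:Bergman kernel expansion} --- shows that the heat kernel of $\tfrac1k\Box_k^1$ on the diagonal at a point $y\in Y_2$ and time $u$ equals, to leading order in $k$, the quantity $\tfrac{k\tau^L(y)}{2\pi}\,\tfrac{e^{-u\tau^L(y)}}{1-e^{-u\tau^L(y)}}$, which is the heat trace density of the constant--curvature model magnetic Laplacian on $\mathbb C$ whose spectrum on $(0,1)$--forms consists of the Landau levels $\tau^L(y)(j+1)$, $j\ge 0$. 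Since $\tau^L$ vanishes on the degeneracy locus $Y_{\ge 3}=\{R^L=0\}$, which has measure zero (a non--negative function vanishing to finite order has a measure--zero zero set), one obtains, with sufficient uniformity in $u$,
\[
\operatorname{Tr}\!\bigl[e^{-u\Box_k^1/k}\bigr]=k\,\Theta(u)+o(k),\qquad
\Theta(u)=\frac{1}{2\pi}\int_Y\tau^L(y)\,\frac{e^{-u\tau^L(y)}}{1-e^{-u\tau^L(y)}}\,dV_Y(y),
\]
the integrand being read as $1/u$ where $\tau^L=0$.

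Substituting this into $\zeta_k(s)=k^{-s}\tfrac{1}{\Gamma(s)}\int_0^\infty u^{s-1}\operatorname{Tr}[e^{-u\Box_k^1/k}]\,du$ and using $\int_0^\infty u^{s-1}\tfrac{e^{-au}}{1-e^{-au}}\,du=\Gamma(s)\,a^{-s}\,\zeta_{\mathrm R}(s)$, with $\zeta_{\mathrm R}$ the Riemann zeta function, gives $\zeta_k(s)=k^{1-s}Z(s)+o(k)$ where
\[
Z(s)=\zeta_{\mathrm R}(s)\int_Y\frac{\bigl(\tau^L(y)\bigr)^{1-s}}{2\pi}\,dV_Y(y)
\]
is holomorphic near $s=0$ (using that $\tau^L$ vanishes only to finite order). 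Hence $\zeta_k'(0)=-k\ln k\,Z(0)+k\,Z'(0)+o(k)$, and inserting $\zeta_{\mathrm R}(0)=-\tfrac12$, $\zeta_{\mathrm R}'(0)=-\tfrac12\ln(2\pi)$ yields $Z(0)=-\tfrac{1}{4\pi}\int_Y\tau^L\,dV_Y$ and $Z'(0)=\tfrac{1}{4\pi}\int_Y\tau^L\ln\tfrac{\tau^L}{2\pi}\,dV_Y$. Multiplying by $-\tfrac12$ produces precisely the coefficients $-k\ln k\,[\tfrac{\tau^L}{8\pi}]-k\,[\tfrac{\tau^L}{8\pi}\ln\tfrac{\tau^L}{2\pi}]$ of Theorem~\ref{thm:holomorphic torsion}, where $[\phi]=\int_Y\phi\,dV_Y$.

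Two points then need justification, the second being the genuine obstacle. First, the interchange of $k\to\infty$ with the Mellin transform and analytic continuation: this follows from uniform--in--$k$ short--time heat trace bounds, which give $\operatorname{Tr}[e^{-u\Box_k^1/k}]-k\Theta(u)=O(1)+O(u)$ for $u$ near $0$ so that the renormalized integral converges after the subtraction of $a_{-1}$ and $a_0(k)$, together with the estimate $\int_1^\infty\tfrac{dt}{t}\operatorname{Tr}[e^{-t\Box_k^1}]=o(k)$, which holds because the spectral lower bound $\Box_k^1\ge c\,k^{2/r}$ of \cite{Marinescu-Savale18} forces superpolynomial decay of the heat trace for $t\ge 1$. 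Second, one must show that $Y_{\ge 3}$ contributes only $o(k)$ to $\zeta_k'(0)$; this replaces the uniform spectral gap which is available (after rescaling by $k$) in the positive case but lost here. The estimate covers a neighbourhood of $Y_{\ge 3}$ and uses that near a point with $r_y=j\ge 4$ one has $\Box_k^1\ge c\,k^{2/j}$ while, by the model analysis underlying Theorem~\ref{thm:Bergman kernel expansion}, the heat and Bergman kernels there are of size $O(k^{2/j})$ with $2/j\le\tfrac12$; together with the bound $O(k^{-1/j})$ (or better) for the area of the $k^{-1/j}$--tube around $Y_{\ge 3}$ and the remaining logarithmic factor $O(\ln k)$, the total contribution is $O(k^{1/2}\ln k)=o(k)$. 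Assembling the $Y_2$ main term with the negligible contributions of $Y_{\ge 3}$ and of large times completes the proof.
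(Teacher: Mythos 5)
Your proposal is correct and follows essentially the same route as the paper: rescale the heat time by $k$, identify the pointwise leading heat trace density with the Landau-model trace $\frac{1}{2\pi}\,\tau^L\,\frac{e^{-u\tau^L}}{1-e^{-u\tau^L}}$ (with the continuous extension $\frac{1}{2\pi u}$ on the degeneracy set), carry this through the Mellin transform to produce a Riemann zeta factor $\zeta_{\mathrm R}(s)$, and read off the $-k\ln k$ and $-k$ coefficients from $\zeta_{\mathrm R}(0)=-\tfrac12$ and $\zeta_{\mathrm R}'(0)=-\tfrac12\ln(2\pi)$. The numerical check agrees with the paper's computation of $\mathrm{tr}\,A_0=-\tfrac{\tau^L}{4\pi}$ and $\alpha(y)=\tfrac{\tau^L}{4\pi}\ln\tfrac{\tau^L}{2\pi}$.

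The organizational difference is that the paper does not estimate the contribution of $Y_{\ge 3}$ separately. It first proves a uniform (in $y$ and $k$) small-time expansion of the rescaled heat kernel $k^{-1}e^{-\frac{t}{2k}D_k^2}(y,y)$ (the Proposition before the Theorem), then works with the rescaled zeta function $\tilde\zeta_k(s)=k^{-1}k^s\zeta_k(s)$, splitting the Mellin integral at $T=k^{1-2/r}$ so the spectral gap \prettyref{eq:spectral gap Kodaira} kills the tail $t\ge T$, and passes to the limit in $y$ by dominated convergence using that $R_t(y)$ is bounded independently of $y$ for each fixed $t>0$. Because the extended integrand $R_t$ is continuous across $Y_{\ge 3}$ and the pure $\tfrac{1}{2\pi t}$ Weyl term has vanishing $\zeta$-regularized finite part, no separate tube estimate around $Y_{\ge 3}$ is needed. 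Your estimate of that contribution is therefore unnecessary once uniformity is in hand, and your bound ``$O(k^{1/2}\ln k)$'' is heuristic rather than a proof --- but since the cleaner dominated-convergence argument already covers it, this does not leave a genuine gap, only a detour. The substantive thing you gesture at but do not establish is precisely the uniform heat-kernel expansion lemma: your statement ``$\operatorname{Tr}[e^{-u\Box_k^1/k}]=k\Theta(u)+o(k)$ with sufficient uniformity'' is where the real work sits, and the paper supplies this via the $A_{k,j}$ expansion with the error bound $O(t^{N+1}k^{-1})$ uniform in $y$ and $k$.
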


All of our results above are well known in the case when the line
bundle $L$ is positive. In the positive case, the leading term of
the Bergman kernel expansion \prettyref{thm:Bergman kernel expansion}
was first shown in \cite{Tian90} and thereafter improved to a full
expansion in \cite{Catlin97-Bergmankernel,Zelditch98-Bergmankernel}
as a consequence of the Boutet de Monvel-Sjöstrand parametrix \cite{Boutet-Sjostrand76}
for the Szeg\H{o} kernel of a strongly pseudoconvex CR manifold. Subsequently
a different geometric method for the expansion was developed in \cite{Dai-Liu-Ma2006-Bergman-kernel,Ma-Marinescu}
inspired by the analytic localization method of \cite{Bismut-Lebeau91}.
The application of the Bergman kernel to induced Fubini-Study metrics
\prettyref{thm:Tian's approx theorem} is also found in \cite{Tian90}
in the positive case. The construction of the full Toeplitz algebra,
along with the properties of Toeplitz operators, was first done in
\cite{Bordemann-Meinrenken-Schlichenmaier94} as an application of
the the Boutet de Monvel-Guillemin calculus of Toeplitz operators
\cite{Boutet-Guillemin81}. The equidistribution result for random
sections in the positive case was first done in \cite{Shiffman-Zelditch99},
and \cite{DS06} gave also the speed of convergence of the zero-divisors.
Finally, the asymptotics of holomorphic torsion for positive line
bundles is due to Bismut-Vasserot \cite{Bismut-Vasserot}.

In the semi-positive case our results are new. The Bergman kernel
expansion \prettyref{thm:Bergman kernel expansion} was shown by the
authors in their earlier work \cite{Marinescu-Savale18}. The corresponding
problem for the Szeg\H{o} kernel of a weakly pseudoconvex CR manifold
in dimension three was solved by the second author in \cite{HsiaoSavale-2022}.
The expansion proved in \cite[Theorem 3]{Marinescu-Savale18} is however
only pointwise along the diagonal. In order to obtain the approximation
for Fubini-Study metrics \prettyref{thm:Tian's approx theorem} one
needs to prove uniform estimates on the Bergman kernel and its derivatives.
The composition for Toeplitz operators supported on the subset where
the curvature is positive in \prettyref{thm:Toeplitz quantization}
was shown earlier by the first author in \cite[Theorem 1.4]{Hsiao-Marinescu2017}
under the assumption of a small spectral gap for the Kodaira Laplacian.
A more general result, than the equidistribution for zeroes of a random
holomorphic section of a semipositive line bundle, was obtained in
\cite[S 4]{Dinh-Ma-Marinescu2016} using $L^{2}$ estimates for the
$\bar{\partial}$-equation of a modified positive metric.

The paper is organized as follows. In \prettyref{sec:sub-Riemannian-geometry}
we begin with some standard preliminaries. These include the relevant
spectral gap properties for the Bochner and Kodaira Laplacians in
subsections \prettyref{subsec:sR-and-Bochner} and \prettyref{subsec:Kodaira-Laplacian-and}
respectively. In\prettyref{subsec:Bergman-kernel exp.} we recall
the proof of the pointwise Bergman kernel expansion from \cite{Marinescu-Savale18}.
In \prettyref{subsec:Uniform-estimates-on} we further derive uniform
estimates on semipositive Bergman kernels that are necessary for the
applications in this article. In \prettyref{subsection:Tian} we use
the uniform Bergman kernel estimates to prove the semipositive version
of Tian's theorem \prettyref{thm:Tian's approx theorem}. In \prettyref{sec:Toeplitz-operators}
we prove the analogous expansion for the kernel of a Toeplitz operator
and the corresponding theorem \prettyref{thm:Toeplitz quantization}
on Toeplitz quantization. In \prettyref{subsec:Random-sections} we
prove the equidistribution result \prettyref{thm: random section-1}
for random sections. In the final \prettyref{sec:Holomorphic-torsion}
we prove the asymptotic result for holomorphic torsion \prettyref{thm:holomorphic torsion}.
The final appendix \prettyref{sec:Model-operators} describes facts
on model Laplacians and Bergman kernels that are used throughout the
article.

\section{\label{sec:sub-Riemannian-geometry} Preliminaries}

Here we begin with some preliminary notions. Let $Y$ be a compact
Riemann surface. It is equipped with an integrable complex structure
$J$ and Hermitian metric $h^{TY}$ on its complex tangent space.
Also denote by $g^{TY}$ the associated Riemannian metric on $TY$.
Next let $(L,h^{L})$, $(F,h^{F})$ be an auxiliary pair of Hermitian,
holomorphic bundles where $L$ is of rank one. We denote by $\nabla^{L}$,
$\nabla^{F}$ the corresponding Chern connections and $R^{L}$, $R^{F}$
their corresponding curvatures. The order of vanishing $r_{y}$ of
the curvature $R^{L}$ at a point $y\in Y$ is now defined as in \prettyref{eq:order vanishing curvature}.
And we assume that the curvature $R^{L}$ vanishes at finite order
at any point of $Y$ i.e. 
\begin{equation}
r\coloneqq\max_{y\in Y}r_{y}<\infty.\label{eq: curv vanishes finite order-1}
\end{equation}
The curvature $R^{L}$ of $\nabla^{L}$ is a $\left(1,1\right)$ form
which is further assumed to be semi-positive 
\begin{align}
iR^{L}\left(v,Jv\right) & \geq0,\quad\forall v\in TY\quad\textrm{ or equivalently}\nonumber \\
R^{L}\left(w,\bar{w}\right) & \geq0,\quad\forall w\in T^{1,0}Y.\label{eq:semi-positivity-1}
\end{align}
We note that semipositivity implies that the order of vanishing $r_{y}-2\in2\mathbb{N}_{0}$
of the curvature $R^{L}$ at any point $y$ is even. Semipositivity
and finite order of vanishing imply that there are points where the
curvature is positive (the set where the curvature is positive is
in fact an open dense set). Hence 
\[
\deg L=\int_{Y}c_{1}(L)=\int_{Y}\frac{i}{2\pi}R^{L}>0,
\]
so that $L$ is ample.

\subsection{\label{subsec:sR-and-Bochner} sR and Bochner Laplacians}

Associated to the above data one has the Bochner Laplacian on tensor
powers defined by 
\begin{equation}
\Delta_{k}\coloneqq\left(\nabla^{F\otimes L^{k}}\right)^{*}\nabla^{F\otimes L^{k}}:C^{\infty}\left(Y;F\otimes L^{k}\right)\rightarrow C^{\infty}\left(Y;F\otimes L^{k}\right),\label{eq:Bochner Laplacian}
\end{equation}
for each $k\in\mathbb{N}$, with the adjoint above being taken with
respect to the corresponding metrics and the Riemannian volume form. 

Each Bochner Laplacian \prettyref{eq:Bochner Laplacian} above is
the Fourier mode of a sub-Riemannian (sR) Laplacian on the unit circle
bundle of $L$. To elaborate, denote by $X=S^{1}L\rightarrow Y$ the
unit circle bundle of the line bundle $L$. Further let $E\coloneqq HX\subset TX$
be the horizontal distribution. The distribution carries the metric
$g^{E}=\pi^{*}g^{TY}$ pulled back from the base. We also denote by
the same notation the pullback of $\left(F,h^{F},\nabla^{F}\right)$
from $Y$ to $X$. The finite order of vanishing for the curvature
$R^{L}$ in \prettyref{eq: curv vanishes finite order} is equivalent
to the \textit{bracket generating }condition for the distribution
$E$: the Lie brackets in $C^{\infty}\left(E\right)$ generates all
vector fields $C^{\infty}\left(TX\right)$ \cite[Prop. 6]{Marinescu-Savale18}.
As such the triple $\left(X,E\subset TX,g^{E}\right)$ is a \textit{sub-Riemannian
(sR) manifold}. Furthermore the maximum order of vanishing for the
curvature $r$ \prettyref{eq: curv vanishes finite order} is then
the degree of non-holonomy of the distribution $E$, i.e. the number
of brackets required to generate the missing vertical direction. A
volume form on $X$ is defined via $\mu_{X}\coloneqq\mu_{g^{TY}}\wedge e^{*}$
with $\mu_{g^{TY}}$ denoting the Riemannian volume form on $Y$ and
$e^{*}$ being the dual one form to the generating $e\in C^{\infty}\left(TX\right)$
of the circle action on $X$. 

The subRiemannian Laplacian on $X$ 
\begin{align}
\Delta_{g^{E},\mu_{X}} & :C^{\infty}\left(X;F\right)\rightarrow C^{\infty}\left(X;F\right)\nonumber \\
\Delta_{g^{E},\mu_{X}} & \coloneqq\left(\nabla^{g^{E},F}\right)_{\mu_{X}}^{*}\circ\nabla^{g^{E},F}\label{eq:sR Laplacian}
\end{align}
being the composition of the sR gradient defined via
\begin{align}
\nabla^{g^{E},F} & :C^{\infty}\left(X;E\right)\rightarrow C^{\infty}\left(X;E\otimes F\right),\nonumber \\
h^{E,F}\left(\nabla^{g^{E},F}s,v\otimes s'\right) & \coloneqq h^{F}\left(\nabla_{v}^{F}s,s'\right),\label{eq:sR gradient}
\end{align}
$\forall v\in C^{\infty}\left(X;E\right),s'\in C^{\infty}\left(X;F\right),$
where $h^{E,F}\coloneqq g^{E}\otimes h^{F}$, with its adjoint taken
with respect to $\mu_{X}$. Under the bracket generating condition,
the sR Laplacian satisfies the sharp subelliptic estimate of Rothschild
and Stein with a gain of $\frac{1}{r}$ derivatives 
\begin{equation}
\left\Vert \psi s\right\Vert _{H^{1/r}}^{2}\leq C\left[\left\langle \Delta_{g^{E},F,\mu}\varphi s,\varphi s\right\rangle +\left\Vert \varphi s\right\Vert _{L^{2}}^{2}\right],\quad\forall s\in C^{\infty}\left(X;F\right)\label{eq:local subelliptic estimate}
\end{equation}
for all $\varphi,\psi\in C_{c}^{\infty}\left(X\right)$, with $\varphi=1$
on $\textrm{spt}\left(\psi\right)$, and where $r$ is again given
by \prettyref{eq: curv vanishes finite order} and corresponds to
the maximum step size of the distribution $E$. 

Next, the unit circle bundle of $L$ being $X$, the pullback $\mathbb{C}\cong\pi^{*}L\rightarrow X$
is canonically trivial via the identification $\pi^{*}L\ni\left(x,l\right)\mapsto x^{-1}l\in\mathbb{C}$.
Pulling back sections then gives the identification 
\begin{equation}
C^{\infty}\left(X;F\right)=\oplus_{k\in\mathbb{Z}}C^{\infty}\left(Y;F\otimes L^{k}\right).\label{eq:Fourier decomposition}
\end{equation}
Each summand on the right hand side above corresponds to an eigenspace
of $\nabla_{e}^{F}$ with eigenvalue $-ik$. While horizontal differentiation
$d^{H}$ on the left corresponds to differentiation with respect to
the tensor product connection$\nabla^{L^{k}}$ on the right hand side
above. Pick an invariant density $\mu_{X}$ on $X$ inducing a density
$\mu_{Y}$ on $Y$. This now defines the sR Laplacian $\Delta_{g^{E},F,\mu_{X}}$
acting on sections of $F$. By invariance the sR Laplacian commutes
$\left[\Delta_{g^{E},F,\mu_{X}},e\right]=0$ with the generator of
the circle action and hence preserves the decomposition \prettyref{eq:Fourier decomposition}.
It acts via 
\begin{equation}
\Delta_{g^{E},F,\mu_{X}}=\oplus_{k\in\mathbb{Z}}\Delta_{k}\label{eq: Fourier decomposition Laplacian}
\end{equation}
on each component where $\Delta_{k}$ is the Bochner Laplacian \prettyref{eq:Bochner Laplacian}
on the tensor powers $F\otimes L^{k}$, with adjoint being taken with
respect to $\mu_{g^{TY}}$. 

Using the description of the Bochner Laplacian as the Fourier mode
of the sR Laplacian \prettyref{eq: Fourier decomposition Laplacian},
in \cite[Thm. 1]{Marinescu-Savale18} a general leading asymptotic
result for the first positive eigenvalues was proved. Here we recall
a simple argument for its lower bound.
\begin{prop}
\label{prop: gen. spectral gap} There exist constants $c_{1},c_{2}>0$,
such that one has $\textrm{Spec}\left(\Delta_{k}\right)\subset\left[c_{1}k^{2/r}-c_{2},\infty\right)$
for each $k$. 
\end{prop}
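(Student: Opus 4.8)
The plan is to derive the estimate from the sharp subelliptic estimate \prettyref{eq:local subelliptic estimate} for the sub-Riemannian Laplacian $\Delta_{g^{E},F,\mu_{X}}$ on the circle bundle $X$, using that $\Delta_{k}$ is the $k$-th Fourier mode of $\Delta_{g^{E},F,\mu_{X}}$ via \prettyref{eq: Fourier decomposition Laplacian}. The mechanism is that a section lying in the $k$-th mode has vertical frequency $k$; since the $H^{1/r}$-norm on $X$ controls this frequency to the power $1/r$, it dominates $k^{2/r}$ times the $L^{2}$-norm, and the subelliptic estimate transfers this lower bound to the quadratic form of $\Delta_{k}$.

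First I would upgrade the local estimate \prettyref{eq:local subelliptic estimate} to a global one
\[
\left\Vert s\right\Vert _{H^{1/r}\left(X\right)}^{2}\leq C\left(\left\langle \Delta_{g^{E},F,\mu_{X}}s,s\right\rangle +\left\Vert s\right\Vert _{L^{2}\left(X\right)}^{2}\right),\qquad\forall s\in C^{\infty}\left(X;F\right),
\]
by covering the compact manifold $X$ with finitely many charts, choosing a subordinate partition of unity $\sum_{i}\varphi_{i}^{2}=1$, and summing \prettyref{eq:local subelliptic estimate} applied to each $\varphi_{i}s$. Here one uses the elementary facts that $\sum_{i}\left\Vert \varphi_{i}s\right\Vert _{H^{1/r}}^{2}\geq c\left\Vert s\right\Vert _{H^{1/r}}^{2}$ for some $c>0$ (write $s=\sum_{i}\varphi_{i}\left(\varphi_{i}s\right)$ and use boundedness of multiplication by $\varphi_{i}$ on $H^{1/r}$), and that $\left\langle \Delta_{g^{E},F,\mu_{X}}\left(\varphi_{i}s\right),\varphi_{i}s\right\rangle =\left\Vert \varphi_{i}\nabla^{g^{E},F}s+\left(d^{H}\varphi_{i}\right)\otimes s\right\Vert ^{2}\leq2\left\Vert \varphi_{i}\nabla^{g^{E},F}s\right\Vert ^{2}+C\left\Vert s\right\Vert ^{2}$, which sums to $2\left\langle \Delta_{g^{E},F,\mu_{X}}s,s\right\rangle +C\left\Vert s\right\Vert ^{2}$ since $\sum_{i}\varphi_{i}^{2}=1$.

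Next I would compare $\left\Vert \cdot\right\Vert _{H^{1/r}\left(X\right)}$ with the full elliptic Laplacian $\widetilde{\Delta}\coloneqq\Delta_{g^{E},F,\mu_{X}}+\left(\nabla_{e}^{F}\right)_{\mu_{X}}^{*}\nabla_{e}^{F}$ on $X$: being elliptic, self-adjoint and non-negative on the compact manifold $X$, it satisfies $\left\Vert s\right\Vert _{H^{1/r}\left(X\right)}^{2}\geq c'\left\langle \left(1+\widetilde{\Delta}\right)^{1/r}s,s\right\rangle $ for some $c'>0$. By $S^{1}$-invariance $\nabla_{e}^{F}$ commutes with $\Delta_{g^{E},F,\mu_{X}}$, so both operators preserve the decomposition \prettyref{eq:Fourier decomposition} and restrict on the $k$-th summand $C^{\infty}\left(Y;F\otimes L^{k}\right)$ to $\Delta_{k}$ and $k^{2}\,\mathrm{Id}$ respectively. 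Hence for $s=s_{k}$ in the $k$-th mode, the spectral theorem applied to $\Delta_{k}\geq0$ gives that $\left(1+\widetilde{\Delta}\right)^{1/r}$ acts as $\left(1+\Delta_{k}+k^{2}\right)^{1/r}\geq\left(1+k^{2}\right)^{1/r}\mathrm{Id}\geq k^{2/r}\,\mathrm{Id}$. Combining the three displays yields, for all $0\neq s_{k}\in C^{\infty}\left(Y;F\otimes L^{k}\right)$,
\[
c'k^{2/r}\left\Vert s_{k}\right\Vert ^{2}\leq\left\Vert s_{k}\right\Vert _{H^{1/r}\left(X\right)}^{2}\leq C\left(\left\langle \Delta_{k}s_{k},s_{k}\right\rangle +\left\Vert s_{k}\right\Vert ^{2}\right),
\]
so $\left\langle \Delta_{k}s_{k},s_{k}\right\rangle \geq\left(c_{1}k^{2/r}-c_{2}\right)\left\Vert s_{k}\right\Vert ^{2}$ with $c_{1}\coloneqq c'/C$ and $c_{2}\coloneqq1$; the min-max principle then gives $\mathrm{Spec}\left(\Delta_{k}\right)\subset\left[c_{1}k^{2/r}-c_{2},\infty\right)$.

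The only substantial input is the subelliptic estimate \prettyref{eq:local subelliptic estimate} itself (the Rothschild--Stein theorem), which is already in hand; the remaining steps are routine, and the one point requiring care is that the vertical frequency $k$ must be placed \emph{inside} the $1/r$-th power — this is exactly what produces the exponent $2/r$ rather than the ``elliptic'' exponent $2$.
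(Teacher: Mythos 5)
Your argument is correct and is essentially the paper's own proof: both globalize the Rothschild--Stein subelliptic estimate on the circle bundle $X$ and then extract $k^{2/r}$ from the fact that a section in the $k$-th Fourier mode has vertical frequency $k$. The paper expresses this more tersely by interposing $\|\partial_\theta^{1/r}s\|^2\leq\|s\|_{H^{1/r}(X)}^2$ and testing on the pullback of a $\Delta_k$-eigenfunction, whereas you make the same step precise via the auxiliary elliptic operator $\widetilde\Delta$ and the spectral theorem; the underlying mechanism is identical.
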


\begin{proof}
The subelliptic estimate \prettyref{eq:local subelliptic estimate}
on the circle bundle is 
\[
\left\Vert \partial_{\theta}^{1/r}s\right\Vert ^{2}\leq\left\Vert s\right\Vert _{H^{1/r}}^{2}\leq C\left[\left\langle \Delta_{g^{E},F,\mu_{X}}s,s\right\rangle +\left\Vert s\right\Vert _{L^{2}}^{2}\right],\;\forall s\in C^{\infty}\left(X;F\right).
\]
Letting $s=\pi^{*}s'$ be the pullback of an orthonormal eigenfunction
$s'$ of $\Delta_{k}$ with eigenvalue $\lambda$ on the base gives
$k^{2/r}\leq C\left(\lambda+1\right)$as required. 
\end{proof}

\subsection{\label{subsec:Kodaira-Laplacian-and} Kodaira Laplacian and its spectral
gap}

Related to the Bochner Laplacian \prettyref{eq:Bochner Laplacian}
is the Kodaira Laplacian on tensor powers. Namely, with $\left(\Omega^{0,*}\left(X;F\otimes L^{k}\right);\bar{\partial}_{k}\right)$
denoting the Dolbeault complex the Kodaira Laplace and Dirac operators
acting on $\Omega^{0,*}\left(X;F\otimes L^{k}\right)$ are defined
\begin{align}
\Box_{k}\coloneqq\frac{1}{2}\left(D_{k}\right)^{2} & =\bar{\partial}_{k}\bar{\partial}_{k}^{*}+\bar{\partial}_{k}^{*}\bar{\partial}_{k}\label{eq: Kodaira Laplace}\\
D_{k} & \coloneqq\sqrt{2}\left(\bar{\partial}_{k}+\bar{\partial}_{k}^{*}\right).\label{eq: Kodaira Dirac}
\end{align}

Clearly, $D_{k}$ interchanges while $\Box_{k}$ preserves $\Omega^{0,0/1}$.
We denote $D_{k}^{\pm}=\left.D_{k}\right|_{\Omega^{0,0/1}}$ and $\Box_{k}^{0/1}=\left.\Box_{k}\right|_{\Omega^{0,0/1}}$.
The Clifford multiplication endomorphism $c:TY\rightarrow\textrm{End}\left(\Lambda^{0,*}\right)$
is defined via $c\left(v\right)\coloneqq\sqrt{2}\left(v^{1,0}\wedge-i_{v^{0,1}}\right)$,
$v\in TY$, and extended to the entire exterior algebra $\Lambda^{*}TY$
via $c\left(1\right)=1,\,c\left(v_{1}\wedge v_{2}\right)\coloneqq c\left(v_{1}\right)c\left(v_{2}\right)$,
$v_{1},v_{2}\in TY$.

Denote by $\nabla^{TY},\nabla^{T^{1,0}Y}$ the Levi-Civita and Chern
connections on the real and holomorphic tangent spaces as well as
by $\nabla^{T^{0,1}Y}$ the induced connection on the anti-holomorphic
tangent space. Denote by $\Theta$ the real $\left(1,1\right)$ form
defined by contraction of the complex structure with the metric $\Theta\left(.,.\right)=g^{TY}\left(J.,.\right)$.
This is clearly closed $d\Theta=0$ (or $Y$ is Kähler) and the complex
structure is parallel $\nabla^{TY}J=0$ or $\nabla^{TY}=\nabla^{T^{1,0}Y}\oplus\nabla^{T^{1,0}Y}$.

With the induced tensor product connection on $\Lambda^{0,*}\otimes F\otimes L^{k}$
being denoted via $\nabla^{\Lambda^{0,*}\otimes F\otimes L^{k}}$,
the Kodaira Dirac operator \prettyref{eq: Kodaira Dirac} is now given
by the formula 
\[
D_{k}=c\circ\nabla^{\Lambda^{0,*}\otimes F\otimes L^{k}}.
\]

Next we denote by $R^{F}$ the curvature of $\nabla^{F}$ and by $\kappa$
the scalar curvature of $g^{TY}$. Define the following endomorphisms
of $\Lambda^{0,*}$ 
\begin{align}
\omega\left(R^{F}\right) & \coloneqq R^{F}\left(w,\bar{w}\right)\bar{w}i_{\bar{w}}\nonumber \\
\omega\left(R^{L}\right) & \coloneqq R^{L}\left(w,\bar{w}\right)\bar{w}i_{\bar{w}}\nonumber \\
\omega\left(\kappa\right) & \coloneqq\kappa\bar{w}i_{\bar{w}}\nonumber \\
\tau^{F} & \coloneqq R^{F}\left(w,\bar{w}\right)\nonumber \\
\tau^{L} & \coloneqq R^{L}\left(w,\bar{w}\right)\label{eq:formulas Clifford-1}
\end{align}
in terms of an orthonormal section $w$ of $T^{1,0}Y$. The Lichnerowicz
formula for the above Dirac operator (\cite{Ma-Marinescu} Thm 1.4.7)
simplifies for a Riemann surface and is given by 
\begin{align}
2\Box_{k} & =D_{k}^{2}=\left(\nabla^{\Lambda^{0,*}\otimes F\otimes L^{k}}\right)^{*}\nabla^{\Lambda^{0,*}\otimes F\otimes L^{k}}+k\left[2\omega\left(R^{L}\right)-\tau^{L}\right]+\left[2\omega\left(R^{F}\right)-\tau^{F}\right]+\frac{1}{2}\omega\left(\kappa\right).\label{eq:Lichnerowicz for.}
\end{align}

We now have the following. 
\begin{prop}
\label{prop:Dk spectral estimate} Let $Y$ be a compact Riemann surface,
$(L,h^{L})\to Y$ a semi-positive line bundle whose curvature $R^{L}$
vanishes to finite order at any point. Let $(F,h^{F})\to Y$ be a
Hermitian holomorphic vector bundle. Then there exist constants $c_{1},c_{2}>0$,
such that 
\[
\left\Vert D_{k}s\right\Vert ^{2}\geq\left(c_{1}k^{2/r}-c_{2}\right)\left\Vert s\right\Vert ^{2}
\]
for all $s\in\Omega^{0,1}\left(Y;F\otimes L^{k}\right)$. 
\end{prop}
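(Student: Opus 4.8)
The plan is to combine the Lichnerowicz formula \prettyref{eq:Lichnerowicz for.} with the spectral gap for the Bochner Laplacian from \prettyref{prop: gen. spectral gap}. First note that, restricted to $\Lambda^{0,1}$, the endomorphism $\bar{w}i_{\bar{w}}$ of $\Lambda^{0,*}$ equals the identity, so that $\omega\left(R^{L}\right)=\tau^{L}$ on $(0,1)$-forms and the curvature term $k\left[2\omega\left(R^{L}\right)-\tau^{L}\right]=k\tau^{L}$ appearing in \prettyref{eq:Lichnerowicz for.} acts on $\Omega^{0,1}\left(Y;F\otimes L^{k}\right)$ as multiplication by the function $k\tau^{L}=kR^{L}\left(w,\bar{w}\right)\geq0$, which is non-negative on account of semipositivity \prettyref{eq:semi-positivity-1}. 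This term may therefore simply be discarded in a lower bound. The remaining zeroth-order term $\left[2\omega\left(R^{F}\right)-\tau^{F}\right]+\frac{1}{2}\omega\left(\kappa\right)$ is a fixed, $k$-independent, self-adjoint endomorphism of $\Lambda^{0,1}\otimes F$ and hence bounded below by some $-c_{3}$ uniformly on $Y$.

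It thus remains to bound the connection Laplacian term from below. Writing $F'\coloneqq\Lambda^{0,1}Y\otimes F$ with its induced Hermitian metric and metric connection, the restriction of $\left(\nabla^{\Lambda^{0,*}\otimes F\otimes L^{k}}\right)^{*}\nabla^{\Lambda^{0,*}\otimes F\otimes L^{k}}$ to $\Omega^{0,1}\left(Y;F\otimes L^{k}\right)=C^{\infty}\left(Y;F'\otimes L^{k}\right)$ is precisely the Bochner Laplacian $\Delta_{k}$ on $F'\otimes L^{k}$ in the sense of \prettyref{eq:Bochner Laplacian}. The proof of \prettyref{prop: gen. spectral gap} uses only the subelliptic estimate \prettyref{eq:local subelliptic estimate}, which holds for the sR Laplacian of any Hermitian coefficient bundle with metric connection pulled back from $Y$; it therefore applies verbatim with $F$ replaced by $F'$, giving $\left\langle \left(\nabla^{\Lambda^{0,*}\otimes F\otimes L^{k}}\right)^{*}\nabla^{\Lambda^{0,*}\otimes F\otimes L^{k}}s,s\right\rangle \geq\left(c_{1}k^{2/r}-c_{2}\right)\left\Vert s\right\Vert ^{2}$ for all $s\in\Omega^{0,1}\left(Y;F\otimes L^{k}\right)$.

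Combining the three contributions through $\left\Vert D_{k}s\right\Vert ^{2}=\left\langle D_{k}^{2}s,s\right\rangle $ then yields $\left\Vert D_{k}s\right\Vert ^{2}\geq\left(c_{1}k^{2/r}-c_{2}-c_{3}\right)\left\Vert s\right\Vert ^{2}$, which is the assertion after renaming constants.

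The main point — and the only place that requires any thought — is the favorable sign of the curvature term on $(0,1)$-forms: this is what lets us avoid needing a positive pointwise lower bound on $\tau^{L}$, which fails precisely along the locus where the curvature degenerates, and to have the missing positivity there supplied instead by the $k^{2/r}$ subelliptic gain of the Bochner Laplacian. One should also check that \prettyref{prop: gen. spectral gap} genuinely transfers to the coefficient bundle $\Lambda^{0,1}Y\otimes F$, but since its proof invokes only the subelliptic estimate on the circle bundle, valid for arbitrary Hermitian coefficients, this is immediate.
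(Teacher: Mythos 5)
Your proof is correct and follows essentially the same route as the paper: apply the Lichnerowicz formula \prettyref{eq:Lichnerowicz for.}, observe that on $(0,1)$-forms the curvature term $k\left[2\omega(R^{L})-\tau^{L}\right]$ reduces to multiplication by $k\tau^{L}\geq 0$ (the paper's \prettyref{eq: semipositivity curvature estimate}) and so can be discarded, bound the remaining zeroth-order endomorphisms uniformly, and invoke Proposition \prettyref{prop: gen. spectral gap} for the Bochner Laplacian. Your explicit remark that the Bochner spectral gap is being applied with coefficient bundle $\Lambda^{0,1}Y\otimes F$ is a small clarification that the paper leaves implicit.
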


\begin{proof}
Writing $s=\left|s\right|\bar{w}\in\Omega^{0,1}\left(Y;F\otimes L^{k}\right)$
in terms of a local orthonormal section $\bar{w}$ gives 
\begin{equation}
\left\langle \left[2\omega\left(R^{L}\right)-\tau^{L}\right]s,s\right\rangle =R^{L}\left(w,\bar{w}\right)\left|s\right|^{2}\geq0\label{eq: semipositivity curvature estimate}
\end{equation}
from \prettyref{eq:semi-positivity-1}, \prettyref{eq:formulas Clifford-1}.
This gives 
\begin{align*}
\left\Vert D_{k}s\right\Vert ^{2} & =\left\langle D_{k}^{2}s,s\right\rangle \\
 & =\left\langle \left[\left(\nabla^{\Lambda^{0,*}\otimes F\otimes L^{k}}\right)^{*}\nabla^{\Lambda^{0,*}\otimes F\otimes L^{k}}+k\left[2\omega\left(R^{L}\right)-\tau^{L}\right]\right.\right.\\
 & \qquad\left.\left.+\left[2\omega\left(R^{F}\right)-\tau^{F}\right]+\frac{1}{2}\omega\left(\kappa\right)\right]s,s\right\rangle \\
 & \geq\left\langle \left(\nabla^{\Lambda^{0,*}\otimes F\otimes L^{k}}\right)^{*}\nabla^{\Lambda^{0,*}\otimes F\otimes L^{k}}s,s\right\rangle -c_{0}\left\Vert s\right\Vert ^{2}\\
 & \geq\left(c_{1}k^{2/r}-c_{2}\right)\left\Vert s\right\Vert ^{2}
\end{align*}
from Proposition \prettyref{prop: gen. spectral gap}, \prettyref{eq:Lichnerowicz for.}
and \prettyref{eq: semipositivity curvature estimate}. 
\end{proof}
We now derive as a corollary a spectral gap property for Kodaira Dirac/Laplace
operators $D_{k}$, $\Box_{k}$ corresponding to Proposition \prettyref{prop: gen. spectral gap}. 
\begin{cor}
\label{cor: spectral gap Dirac} Under the hypotheses of Proposition
\prettyref{prop:Dk spectral estimate} there exist constants $c_{1},c_{2}>0$,
such that $\textrm{Spec}\left(\Box_{k}\right)\subset\left\{ 0\right\} \cup\left[c_{1}k^{2/r}-c_{2},\infty\right)$
for each $k$. Moreover, $\ker D_{k}^{-}=0$ and $H^{1}\left(Y;F\otimes L^{k}\right)=0$
for $k$ sufficiently large. 
\end{cor}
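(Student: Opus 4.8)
The plan is to derive the corollary from the estimate $\|D_k s\|^2 \ge (c_1 k^{2/r}-c_2)\|s\|^2$ on $\Omega^{0,1}(Y;F\otimes L^k)$ of Proposition \ref{prop:Dk spectral estimate}, together with the supersymmetry of $D_k$ and Hodge theory. For each fixed $k$ the operator $D_k = c\circ\nabla^{\Lambda^{0,*}\otimes F\otimes L^k}$ is elliptic and self-adjoint, so $\Box_k=\tfrac12 D_k^2$ has discrete spectrum with finite-dimensional eigenspaces, and $\ker\Box_k^q\cong H^q(Y;F\otimes L^k)$; thus it is enough to locate the spectra of $\Box_k^0$ and $\Box_k^1$ separately.

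First I treat the degree-one part. For $s\in\Omega^{0,1}(Y;F\otimes L^k)$ one has $\|D_k s\|^2=\langle D_k^2 s,s\rangle=2\langle\Box_k^1 s,s\rangle$, so Proposition \ref{prop:Dk spectral estimate} gives at once $\mathrm{Spec}(\Box_k^1)\subset[\tfrac12(c_1 k^{2/r}-c_2),\infty)$. Hence, as soon as $k$ is large enough that $c_1 k^{2/r}-c_2>0$, the operator $\Box_k^1$ is positive and invertible, so $H^1(Y;F\otimes L^k)\cong\ker\Box_k^1=0$; and since $D_k s=0$ is equivalent to $\Box_k s=0$, the same bound yields $\ker D_k^-=0$ for such $k$.

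Next I transfer the gap to degree zero via supersymmetry. In the $\mathbb{Z}/2$-grading $\Omega^{0,0}\oplus\Omega^{0,1}$ the operator $D_k$ is off-diagonal with blocks $D_k^\pm$, which are mutually adjoint since $D_k$ is self-adjoint; consequently $2\Box_k^0=D_k^- D_k^+$, $2\Box_k^1=D_k^+ D_k^-$, and $D_k^+\Box_k^0=\Box_k^1 D_k^+$. Therefore $D_k^+$ carries the $\lambda$-eigenspace of $\Box_k^0$ into the $\lambda$-eigenspace of $\Box_k^1$, and this map is injective whenever $\lambda\ne 0$, because $D_k^+ s=0$ forces $\Box_k^0 s=\tfrac12 D_k^- D_k^+ s=0$. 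Hence every nonzero eigenvalue of $\Box_k^0$ is also an eigenvalue of $\Box_k^1$, so $\mathrm{Spec}(\Box_k^0)\setminus\{0\}\subset[\tfrac12(c_1 k^{2/r}-c_2),\infty)$. Combining the two degrees and relabelling the constants gives $\mathrm{Spec}(\Box_k)\subset\{0\}\cup[c_1 k^{2/r}-c_2,\infty)$, as claimed; the point $0$ genuinely occurs because $\ker\Box_k^0=H^0(Y;F\otimes L^k)\ne 0$ for $k$ large, since $L$ is ample.

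There is no real difficulty here: the analytic substance is entirely contained in Proposition \ref{prop:Dk spectral estimate}. The only thing to keep in mind is that the semipositivity bound controls the $(0,1)$-forms directly, while the behaviour on $(0,0)$-forms is recovered purely formally through the intertwining by $D_k^\pm$, with the holomorphic sections $H^0(Y;F\otimes L^k)$ being precisely the subspace left uncontrolled.
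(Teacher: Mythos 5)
Your argument is correct and is essentially the paper's own: the paper derives the spectral gap for $\Box_k^1$ directly from Proposition \ref{prop:Dk spectral estimate}, deduces $H^1=0$ and $\ker D_k^-=0$, and then invokes the McKean–Singer isomorphism of nonzero eigenspaces of $D_k^2|_{\Omega^{0,0}}$ and $D_k^2|_{\Omega^{0,1}}$ via $D_k$, which is exactly the supersymmetric intertwining by $D_k^\pm$ that you spell out. Your version merely makes explicit what the paper leaves as a one-line reference to McKean–Singer.
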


\begin{proof}
From Proposition \prettyref{prop:Dk spectral estimate}, it is clear
that 
\begin{equation}
\textrm{Spec}\left(\Box_{k}^{1}\right)\subset\left[c_{1}k^{2/r}-c_{2},\infty\right)\label{eq:spectral gap Kodaira}
\end{equation}
for some $c_{1},c_{2}>0$ giving the second part of the corollary.
Moreover, the eigenspaces of $\left.D_{k}^{2}\right|_{\Omega^{0,0/1}}$
with non-zero eigenvalue being isomorphic by Mckean-Singer, the first
part also follows. 
\end{proof}
Since $L$ is ample, we know also by the Kodaira-Serre vanishing theorem
that $H^{1}\left(Y;F\otimes L^{k}\right)$ vanishes for $k$ sufficiently
large. If $F$ is also a line bundle this follows from the well known
fact that for a line bundle $E$ on $Y$ we have $H^{1}\left(Y;E\right)=0$
whenever $\deg E>2g-2$. It is however interesting to have a direct
analytic proof. Of course, the vanishing theorem for a semi-positive
line bundle works only in dimension one, see Remark \prettyref{rem: good remark}
below.

The vanishing $H^{1}\left(Y;F\otimes L^{k}\right)=0$ for $k$ sufficiently
large gives 
\begin{align}
\textrm{dim }H^{0}\left(Y;F\otimes L^{k}\right) & =\chi\left(Y;F\otimes L^{k}\right)\nonumber \\
 & =\int_{Y}ch\left(F\otimes L^{k}\right)\textrm{Td}\left(Y\right)\nonumber \\
 & =k\left[\textrm{rk}\left(F\right)\int_{Y}c_{1}\left(L\right)\right]+\int_{Y}c_{1}\left(F\right)+1-g,\label{eq: hol. Euler characteristic}
\end{align}
by Riemann-Roch, with $\chi\left(Y;F\otimes L^{k}\right)$, $ch\left(F\otimes L^{k}\right)$,
$\textrm{Td}\left(Y\right)$, $g$ denoting the holomorphic Euler
characteristic, Chern character, Todd genus and genus of $Y$ respectively. 
\begin{rem}
\label{rem: good remark} The argument for Proposition \prettyref{prop:Dk spectral estimate}
breaks down in higher dimensions since there are more components to
$\left[2\omega\left(R^{L}\right)-\tau^{L}\right]$ in the Lichnerowicz
formula \prettyref{eq:Lichnerowicz for.} which semi-positivity is
insufficient to control. Indeed, there is a known counterexample to
the existence of a spectral gap for semi-positive line bundles in
higher dimensions due to Donnelly \cite{Donnelly2003}. 
\end{rem}

\section{\label{subsec:Bergman-kernel exp.} Bergman kernel expansion}

In this section we now first recall the expansion for the Bergman
kernel proved in \cite[Sec 4.1]{Marinescu-Savale18}. First recall
that the Bergman kernel is the Schwartz kernel $\Pi_{k}\left(y_{1},y_{2}\right)$
of the projector onto the nullspace of $\Box_{k}$ 
\begin{equation}
\Pi_{k}:C^{\infty}\left(Y;F\otimes L^{k}\right)\rightarrow\ker\left(\left.\Box_{k}\right|_{C^{\infty}\left(Y;F\otimes L^{k}\right)}\right),\label{eq: Bergman projector}
\end{equation}
with respect to the $L^{2}$ inner product given by the metrics $g^{TY}$,
$h^{F}$ and $h^{L}$. Alternately, if $s_{1},s_{2},\ldots,s_{N_{k}}$
denotes an orthonormal basis of eigensections of $H^{0}\left(X;F\otimes L^{k}\right)$
then 
\begin{equation}
\Pi_{k}\left(y_{1},y_{2}\right)=\sum_{j=1}^{N_{k}}s_{j}\left(y_{1}\right)\otimes s_{j}\left(y_{2}\right)^{*}.\label{eq: Bergman kernel}
\end{equation}
We wish to describe the asymptotics of $\Pi_{k}$ along the diagonal
in $Y\times Y$.

Consider $y\in Y$, and fix orthonormal bases $\left\{ e_{1},e_{2}\left(=Je_{1}\right)\right\} $,
$\left\{ l\right\} $, $\left\{ f_{j}\right\} _{j=1}^{\textrm{rk}\left(F\right)}$
for $T_{y}Y$, $L_{y}$ , $F$ respectively and let $\big\{ w\coloneqq\frac{1}{\sqrt{2}}\left(e_{1}-ie_{2}\right)\big\}$
be the corresponding orthonormal frame for $T_{y}^{1,0}Y$. Using
the exponential map from this basis obtain a geodesic coordinate system
on a geodesic ball $B_{2\varrho}\left(y\right)$. Further parallel
transport these bases along geodesic rays using the connections $\nabla^{T^{1,0}Y}$,
$\nabla^{L}$, $\nabla^{F}$ to obtain orthonormal frames for $T^{1,0}Y$,
$L$, $F$ on $B_{2\varrho}\left(y\right)$. In this frame and coordinate
system, the connection on the tensor product again has the expression
\begin{align}
\nabla^{\Lambda^{0,*}\otimes F\otimes L^{k}} & =d+a^{\Lambda^{0,*}}+a^{F}+ka^{L}\nonumber \\
a_{j}^{\Lambda^{0,*}} & =\int_{0}^{1}d\rho\left(\rho y^{k}R_{jk}^{\Lambda^{0,*}}\left(\rho x\right)\right)\nonumber \\
a_{j}^{F} & =\int_{0}^{1}d\rho\left(\rho y^{k}R_{jk}^{F}\left(\rho x\right)\right)\nonumber \\
a_{j}^{L} & =\int_{0}^{1}d\rho\left(\rho y^{k}R_{jk}^{L}\left(\rho x\right)\right)\label{eq:curvature formulas for connection}
\end{align}
in terms of the curvatures of the respective connections. We now define
a modified frame $\left\{ \tilde{e}_{1},\tilde{e}_{2}\right\} $ on
$\mathbb{R}^{2}$ which agrees with $\left\{ e_{1},e_{2}\right\} $
on $B_{\varrho}\left(y\right)$ and with $\left\{ \partial_{x_{1}},\partial_{x_{2}}\right\} $
outside $B_{2\varrho}\left(y\right)$. Also define the modified metric
$\tilde{g}^{TY}$ and almost complex structure $\tilde{J}$ on $\mathbb{R}^{2}$
to be standard in this frame and hence agreeing with $g^{TY}$, $J$
on $B_{\varrho}\left(y\right)$. The Christoffel symbol of the corresponding
modified induced connection on $\Lambda^{0,*}$now satisfies 
\[
\tilde{a}^{\Lambda^{0,*}}=0\quad\textrm{ outside }B_{2\varrho}\left(y\right).
\]

With $r_{y}-2\in2\mathbb{N}_{0}$ being the order of vanishing of
the curvature $R^{L}$ as before, we may Taylor expand the curvature
as
\begin{align}
R^{L} & =\underbrace{\sum_{\left|\alpha\right|=
r-2}R_{pq,\alpha}y^{\alpha}dy_{p}dy_{q}}_{=R_{0}^{L}}+
O\left(y^{r-1}\right)\quad\textrm{ with }\label{eq:curvature expansion}\\
iR_{0}^{L}\left(e_{1},e_{2}\right) & \geq0.\label{eq: first term semi-positive}
\end{align}
Further we may define the modified connections $\tilde{\nabla}^{F},$
$\tilde{\nabla}^{L}$ via 
\begin{align}
\tilde{\nabla}^{F} & =d+\chi\left(\frac{\left|y\right|}{2\varrho}\right)a^{F}\nonumber \\
\tilde{\nabla}^{L} & =d+\left[\underbrace{\int_{0}^{1}d\rho\,\rho y^{k}\left(\tilde{R}^{L}\right)_{jk}\left(\rho y\right)}_{=\tilde{a}_{j}^{L}}\right]dy_{j},\quad\textrm{ where}\nonumber \\
\tilde{R}^{L} & =\chi\left(\frac{\left|y\right|}{2\varrho}\right)R^{L}+\left[1-\chi\left(\frac{\left|y\right|}{2\varrho}\right)\right]R_{0}^{L}.\label{eq:modified connection-1}
\end{align}
as well as the corresponding tensor product connection $\tilde{\nabla}^{\Lambda^{0,*}\otimes F\otimes L^{k}}$
which agrees with $\nabla^{\Lambda^{0,*}\otimes F\otimes L^{k}}$
on $B_{\varrho}\left(y\right)$. Clearly the curvature of the modified
connection $\tilde{\nabla}^{L}$ is given by $\tilde{R}^{L}$\prettyref{eq:modified connection-1}
and is semi-positive by \prettyref{eq: first term semi-positive}.
Equation \prettyref{eq:modified connection-1} also gives $\tilde{R}^{L}=R_{0}^{L}+O\left(\varrho^{r_{y}-1}\right)$
and that the $\left(r_{y}-2\right)$-th derivative/jet of $\tilde{R}^{L}$
is non-vanishing at all points on $\mathbb{R}^{2}$ for 
\begin{equation}
0<\varrho<c\left|j^{r_{y}-2}R^{L}\left(y\right)\right|.\label{eq: curv =00003D000026 jet comp.}
\end{equation}
Here $c$ is a uniform constant depending on the $C^{r-2}$ norm of
$R^{L}$. We now define the modified Kodaira Dirac operator on $\mathbb{R}^{2}$
by the similar formula 
\begin{equation}
\tilde{D}_{k}=c\circ\tilde{\nabla}^{\Lambda^{0,*}\otimes F\otimes L^{k}},\label{eq: local Dirac}
\end{equation}
agreeing with $D_{k}$ on $B_{\varrho}\left(y\right).$ This has a
similar Lichnerowicz formula

\begin{align}
\tilde{D}_{k}^{2}=2\tilde{\Box}_{k}\coloneqq & \left(\tilde{\nabla}^{\Lambda^{0,*}\otimes F\otimes L^{k}}\right)^{*}\tilde{\nabla}^{\Lambda^{0,*}\otimes F\otimes L^{k}}+k\left[2\omega\left(\tilde{R}^{L}\right)-\tilde{\tau}^{L}\right]\label{eq:model Laplace}\\
\quad & +\left[2\omega\left(\tilde{R}^{F}\right)-\tilde{\tau}^{F}\right]+\frac{1}{2}\omega\left(\tilde{\kappa}\right)
\end{align}
the adjoint being taken with respect to the metric $\tilde{g}^{TY}$
and corresponding volume form. Also the endomorphisms $\tilde{R}^{F},\tilde{\tau}^{F},\tilde{\tau}^{L}$
and $\omega\left(\tilde{\kappa}\right)$ are the obvious modifications
of \prettyref{eq:formulas Clifford-1} defined using the curvatures
of $\tilde{\nabla}^{F},\tilde{\nabla}^{L}$ and $\tilde{g}^{TY}$
respectively. The above \prettyref{eq:model Laplace} again agrees
with $\Box_{k}$ on $B_{\varrho}\left(y\right)$ while the endomorphisms
$\tilde{R}^{F},\tilde{\tau}^{F},\omega\left(\tilde{\kappa}\right)$
all vanish outside $B_{\varrho}\left(y\right)$. Being semi-bounded
below \prettyref{eq:model Laplace} is essentially self-adjoint. A
similar argument as Corollary \prettyref{cor: spectral gap Dirac}
gives a spectral gap 
\begin{equation}
\textrm{Spec}\left(\tilde{\Box}_{k}\right)\subset\left\{ 0\right\} \cup\left[c_{1}k^{2/r_{y}}-c_{2},\infty\right).\label{eq: spectral gap}
\end{equation}
Thus for $k\gg0$, the resolvent $\left(\tilde{\Box}_{k}-z\right)^{-1}$
is well-defined in a neighborhood of the origin in the complex plane.
On account on the local elliptic estimate, the projector $\tilde{\Pi}_{k}$
from $L^{2}\left(\mathbb{R}^{2};\Lambda_{y}^{0,*}\otimes F_{y}\otimes L_{y}^{\otimes k}\right)$
onto $\ker\left(\tilde{\Box}_{k}\right)$ then has a smooth Schwartz
kernel with respect to the Riemannian volume of $\tilde{g}^{TY}$.

We are now ready to prove the Bergman kernel expansion \prettyref{thm:Bergman kernel expansion}. 
\begin{proof}[Proof of \prettyref{thm:Bergman kernel expansion}]
\label{bergman exp proof}First choose $\varphi\in\mathcal{S}\left(\mathbb{R}_{s}\right)$
even satisfying $\hat{\varphi}\in C_{c}\left(-\frac{\varrho}{2},\frac{\varrho}{2}\right)$
and $\varphi\left(0\right)=1$. For $c>0$, set $\varphi_{1}\left(s\right)=1_{\left[c,\infty\right)}\left(s\right)\varphi\left(s\right)$.
On account of the spectral gap Corollary \prettyref{cor: spectral gap Dirac},
and as $\varphi_{1}$ decays at infinity, we have 
\begin{align}
\varphi\left(D_{k}\right)-\Pi_{k} & =\varphi_{1}\left(D_{k}\right)\quad\textrm{ with }\nonumber \\
\left\Vert D_{k}^{a}\varphi_{1}\left(D_{k}\right)\right\Vert _{L^{2}\rightarrow L^{2}} & =O\left(k^{-\infty}\right)\label{eq: function approx. projector}
\end{align}
for $a\in\mathbb{N}$. Combining the above with semiclassical Sobolev
and elliptic estimates gives 
\begin{equation}
\left|\varphi\left(D_{k}\right)-\Pi_{k}\right|_{C^{l}\left(Y\times Y\right)}=O\left(k^{-\infty}\right),\label{eq:bergman vs schw.}
\end{equation}
$\forall l\in\mathbb{N}_{0}$. Next we may write $\varphi\left(D_{k}\right)=\frac{1}{2\pi}\int_{\mathbb{R}}e^{i\xi D_{k}}\hat{\varphi}\left(\xi\right)d\xi$
via Fourier inversion. Since $D_{k}=\tilde{D}_{k}$ on $B_{\varrho}\left(y\right)$
and $\hat{\varphi}\in C_{c}\left(-\frac{\varrho}{2},\frac{\varrho}{2}\right)$,
we may use a finite propagation argument to conclude 
\[
\varphi\left(D_{k}\right)\left(.,y\right)=\varphi\left(\tilde{D}_{k}\right)\left(.,0\right).
\]
By similar estimates as \prettyref{eq: function approx. projector}
for $\tilde{D}_{k}$ we now have a localization of the Bergman kernel
\begin{align}
\Pi_{k}\left(.,y\right) & =O\left(k^{-\infty}\right),\quad\textrm{ on }B_{\varrho}\left(y\right)^{c}\nonumber \\
\Pi_{k}\left(.,y\right)-\tilde{\Pi}_{k}\left(.,0\right) & =O\left(k^{-\infty}\right),\quad\textrm{ on }B_{\varrho}\left(y\right).\label{eq:Bergman localization}
\end{align}
It thus suffices to consider the Bergman kernel of the model Kodaira
Laplacian \prettyref{eq:model Laplace} on $\mathbb{R}^{2}$.

Next with the rescaling/dilation $\delta_{k^{-1/r}}y=\left(k^{-1/r}y_{1},\ldots,k^{-1/r}y_{n-1}\right)$,
the rescaled Kodaira Laplacian 
\begin{equation}
\boxdot\coloneqq k^{-2/r_{y}}\left(\delta_{k^{-1/r}}\right)_{*}\tilde{\Box}_{k}\label{eq:rescaled Dirac}
\end{equation}
satisfies 
\begin{align}
\varphi\left(\frac{\tilde{\Box}_{k}}{k^{2/r_{y}}}\right)\left(y,y'\right) & =k^{2/r_{y}}\varphi\left(\boxdot\right)\left(yk^{1/r_{y}},y'k^{1/r_{y}}\right)\label{eq: rescaling Schw kernel}
\end{align}
for $\varphi\in\mathcal{S}\left(\mathbb{R}\right)$. Using a Taylor
expansion via \prettyref{eq:modified connection-1}, \prettyref{eq: local Dirac}
the rescaled Dirac operator has an expansion 
\begin{eqnarray}
\boxdot & = & \left(\sum_{j=0}^{N}k^{-j/r_{y}}\boxdot_{j}\right)+k^{-2\left(N+1\right)/r_{y}}\mathrm{E}_{N+1},\;\forall N.\label{eq: Taylor expansion Dirac}
\end{eqnarray}
Here each 
\begin{align}
\boxdot_{j} & =a_{j;pq}\left(y\right)\partial_{y_{p}}\partial_{y_{q}}+b_{j;p}\left(y\right)\partial_{y_{p}}+c_{j}\left(y\right)\label{eq: operators in expansion}
\end{align}
is a ($k$-independent) self-adjoint, second-order differential operator
while each 
\begin{equation}
\mathrm{E}_{j}=\sum_{\left|\alpha\right|=N+1}y^{\alpha}\left[a_{j;pq}^{\alpha}\left(y;k\right)\partial_{y_{p}}\partial_{y_{q}}+b_{j;p}^{\alpha}\left(y;k\right)\partial_{y_{p}}+c_{j}^{\alpha}\left(y;k\right)\right]\label{eq: error operators}
\end{equation}
is a $k$-dependent self-adjoint, second-order differential operator
on $\mathbb{R}^{2}$ . Furthermore the functions appearing in \prettyref{eq: operators in expansion}
are polynomials with degrees satisfying 
\begin{align*}
\textrm{deg }a_{j}=j,\textrm{ deg }b_{j}\leq j+r_{y}-1, & \textrm{deg }c_{j}\leq j+2r_{y}-2\\
\textrm{ deg }b_{j}-\left(j-1\right)=\textrm{deg }c_{j}-j=0 & \;(\textrm{mod }2)
\end{align*}
and whose coefficients involve 
\begin{align*}
a_{j}: & \leq j-2\textrm{ derivatives of }R^{TY}\\
b_{j}: & \leq j-2\textrm{ derivatives of }R^{F},R^{\Lambda^{0,*}}\\
 & \leq j+r-2\textrm{ derivatives of }R^{L}\\
c_{j}: & \leq j-2\textrm{ derivatives of }R^{F},R^{\Lambda^{0,*}}\\
 & \leq j+r-2\textrm{ derivatives of }R^{L}
\end{align*}
while the coefficients $a_{j;pq}^{\alpha}\left(y;k\right),b_{j;p}^{\alpha}\left(y;k\right),c_{j}^{\alpha}\left(y;k\right)$
of \prettyref{eq: error operators} are uniformly (in $k$) $C^{\infty}$
bounded. Using \prettyref{eq:curvature formulas for connection},
\prettyref{eq:model Bochner in coordinates}, \prettyref{eq: model Kodaira Laplace}
and \prettyref{eq:model Lichnerowicz} the leading term of \prettyref{eq: Taylor expansion Dirac}
is computed 
\begin{equation}
\boxdot_{0}=\boxdot_{g^{TY},j_{y}^{r_{y}-2}R^{L},J^{TY}}\label{eq:leading term}
\end{equation}
in terms of the the model Kodaira Laplacian on the tangent space $TY$
\prettyref{eq: model Kodaira Laplace}.

It is now clear from \prettyref{eq:rescaled Dirac} that for $\varphi$
supported and equal to one near $0$. In light of the spectral gap
\prettyref{eq: spectral gap}, the equation \prettyref{eq: rescaling Schw kernel}
specializes to 
\begin{equation}
\tilde{\Pi}_{k}\left(y',y\right)=k^{2/r_{y}}\Pi^{\boxdot}\left(y'k^{1/r_{y}},yk^{1/r_{y}}\right)\label{eq:Bergman kernel relation}
\end{equation}
as a relation between the Bergman kernels of $\tilde{\Box}_{k}$,
$\boxdot$. Next, the expansion \prettyref{eq: Taylor expansion Dirac}
along with local elliptic estimates gives 
\[
\left(\boxdot-z\right)^{-1}-\left(\boxdot_{0}-z\right)^{-1}=O_{H_{\textrm{loc}}^{s}\rightarrow H_{\textrm{loc}}^{s+2}}\left(k^{-1/r_{y}}\left|\textrm{Im}z\right|^{-2}\right)
\]
for each $s\in\mathbb{R}$. More generally, we let $I_{j}\coloneqq\left\{ p=\left(p_{0},p_{1},\ldots\right)|p_{\alpha}\in\mathbb{N},\sum p_{\alpha}=j\right\} $denote
the set of partitions of the integer $j$ and define 
\begin{equation}
\mathtt{C}_{j}^{z}=\sum_{p\in I_{j}}\left(z-\boxdot_{0}\right)^{-1}\left[\Pi_{\alpha}\left[\boxdot_{p_{\alpha}}\left(z-\boxdot_{0}\right)^{-1}\right]\right].\label{eq: jth term kernel expansion}
\end{equation}
Then by repeated applications of the local elliptic estimate using
\prettyref{eq: Taylor expansion Dirac} we have 
\begin{equation}
\left(z-\mathrm{\boxdot}\right)^{-1}-\left(\sum_{j=0}^{N}k^{-j/r_{y}}\mathtt{C}_{j}^{z}\right)=O_{H_{\textrm{loc}}^{s}\rightarrow H_{\textrm{loc}}^{s+2}}\left(k^{-\left(N+1\right)/r_{y}}\left|\textrm{Im}z\right|^{-2Nr_{y}-2}\right),\label{eq: resolvent expansion}
\end{equation}
for each $N\in\mathbb{N},\,s\in\mathbb{R}$. A similar expansion as
\prettyref{eq: Taylor expansion Dirac} for the operator $\left(\boxdot+1\right)^{M}\left(\boxdot-z\right)$,
$M\in\mathbb{N}$, also gives 
\begin{equation}
\left(\boxdot+1\right)^{-M}\left(\boxdot-z\right)^{-1}-\sum_{j=0}^{N}k^{-j/r_{y}}\mathtt{C}_{j,M}^{z}=O_{H_{\textrm{loc}}^{s}\rightarrow H_{\textrm{loc}}^{s+2+2M}}\left(k^{-\left(N+1\right)/r_{y}}\left|\textrm{Im}z\right|^{-2Nr_{y}-2}\right)\label{eq: regularized expansion-1-1}
\end{equation}
for operators $\mathtt{C}_{j,M}^{z}=O_{H_{\textrm{loc}}^{s}\rightarrow H_{\textrm{loc}}^{s+2+2M}}\left(k^{-\left(N+1\right)/r_{y}}\left|\textrm{Im}z\right|^{-2Nr_{y}-2}\right)$,
$j=0,\ldots,N$, with 
\[
\mathtt{C}_{0,M}^{z}=\left(\hat{\Delta}_{g^{E},F,\mu}^{\left(0\right)}+1\right)^{-M}\left(\hat{\Delta}_{g^{E},F,\mu}^{\left(0\right)}-z\right)^{-1}.
\]
For $M\gg0$ sufficiently large, Sobolev's inequality gives an expansion
for the corresponding Schwartz kernels in \prettyref{eq: regularized expansion-1-1}
in $C^{l}\left(\mathbb{R}^{2}\times\mathbb{R}^{2}\right)$, $\forall l\in\mathbb{N}_{0}$.
Next, plugging the above resolvent expansion into the Helffer-Sjöstrand
formula as before gives 
\[
\left|\varphi\left(\boxdot\right)-\sum_{j=0}^{N}k^{-j/r_{y}}\mathtt{C}_{j}^{\varphi}\right|_{C^{l}\left(\mathbb{R}^{2}\times\mathbb{R}^{2}\right)}=O\left(k^{-\left(N+1\right)/r_{y}}\right)
\]
$\forall l,N\in\mathbb{N}_{0}$ and for some ($k$-independent) $\mathtt{C}_{j}^{\varphi}\in C^{\infty}\left(\mathbb{R}^{2}\times\mathbb{R}^{2}\right)$,
$j=0,1,\ldots$, with leading term $\mathtt{C}_{0}^{\varphi}=\varphi\left(\boxdot_{0}\right)=\varphi\left(\boxdot_{g^{TY},j_{y}^{r_{y}-2}R^{L},J^{TY}}\right).$
As $\varphi$ was chosen supported near $0$, the spectral gap properties
\prettyref{eq: spectral gap}, \prettyref{prop:model spectra} give
\begin{equation}
\left|\Pi^{\boxdot}-\sum_{j=0}^{N}k^{-j/r_{y}}\mathtt{C}_{j}\right|_{C^{l}\left(\mathbb{R}^{2}\times\mathbb{R}^{2}\right)}=O\left(k^{-\left(N+1\right)/r_{y}}\right)\label{eq: model Bergman expansion}
\end{equation}
for some $\mathtt{C}_{j}\in C^{\infty}\left(\mathbb{R}^{2}\times\mathbb{R}^{2}\right)$,
$j=0,1,\ldots$, with leading term $\mathtt{C}_{0}=\Pi^{\boxdot_{g^{TY},j_{y}^{r_{y}-2}R^{L},J^{TY}}}.$
The expansion is now a consequence of \prettyref{eq:bergman vs schw.},
\prettyref{eq:Bergman localization} and \prettyref{eq:Bergman kernel relation}.
Finally, in order to show that there are no odd powers of $k^{-j/r_{y}}$,
one again notes that the operators $\boxdot_{j}$ \prettyref{eq: operators in expansion}
change sign by $\left(-1\right)^{j}$ under $\delta_{-1}x\coloneqq-x$.
Thus the integral expression \prettyref{eq: jth term kernel expansion}
corresponding to $\mathtt{C}_{j}^{z}\left(0,0\right)$ changes sign
by $\left(-1\right)^{j}$ under this change of variables and must
vanish for $j$ odd. 
\end{proof}
Next we show that a pointwise expansion on the diagonal also exists
for derivatives of the Bergman kernel. In what follows we denote by
$j^{l}s/j^{l-1}s\in S^{l}T^{*}Y\otimes E$ the component of the $l$-th
jet of a section $s\in C^{\infty}\left(E\right)$ of a Hermitian vector
bundle $E$ that lies in the kernel of the natural surjection $J^{l}\left(E\right)\rightarrow J^{l-1}\left(E\right)$. 
\begin{thm}
\label{thm: expansion derivatives Bergman}For each $l\in\mathbb{N}_{0}$,
the $l$-th jet of the on-diagonal Bergman kernel has a pointwise
expansion 
\begin{equation}
j^{l}\left[\Pi_{k}\left(y,y\right)\right]/j^{l-1}\left[\Pi_{k}\left(y,y\right)\right]=k^{\left(2+l\right)/r_{y}}\left[\sum_{j=0}^{N}c_{j}\left(y\right)k^{-2j/r_{y}}\right]+O\left(k^{-\left(2N-l-1\right)/r_{y}}\right),\label{eq: derivative expansion}
\end{equation}
$\forall N\in\mathbb{N}$, in $j^{l}\textrm{End}\left(F\right)/j^{l-1}\textrm{End}\left(F\right)=S^{l}T^{*}Y\otimes\textrm{End}\left(F\right)$,
with the leading term 
\[
c_{0}\left(y\right)=j^{l}\left[\Pi^{g_{y}^{TY},j_{y}^{r_{y}-2}R^{L},J_{y}^{TY}}\left(0,0\right)\right]/j^{l-1}\left[\Pi^{g_{y}^{TY},j_{y}^{r_{y}-2}R^{L},J_{y}^{TY}}\left(0,0\right)\right]
\]
being given in terms of the $l$-th jet of the Bergman kernel of the
Kodaira Laplacian \prettyref{eq: model Kodaira Laplace} on the tangent
space at $y$. 
\end{thm}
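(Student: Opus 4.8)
The plan is to rerun the proof of \prettyref{thm:Bergman kernel expansion}, carrying $l$ coordinate derivatives through every step; the only genuinely new point is that both arguments of the Schwartz kernel must be kept free and all modified data must be anchored at a \emph{single} base point, so that no derivative ever falls on the family of model operators. Fix $y_{0}\in Y$. As in the proof of \prettyref{thm:Bergman kernel expansion}, introduce geodesic normal coordinates and parallel frames centred at $y_{0}$ together with the modified operators $\tilde{D}_{k},\tilde{\Box}_{k}$ built from the data at $y_{0}$; with this choice the rescaled operator $\boxdot$ of \prettyref{eq:rescaled Dirac} and its leading term $\boxdot_{0}=\boxdot_{g_{y_{0}}^{TY},j_{y_{0}}^{r_{y_{0}}-2}R^{L},J_{y_{0}}^{TY}}$ are fixed operators on $\mathbb{R}^{2}$. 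Since $j^{l}[\,\cdot\,]/j^{l-1}[\,\cdot\,]\in S^{l}T^{*}Y\otimes\textrm{End}(F)$ is the top-order part of the jet, it is represented at $y_{0}$ by the array of coordinate derivatives $\big(\partial_{y}^{\alpha}[\Pi_{k}(y,y)]\big|_{y_{0}}\big)_{|\alpha|=l}$, and likewise for the model kernel; so it suffices to expand each $\partial_{y}^{\alpha}[\Pi_{k}(y,y)]\big|_{y_{0}}$.

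First I would upgrade \prettyref{eq:Bergman localization} to a two-variable localization: for $y_{1},y_{2}\in B_{\varrho/2}(y_{0})$,
\[
\big|\Pi_{k}(y_{1},y_{2})-\tilde{\Pi}_{k}(y_{1},y_{2})\big|_{C^{l}}=O(k^{-\infty}),\qquad\forall l\in\mathbb{N}_{0}.
\]
This follows from \prettyref{eq:bergman vs schw.}, its analogue for $\tilde{D}_{k}$, and finite propagation: since $\widehat{\varphi}$ is supported in $(-\varrho/2,\varrho/2)$, the section $\varphi(D_{k})\delta_{y_{2}}$ is supported in $B_{\varrho/2}(y_{2})\subset B_{\varrho}(y_{0})$ and depends only on $D_{k}$ there, where $D_{k}=\tilde{D}_{k}$; hence $\varphi(D_{k})(y_{1},y_{2})=\varphi(\tilde{D}_{k})(y_{1},y_{2})$ for all $y_{1}$ and all $y_{2}\in B_{\varrho/2}(y_{0})$, and comparison with \prettyref{eq:bergman vs schw.} gives the claim after differentiating in both variables. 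Because $O(k^{-\infty})$ errors remain $O(k^{-\infty})$ after any fixed number of $y$-derivatives, one is thereby reduced to $\tilde{\Pi}_{k}$.

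Next I would differentiate the rescaling \prettyref{eq:Bergman kernel relation}, giving
\[
\partial_{y_{1}}^{\beta}\partial_{y_{2}}^{\gamma}\tilde{\Pi}_{k}(y_{1},y_{2})=k^{(2+|\beta|+|\gamma|)/r_{y_{0}}}\big(\partial^{\beta}\partial^{\gamma}\Pi^{\boxdot}\big)\big(k^{1/r_{y_{0}}}y_{1},k^{1/r_{y_{0}}}y_{2}\big),
\]
so that at $y_{1}=y_{2}=0$ the prefactor is exactly $k^{(2+l)/r_{y_{0}}}$ when $|\beta|+|\gamma|=l$. Feeding in the model Bergman expansion \prettyref{eq: model Bergman expansion} in $C^{l}(\mathbb{R}^{2}\times\mathbb{R}^{2})$ and applying the Leibniz rule $\partial_{y}^{\alpha}[\Pi_{k}(y,y)]\big|_{y_{0}}=\sum_{\beta+\gamma=\alpha}\binom{\alpha}{\beta}\,\partial_{y_{1}}^{\beta}\partial_{y_{2}}^{\gamma}\Pi_{k}(y_{0},y_{0})$ yields
\[
\partial_{y}^{\alpha}[\Pi_{k}(y,y)]\big|_{y_{0}}=k^{(2+l)/r_{y_{0}}}\sum_{m=0}^{2N}k^{-m/r_{y_{0}}}\,\partial_{z}^{\alpha}\big[\mathtt{C}_{m}(z,z)\big]\big|_{z=0}+O\!\left(k^{-(2N-l-1)/r_{y_{0}}}\right),
\]
with leading coefficient $\partial_{z}^{\alpha}\big[\mathtt{C}_{0}(z,z)\big]\big|_{z=0}=\partial_{z}^{\alpha}\big[\Pi^{\boxdot_{0}}(z,z)\big]\big|_{z=0}$, i.e.\ the $l$-th jet of the model Bergman kernel \prettyref{eq: model Kodaira Laplace}, as claimed. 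Finally the parity argument of \prettyref{thm:Bergman kernel expansion} carries over: under $\delta_{-1}\colon y\mapsto-y$ one has $\boxdot_{j}\mapsto(-1)^{j}\boxdot_{j}$, hence $\mathtt{C}_{m}(-y',-y)=(-1)^{m}\mathtt{C}_{m}(y',y)$, so $z\mapsto\mathtt{C}_{m}(z,z)$ has parity $(-1)^{m}$ and its $l$-th derivatives at $0$ vanish unless $m\equiv l\ (\mathrm{mod}\ 2)$; the surviving terms then step by $2$ in $m$, producing the series in powers of $k^{-2/r_{y_{0}}}$ of the statement.

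I expect the obstacles to be organizational rather than analytic: one must check that the $j^{l}/j^{l-1}$ component is genuinely coordinate-free and is correctly computed in the geodesic chart where $\boxdot$ and $\Pi^{\boxdot}$ live, verify that the $C^{l}$ control already present in \prettyref{eq:bergman vs schw.} and \prettyref{eq: model Bergman expansion} is uniform enough in the two kernel arguments to survive the Leibniz expansion, and bookkeep the powers of $k$ through the chain rule and the parity cancellations. No analytic input beyond the pointwise expansion \prettyref{thm:Bergman kernel expansion} and the spectral gap is needed.
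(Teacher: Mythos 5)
Your proposal follows essentially the same route as the paper's own proof: localize in $C^{l}$ near the base point, differentiate the rescaling relation \prettyref{eq:Bergman kernel relation} on the diagonal, and feed in the $C^{l}$ version of the model expansion \prettyref{eq: model Bergman expansion}, with the parity argument of the preceding proof carried along. Your explicit Leibniz bookkeeping, the two-variable restatement of the localization, and the remark that all modified data must be anchored at one base point merely spell out what the paper's three-sentence proof leaves implicit.
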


\begin{proof}
The proof is a modification of the previous. First note that a similar
localization 
\begin{equation}
\Pi_{k}\left(\mathsf{y},\mathsf{y}\right)-\tilde{\Pi}_{k}\left(\mathsf{y},\mathsf{y}\right)=O\left(k^{-\infty}\right),\label{eq:localization in nbhd}
\end{equation}
to \prettyref{eq:Bergman localization} is valid in $C^{l}$, $\forall l\in\mathbb{N}_{0}$,
and for $\mathsf{y}$ in a uniform neighborhood of $y$. Next differentiating
\prettyref{eq:Bergman kernel relation} with $y=y'$ gives 
\begin{equation}
\partial_{\mathsf{y}}^{\alpha}\tilde{\Pi}_{k}\left(\mathsf{y},\mathsf{y}\right)=k^{\left(2+\left|\alpha\right|\right)/r_{y}}\partial_{\mathsf{y}}^{\alpha}\Pi^{\boxdot}\left(\mathsf{y}k^{1/r_{y}},\mathsf{y}k^{1/r_{y}}\right),\label{eq: rescaling derivative of Berg.}
\end{equation}
$\forall\alpha\in\mathbb{N}_{0}^{2}$. Finally, the expansion \prettyref{eq: model Bergman expansion}
being valid in $C^{l}$, $\forall l\in\mathbb{N}_{0}$, maybe differentiated
and plugged into the above with $\mathsf{y}=0$ to give the theorem. 
\end{proof}
\begin{rem}
\label{rem: Recovering positive case}The expansion \prettyref{eq:Bergmankernelexpansion}
is the same as the positive case on $Y_{2}$ (points where $r_{y}=2$)
and furthermore uniform in any $C^{l}$-topology on compact subsets
of $Y_{2}$ cf.\ \cite[Theorem 4.1.1]{Ma-Marinescu}. In particular
the first two coefficients for $y\in Y_{2}$ are given by 
\begin{align*}
c_{0}\left(y\right) & =\Pi^{g_{y}^{TY},j_{y}^{0}R^{L},J_{y}^{TY}}\left(0,0\right)=\frac{1}{2\pi}\tau^{L}\\
c_{1}\left(y\right) & =\frac{1}{16\pi}\tau^{L}\left[\kappa-\Delta\ln\tau^{L}+4\tau^{F}\right].
\end{align*}
The derivative expansion on $Y_{2}$ is also known to satisfy $c_{0}=c_{1}=\ldots=c_{\left[\frac{l-1}{2}\right]}=0$
(i.e. begins at the same leading order $k$) with the leading term
given by 
\[
c_{\left[\frac{l+1}{2}\right]}\left(y\right)=\frac{1}{2\pi}j^{l}\tau^{L}/\frac{1}{2\pi}j^{l-1}\tau^{L}.
\]
\end{rem}

\subsection{\label{subsec:Uniform-estimates-on} Uniform estimates on the Bergman
kernel }

The expansions for the Bergman kernel \prettyref{thm:Bergman kernel expansion}
and its derivatives \prettyref{thm: expansion derivatives Bergman}
are not uniform in the point on the diagonal. For applications in
the later sections we need to give uniform estimates on the Bergman
kernel. Below we set $C_{r_{1}}\coloneqq\inf_{\left|R^{V}\right|=1}\Pi^{g^{V},R^{V},J^{V}}\left(0,0\right)$
for each $0\neq R^{V}\in S^{r_{1}-2}V^{*}\otimes\Lambda^{2}V^{*}$,
$r_{1}\geq2$. Furthermore, the Bergman kernel $\Pi^{g_{y}^{TY},j_{y}^{0}R^{L},J_{y}^{TY}}\left(0,0\right)$
of the model operator \prettyref{eq: model Kodaira Laplace} is extended
(continuously) by zero from $Y_{2}$ to $Y$. 
\begin{lem}
\label{lem: uniform estimate bergman kernel} The Bergman kernel satisfies
\begin{equation}
\left[\inf_{y\in Y_{r}}\Pi^{g_{y}^{TY},j_{y}^{r-2}R^{L},J_{y}^{TY}}\left(0,0\right)\right]\left[1+o\left(1\right)\right]k^{2/r}\leq\Pi_{k}\left(y,y\right)\leq\left[\sup_{y\in Y}\Pi^{g_{y}^{TY},j_{y}^{0}R^{L},J_{y}^{TY}}\left(0,0\right)\right]\left[1+o\left(1\right)\right]k,\label{eq: uniform est. Bergman}
\end{equation}
with the $o\left(1\right)$ terms being uniform in $y\in Y$. 
\end{lem}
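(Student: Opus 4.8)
The plan is to prove the two inequalities separately, each time reducing — via the finite-propagation localization used in the proof of \prettyref{thm:Bergman kernel expansion} — to model Bergman kernels on tangent spaces; the only ingredient beyond \prettyref{eq:Bergmankernelexpansion} itself is uniformity in $y$, for which compactness of the strata $Y_{j}$ is the decisive tool.

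For the upper bound, note that on $Y_{2}=\{\tau^{L}>0\}$ one has $\Pi^{g_{y}^{TY},j_{y}^{0}R^{L},J_{y}^{TY}}(0,0)=\frac{1}{2\pi}\tau^{L}(y)$, extended by zero on $Y\setminus Y_{2}$, so the right-hand supremum in \prettyref{eq: uniform est. Bergman} is $\frac{1}{2\pi}\sup_{Y}\tau^{L}$. Fix $\varepsilon>0$ and split $Y=\{\tau^{L}\geq\varepsilon\}\cup\{\tau^{L}<\varepsilon\}$. On the compact subset $\{\tau^{L}\geq\varepsilon\}$ of $Y_{2}$ the expansion \prettyref{eq:Bergmankernelexpansion} holds uniformly in $C^{0}$ by \prettyref{rem: Recovering positive case}, giving $\Pi_{k}(y,y)=\frac{1}{2\pi}\tau^{L}(y)k+O_{\varepsilon}(1)\leq\frac{1}{2\pi}(\sup_{Y}\tau^{L})\,k\,(1+o(1))$ there. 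On $\{\tau^{L}<\varepsilon\}$ I would use the elementary domination $\Pi_{k}(y,y)\leq e^{-t\Box_{k}^{0}}(y,y)$, valid for every $t>0$ since the right side is a sum over an $L^{2}$-eigenbasis of $\Box_{k}^{0}$ of non-negative terms whose $0$-eigenvalue part is exactly $\Pi_{k}(y,y)$. Combined with the Lichnerowicz formula \prettyref{eq:Lichnerowicz for.}, which on $\Omega^{0,0}$ reads $\Box_{k}^{0}=\frac{1}{2}\Delta_{k}-\frac{k}{2}\tau^{L}-\frac{1}{2}\tau^{F}$, and Kato's inequality (dominating $e^{-t\Box_{k}^{0}}(y,y)$ by the scalar heat kernel $e^{-t(\frac{1}{2}\Delta-\frac{k}{2}\tau^{L}-\frac{1}{2}\tau^{F})}(y,y)$), a Feynman--Kac estimate with $t=\alpha/k$ yields $\Pi_{k}(y,y)\leq\frac{k}{2\pi\alpha}e^{\frac{\alpha}{2}\tau^{L}(y)}(1+o(1))$ uniformly in $y$. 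Choosing $\alpha$ large and then $\varepsilon$ small, both depending only on $\sup_{Y}\tau^{L}$, this is $\leq\frac{1}{2}\cdot\frac{1}{2\pi}(\sup_{Y}\tau^{L})\,k\,(1+o(1))$ on $\{\tau^{L}<\varepsilon\}$, and together with the first region one obtains the upper bound with a uniform $o(1)$.

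For the lower bound, $Y_{r}=Y\setminus Y_{\leq r-2}$ is closed, hence compact, and $y\mapsto|j_{y}^{r-2}R^{L}|$ is continuous and bounded below on it. Hence in the construction of the localized operator $\widetilde{\Box}_{k}$ the modification radius $\varrho$ can be chosen uniform over $Y_{r}$, after which the spectral gap \prettyref{eq: spectral gap}, the localization \prettyref{eq:Bergman localization}, the rescaling relation \prettyref{eq:Bergman kernel relation} and the resolvent expansion \prettyref{eq: resolvent expansion} all have $y$-independent constants (the model operators $\boxdot_{g_{y}^{TY},j_{y}^{r-2}R^{L},J_{y}^{TY}}$, $y\in Y_{r}$, form a compact family with uniform spectral gap). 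So \prettyref{eq:Bergmankernelexpansion} is uniform on $Y_{r}$ and gives $\Pi_{k}(y,y)\geq\big(\inf_{Y_{r}}\Pi^{g_{y}^{TY},j_{y}^{r-2}R^{L},J_{y}^{TY}}(0,0)\big)(1+o(1))\,k^{2/r}$ there. For $y\notin Y_{r}$ one has $r_{y}\leq r-2$, so $\Pi_{k}(y,y)$ carries a strictly larger power of $k$; the only difficulty is in a $k$-shrinking neighbourhood of $Y_{r}$ (and, recursively, of each $Y_{j}$), where the leading jet of $R^{L}$ at $y$ itself degenerates like a power of $k$. There the plan is to recentre the localization at a nearby point $y_{0}$ of the higher stratum rather than at $y$: the uniform localization near $y_{0}$ and the rescaling \prettyref{eq:Bergman kernel relation} give $\Pi_{k}(y,y)\geq k^{2/r}\,\Pi^{\boxdot_{g_{y_{0}}^{TY},j_{y_{0}}^{r-2}R^{L},J_{y_{0}}^{TY}}}(v_{k},v_{k})-O(k^{-\infty})$ with $v_{k}=k^{1/r}\exp_{y_{0}}^{-1}(y)$, and one needs that the on-diagonal kernel of the homogeneous model — positive, continuous, even in $v$ (as $j_{y_{0}}^{r-2}R^{L}$ has even degree $r-2$), and growing at spatial infinity — is bounded below by its value $\Pi^{\cdots}(0,0)$ at the origin, so that it is $\geq\inf_{Y_{r}}\Pi^{g_{y}^{TY},j_{y}^{r-2}R^{L},J_{y}^{TY}}(0,0)$ uniformly. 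Near a lower stratum $Y_{j}$ with $j<r$ the same recentering gives $\Pi_{k}(y,y)\geq k^{2/j}\cdot(\textrm{positive constant})\gg k^{2/r}$, so there only positivity of the model kernel is needed.

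The only point that truly goes beyond \prettyref{thm:Bergman kernel expansion} is this uniformity in the shrinking neighbourhoods of the strata closures: (i) the model operators attached to the compact stratum $Y_{j}$ have a uniform spectral gap because $|j^{r_{y}-2}R^{L}|$ is bounded below on $Y_{j}$, which makes the localization and resolvent estimates of \prettyref{thm:Bergman kernel expansion} uniform there; and (ii) the on-diagonal model Bergman kernel must be controlled at off-centre points — its growth at spatial infinity and, crucially, its lower bound by the central value $\Pi^{\cdots}(0,0)$. I expect (ii), essentially a monotonicity of the model Bergman kernel under increasing curvature, to be the genuinely delicate step; (i) is routine but lengthy bookkeeping of the constants in the earlier proof.
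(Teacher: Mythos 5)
Your plan for the lower bound hinges on the inequality $\Pi^{\boxdot_{0}}(v,v)\geq\Pi^{\boxdot_{0}}(0,0)$ for the homogeneous model operator --- a global monotonicity of the on-diagonal model Bergman density away from its degeneracy point --- which you flag as the delicate step but do not prove, and which is neither established in the paper nor an easy general fact. The model density does grow at spatial infinity and its radial derivative at $0$ has the right sign, but neither of these gives a global lower bound by the central value $\Pi^{\boxdot_{0}}(0,0)$. The paper's proof never evaluates a model kernel off-centre. Instead, after obtaining the pointwise (non-uniform) estimate at $y$, it fixes $\varepsilon>0$ and works on the $y$-dependent ball $B_{c_{\varepsilon}|j^{r_{y}-2}R^{L}(y)|}(y)$, on which the leading jet is $(1-\varepsilon)$-comparable to its value at $y$ by \prettyref{eq: first jet comparable}. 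It then partitions this ball into regions according to which of the competing quantities $C_{0}|j^{0}R^{L}(\mathsf{y})|k$, $C_{1}\big(|j^{1}R^{L}(\mathsf{y})/j^{0}R^{L}(\mathsf{y})|k\big)^{2/3}$, $\ldots$ dominates the target $k^{2/r_{y}}\Pi^{g_{y}^{TY},j_{y}^{r_{y}-2}R^{L},J_{y}^{TY}}(0,0)$; in each region it recentres the localization at the evaluation point $\mathsf{y}$ itself and rescales at the matching rate $\delta_{k^{-1/2}},\delta_{k^{-1/3}},\ldots,\delta_{k^{-1/r_{y}}}$, reading off an on-diagonal estimate at $\mathsf{y}$ with errors controlled only by $|j^{r_{y}-2}R^{L}(y)|^{-1}$, hence uniform over the ball. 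A finite cover of $Y$ by such balls then finishes. So the idea you are missing is not a model-kernel monotonicity, but a dominant-jet partition of each small ball together with a scale of rescaling adapted to that jet. Your point (i) about uniform constants on the compact top stratum $Y_{r}$ is correct, but on its own it does not reach the $k$-shrinking shells around $Y_{\geq3}$ --- and that is exactly the territory the cascade handles.

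For the upper bound your route --- $\Pi_{k}(y,y)\leq e^{-t\Box_{k}^{0}}(y,y)$, the scalar form of \prettyref{eq:Lichnerowicz for.} giving $\Box_{k}^{0}=\tfrac{1}{2}\Delta_{k}-\tfrac{k}{2}\tau^{L}-\tfrac{1}{2}\tau^{F}$, a diamagnetic/Feynman--Kac estimate at $t=\alpha/k$, optimization over $\alpha$, and the two-region split $\{\tau^{L}\geq\varepsilon\}\cup\{\tau^{L}<\varepsilon\}$ --- is a genuinely different and shorter argument than the paper's, which only remarks that the upper bound follows by the same cascade device. That half of your outline is sound; the substantive gap is in the lower bound.
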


\begin{proof}
Note that theorem \prettyref{thm:Bergman kernel expansion} already
shows 
\begin{equation}
\Pi_{k}\left(y,y\right)\geq C_{r_{y}}\left(\left|j^{r_{y}-2}R^{L}\right|k\right)^{2/r_{y}}-c_{y}\label{eq: first non-uniform estimate}
\end{equation}
$\forall y\in Y$, with $c_{y}=c\left(\left|j^{r_{y}-2}R^{L}\left(y\right)\right|^{-1}\right)=O_{\left|j^{r_{y}-2}R^{L}\left(y\right)\right|^{-1}}\left(1\right)$
being a ($y$-dependent) constant given in terms of the norm of the
first non-vanishing jet. The norm of this jet affects the choice of
$\varrho$ needed for \prettyref{eq: curv =00003D000026 jet comp.};
which in turn affects the $C^{\infty}$-norms of the coefficients
of \prettyref{eq: error operators} via \prettyref{eq:modified connection-1}.
We first show that this estimate extends to a small ($\left|j^{r_{y}-2}R^{L}\left(y\right)\right|$-
dependent) size neighborhood of $y$. To this end, for any $\varepsilon>0$
there exists a uniform constant $c_{\varepsilon}$ depending only
on $\varepsilon$ and$\left\Vert R^{L}\right\Vert _{C^{r}}$ such
that 
\begin{equation}
\left|j^{r_{y}-2}R^{L}\left(\mathsf{y}\right)\right|\geq\left(1-\varepsilon\right)\left|j^{r_{y}-2}R^{L}\left(y\right)\right|,\label{eq: first jet comparable}
\end{equation}
$\forall\mathsf{y}\in B_{c_{\varepsilon}\left|j^{r_{y}-2}R^{L}\right|}\left(y\right).$

We begin by rewriting the model Kodaira Laplacian $\tilde{\Box}_{k}$
\prettyref{eq:model Laplace} near $y$ in terms of geodesic coordinates
centered at $\mathsf{y}$. In the region 
\[
\mathsf{y}\in B_{c_{\varepsilon}\left|j^{r_{y}-2}R^{L}\right|}\left(y\right)\cap\left\{ C_{0}\left(\left|j^{0}R^{L}\left(\mathsf{y}\right)\right|k\right)\geq k^{2/r_{y}}\Pi^{g_{y}^{TY},j_{y}^{r_{y}-2}R^{L},J_{y}^{TY}}\left(0,0\right)\right\} 
\]
a rescaling of $\tilde{\Box}_{k}$ by $\delta_{k^{-1/2}}$, now centered
at $\mathsf{y}$, shows 
\begin{align}
\Pi_{k}\left(\mathsf{y},\mathsf{y}\right) & =k\Pi^{g_{\mathsf{y}}^{TY},j_{\mathsf{y}}^{0}R^{L},J_{\mathsf{y}}^{TY}}\left(0,0\right)+O_{\left|j^{r_{y}-2}R^{L}\left(y\right)\right|^{-1}}\left(1\right)\nonumber \\
 & =k\left|j^{0}R^{L}\left(\mathsf{y}\right)\right|\Pi^{g_{\mathsf{y}}^{TY},\frac{j_{\mathsf{y}}^{0}R^{L}}{\left|j^{0}R^{L}\left(\mathsf{y}\right)\right|},J_{\mathsf{y}}^{TY}}\left(0,0\right)+O_{\left|j^{r_{y}-2}R^{L}\left(y\right)\right|^{-1}}\left(1\right)\nonumber \\
 & \geq k^{2/r_{y}}\Pi^{g_{y}^{TY},j_{y}^{r_{y}-2}R^{L},J_{y}^{TY}}\left(0,0\right)+O_{\left|j^{r_{y}-2}R^{L}\left(y\right)\right|^{-1}}\left(1\right)\label{eq:est. region 1}
\end{align}
as in \prettyref{eq: first non-uniform estimate}. Now, in the region
\begin{align*}
\mathsf{y} & \in B_{c_{\varepsilon}\left|j^{r_{y}-2}R^{L}\right|}\left(y\right)\cap\left\{ C_{1}\left(\left|j^{1}R^{L}\left(\mathsf{y}\right)/j^{0}R^{L}\left(\mathsf{y}\right)\right|k\right)^{2/3}\right.\\
 & \qquad\left.\geq k^{2/r_{y}}\Pi^{g_{y}^{TY},j_{y}^{r_{y}-2}R^{L},J_{y}^{TY}}\left(0,0\right)\geq C_{0}\left(\left|j^{0}R^{L}\left(\mathsf{y}\right)\right|k\right)\right\} 
\end{align*}
a rescaling of $\tilde{\Box}_{k}$ by $\delta_{k^{-1/3}}$ centered
at $\mathsf{y}$ similarly shows 
\begin{align}
\Pi_{k}\left(\mathsf{y},\mathsf{y}\right) & =k^{2/3}\left[1+O\left(k^{2/r-2/3}\right)\right]\Pi^{g_{\mathsf{y}}^{TY},j_{\mathsf{y}}^{1}R^{L}/j_{\mathsf{y}}^{0}R^{L},J_{\mathsf{y}}^{TY}}\left(0,0\right)\nonumber \\
 & \qquad+O_{\left|j^{r_{y}-2}R^{L}\left(y\right)\right|^{-1}}\left(1\right)\nonumber \\
 & =k^{2/3}\left[1+O\left(k^{2/r-2/3}\right)\right]\left|j_{\mathsf{y}}^{1}R^{L}/j_{\mathsf{y}}^{0}R^{L}\right|^{2/3}\Pi^{g_{\mathsf{y}}^{TY},\frac{j_{\mathsf{y}}^{1}R^{L}/j_{\mathsf{y}}^{0}R^{L}}{\left|j_{\mathsf{y}}^{1}R^{L}/j_{\mathsf{y}}^{0}R^{L}\right|},J_{\mathsf{y}}^{TY}}\left(0,0\right)\nonumber \\
 & \qquad+O_{\left|j^{r_{y}-2}R^{L}\left(y\right)\right|^{-1}}\left(1\right)\\
 & \geq\left(1-\varepsilon\right)k^{2/r_{y}}\Pi^{g_{y}^{TY},j_{y}^{r_{y}-2}R^{L},J_{y}^{TY}}\left(0,0\right)+O_{\left|j^{r_{y}-2}R^{L}\left(y\right)\right|^{-1}}\left(1\right)\label{eq: est. region 2}
\end{align}
Next, in the region 
\begin{align*}
\mathsf{y} & \in B_{c_{\varepsilon}\left|j^{r_{y}-2}R^{L}\right|}\left(y\right)\cap\left\{ C_{2}\left(\left|j^{2}R^{L}\left(\mathsf{y}\right)/j^{1}R^{L}\left(\mathsf{y}\right)\right|k\right)^{1/2}\right.\\
 & \left.\geq k^{2/r_{y}}\Pi^{g_{y}^{TY},j_{y}^{r_{y}-2}R^{L},J_{y}^{TY}}\left(0,0\right)\geq\max\left[C_{0}\left(\left|j^{0}R^{L}\left(\mathsf{y}\right)\right|k\right),C_{1}\left(\left|j^{1}R^{L}\left(\mathsf{y}\right)/j^{0}R^{L}\left(\mathsf{y}\right)\right|k\right)^{2/3}\right]\right\} 
\end{align*}
a rescaling of $\tilde{\Box}_{k}$ by $\delta_{k^{-1/4}}$ centered
at $\mathsf{y}$ shows 
\begin{align}
\Pi_{k}\left(\mathsf{y},\mathsf{y}\right) & =k^{1/2}\left[1+O\left(k^{2/r-1/2}\right)\right]\Pi^{g_{\mathsf{y}}^{TY},j_{\mathsf{y}}^{2}R^{L}/j_{\mathsf{y}}^{1}R^{L},J_{\mathsf{y}}^{TY}}\left(0,0\right)+O_{\left|j^{r_{y}-2}R^{L}\left(y\right)\right|^{-1}}\left(1\right)\nonumber \\
 & =k^{1/2}\left[1+O\left(k^{2/r-1/2}\right)\right]\left|j_{\mathsf{y}}^{2}R^{L}/j_{\mathsf{y}}^{1}R^{L}\right|^{1/2}\Pi^{g_{\mathsf{y}}^{TY},\frac{j_{\mathsf{y}}^{2}R^{L}/j_{\mathsf{y}}^{1}R^{L}}{\left|j_{\mathsf{y}}^{2}R^{L}/j_{\mathsf{y}}^{1}R^{L}\right|},J_{\mathsf{y}}^{TY}}\left(0,0\right)+O_{\left|j^{r_{y}-2}R^{L}\left(y\right)\right|^{-1}}\left(1\right)\nonumber \\
 & \geq\left(1-\varepsilon\right)k^{2/r_{y}}\Pi^{g_{y}^{TY},j_{y}^{r_{y}-2}R^{L},J_{y}^{TY}}\left(0,0\right)+O_{\left|j^{r_{y}-2}R^{L}\left(y\right)\right|^{-1}}\left(1\right)\label{eq: est. region 3}
\end{align}
Continuing in this fashion, we are finally left with the region 
\begin{align*}
\mathsf{y} & \in B_{c_{\varepsilon}\left|j^{r_{y}-2}R^{L}\right|}\left(y\right)\cap\left\{ k^{2/r_{y}}\Pi^{g_{y}^{TY},j_{y}^{r_{y}-2}R^{L},J_{y}^{TY}}\left(0,0\right)\right.\\
 & \left.\geq\max\left[C_{0}\left(\left|j^{0}R^{L}\left(\mathsf{y}\right)\right|k\right),\ldots,C_{r_{y}-3}\left(\left|j^{r_{y}-3}R^{L}\left(\mathsf{y}\right)/j^{r_{y}-4}R^{L}\left(\mathsf{y}\right)\right|k\right)^{2/\left(r_{y}-1\right)}\right]\right\} .
\end{align*}
In this region we have 
\[
\left|j^{r_{y}-2}R^{L}\left(\mathsf{y}\right)/j^{r_{y}-3}R^{L}\left(\mathsf{y}\right)\right|\geq\left(1-\varepsilon\right)\left|j^{r_{y}-2}R^{L}\left(y\right)\right|+O\left(k^{2/r_{y}-2/\left(r_{y}-1\right)}\right)
\]
following \prettyref{eq: first jet comparable} with the remainder
being uniform. A rescaling by $\delta_{k^{-1/r_{y}}}$ then giving
a similar estimate in this region, we have finally arrived at 
\[
\Pi_{k}\left(\mathsf{y},\mathsf{y}\right)\geq\left(1-\varepsilon\right)k^{2/r_{y}}\Pi^{g_{y}^{TY},j_{y}^{r_{y}-2}R^{L},J_{y}^{TY}}\left(0,0\right)+O_{\left|j^{r_{y}-2}R^{L}\left(y\right)\right|^{-1}}\left(1\right)
\]
$\forall\mathsf{y}\in B_{c_{\varepsilon}\left|j^{r_{y}-2}R^{L}\right|}\left(y\right)$.

Finally a compactness argument finds a finite set of points $\left\{ y_{j}\right\} _{j=1}^{N}$
such that the corresponding $B_{c_{\varepsilon}\left|j^{r_{y_{j}}-2}R^{L}\right|}\left(y_{j}\right)$'s
cover $Y$. This gives a uniform constant $c_{1,\varepsilon}>0$ such
that 
\[
\Pi_{k}\left(y,y\right)\geq\left(1-\varepsilon\right)\left[\inf_{y\in Y_{r}}\Pi^{g_{y}^{TY},j_{y}^{r-2}R^{L},J_{y}^{TY}}\left(0,0\right)\right]k^{2/r}-c_{1,\varepsilon}
\]
$\forall y\in Y$ , $\varepsilon>0$ proving the lower bound \prettyref{eq: uniform est. Bergman}.
The argument for the upper bound is similar. 
\end{proof}
We now prove a second lemma giving a uniform estimate on the derivatives
of the Bergman kernel. Again below, the model Bergman kernel $\Pi^{g_{y}^{TY},j_{y}^{1}R^{L}/j_{y}^{0}R^{L},J_{y}^{TY}}\left(0,0\right)$
and its relevant ratio 
\[
\frac{\left|\left[j^{l}\Pi^{g_{\mathsf{y}}^{TY},j_{\mathsf{y}}^{1}R^{L}/j_{\mathsf{y}}^{0}R^{L},J_{\mathsf{y}}^{TY}}\right]\left(0,0\right)\right|}{\Pi^{g_{\mathsf{y}}^{TY},j_{\mathsf{y}}^{1}R^{L}/j_{\mathsf{y}}^{0}R^{L},J_{\mathsf{y}}^{TY}}\left(0,0\right)}
\]
are extended (continuously) by zero from $\left\{ y|j_{y}^{1}R^{L}/j_{y}^{0}R^{L}\neq0\right\} $
to $Y$. 
\begin{lem}
\label{lem: uniform estimate on derivative Bergman}The $l$-th jet
of the Bergman kernel satisfies 
\[
\left|j^{l}\left[\Pi_{k}\left(y,y\right)\right]\right|\leq k^{l/3}\left[1+o\left(1\right)\right]\left[\sup_{y\in Y}\frac{\left|\left[j^{l}\Pi^{g_{y}^{TY},j_{y}^{1}R^{L}/j_{y}^{0}R^{L},J_{y}^{TY}}\right]\left(0,0\right)\right|}{\Pi^{g_{y}^{TY},j_{y}^{1}R^{L}/j_{y}^{0}R^{L},J_{y}^{TY}}\left(0,0\right)}\right]\Pi_{k}\left(y,y\right)
\]
with the $o\left(1\right)$ term being uniform in $y\in Y$. 
\end{lem}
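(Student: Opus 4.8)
The plan is to run, essentially verbatim, the region‑decomposition of the proof of \prettyref{lem: uniform estimate bergman kernel}, now applied to the jet $j^l\Pi_k(y,y)$ and using the differentiated rescaling relation in place of the undifferentiated one. Fix $y\in Y$; in geodesic coordinates centred at a nearby point $\mathsf y$ one partitions the ball $B_{c_\varepsilon|j^{r_y-2}R^L|}(y)$ into the same finitely many regions, indexed by $m\in\{2,3,\ldots,r_y\}$, according to which dilation $\delta_{k^{-1/m}}$ of the model operator $\tilde\Box_k$ recentred at $\mathsf y$ is dominant. On the $m$‑th region one differentiates the rescaling identity \prettyref{eq:Bergman kernel relation}, as in \prettyref{eq: rescaling derivative of Berg.}, and substitutes the model Bergman kernel expansion \prettyref{eq: model Bergman expansion}, which holds in every $C^l$‑topology, together with the leading term identified in \prettyref{thm: expansion derivatives Bergman}. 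Estimating the lower‑order remainders exactly as in \prettyref{eq:est. region 1}--\prettyref{eq: est. region 3}, this yields on that region
\[
\big|j^l\Pi_k(\mathsf y,\mathsf y)\big|\ \le\ k^{l/m}\,\frac{\big|j^l\Pi^{g^{TY}_{\mathsf y},\,j^{m-2}_{\mathsf y}R^L/j^{m-3}_{\mathsf y}R^L,\,J^{TY}_{\mathsf y}}(0,0)\big|}{\Pi^{g^{TY}_{\mathsf y},\,j^{m-2}_{\mathsf y}R^L/j^{m-3}_{\mathsf y}R^L,\,J^{TY}_{\mathsf y}}(0,0)}\;\Pi_k(\mathsf y,\mathsf y)\ +\ o\big(k^{l/3}\big)\,\Pi_k(\mathsf y,\mathsf y),
\]
where for $m=2$ the relevant jet is simply $j^0_{\mathsf y}R^L$ and the $o$ is uniform.

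Next one shows that among the indices $m$ only $m=3$ contributes at the full order $k^{l/3}$. For $m=2$ the model is the model Kodaira Laplacian \prettyref{eq: model Kodaira Laplace} for a constant curvature, whose Bergman kernel is constant along the diagonal; hence the first term above vanishes for $l\ge1$, and the $m=2$ contribution is entirely the remainder. That remainder is $o(k^{l/3})\Pi_k(\mathsf y,\mathsf y)$ because the defining inequalities of the $m=2$ region force $|j^0_{\mathsf y}R^L|$ to be bounded below by a suitable negative power of $k$: the target $k^{2/r_y}\Pi^{g^{TY}_y,\ldots}(0,0)$ is of order at least $k^{2/r}$, and where $\mathsf y$ is near the locus $\{R^L=0\}$ the neighbouring regions $m\ge3$ absorb precisely the points at which $|j^0 R^L|$ would be too small. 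For $m\ge4$ one has $k^{l/m}\le k^{l/4}=o(k^{l/3})$ while the model jet ratio remains bounded uniformly over $Y$; when $m$ is even, moreover, the curvature jet $j^{m-2}R^L/j^{m-3}R^L$ is homogeneous of even degree, so the corresponding model operator commutes with $u\mapsto-u$ and the odd‑order jets of its diagonal vanish outright. For $m=3$ the curvature jet is $j^1_{\mathsf y}R^L/j^0_{\mathsf y}R^L$, which is itself of the form $j^1_{y'}R^L/j^0_{y'}R^L$ with $y'=\mathsf y$, so the first term is at most $k^{l/3}$ times the supremum appearing in the statement of the lemma, times $\Pi_k(\mathsf y,\mathsf y)$; that supremum is finite because the order‑$3$ model Bergman kernel jet ratio scales like $|j^1R^L/j^0R^L|^{l/3}$ and so extends continuously by zero across $\{R^L=0\}$.

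Finally, as in \prettyref{lem: uniform estimate bergman kernel}, a compactness argument selects finitely many centres $\{y_j\}$ whose neighbourhoods $B_{c_\varepsilon|j^{r_{y_j}-2}R^L|}(y_j)$ cover $Y$, and the pointwise bounds patch together into the asserted inequality with a $y$‑independent $o(1)$. I expect the main obstacle to be the uniform bookkeeping of the lower‑order remainders in the $m=2$ region near $\{R^L=0\}$: one must exploit the region structure to bound $|j^0_{\mathsf y}R^L|$ (hence the model Bergman kernel denominator) from below by the correct negative power of $k$, so that the ``flat‑model'' derivative corrections stay strictly below $k^{l/3}$, and, dually, verify that among the dilations $\delta_{k^{-1/m}}$ the exponent $l/3$ produced by $m=3$ is genuinely the largest — since $m=2$ is flat and the models for $m\ge4$ carry only the smaller exponents $l/m$.
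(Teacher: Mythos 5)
Your proposal follows the same region decomposition as the paper's proof of \prettyref{lem: uniform estimate on derivative Bergman}, and you correctly identify the $m=3$ dilation as the one that produces the critical exponent $k^{l/3}$, the scaling of the model jet ratio that makes the supremum finite, and the final compactness patching. The additional parity observation for even $m\ge4$ is correct but not needed (and is not in the paper): the elementary inequality $k^{l/m}\le k^{l/4}=o(k^{l/3})$ already suffices there.

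The genuine gap is in your treatment of the $m=2$ region, and it is precisely the issue you flag at the end as the ``main obstacle'' but do not resolve. You observe, correctly, that the constant‑curvature model has constant on‑diagonal Bergman kernel, so its $l$‑th jet at $0$ vanishes, and you then declare that ``the $m=2$ contribution is entirely the remainder'' and hence $o(k^{l/3})\Pi_k$. But this mischaracterizes where the jet actually lives: when the recentred model $\boxdot$ at $\mathsf y$ has $r_{\mathsf y}=2$, the leading model coefficient vanishes and the non‑trivial jet content migrates to the sub‑leading coefficient of the expansion — explicitly, $\partial^{\alpha}\Pi_k(\mathsf y,\mathsf y)=\tfrac{k}{2\pi}\,\partial^{\alpha}\tau^{L}(\mathsf y)+O(1)$, which is order $k$, the same order as $\Pi_k(\mathsf y,\mathsf y)$ itself. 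This is not an error term of the approximation scheme; it is the dominant term, and the ratio $|\partial^{\alpha}\tau^{L}(\mathsf y)|/\tau^{L}(\mathsf y)$ is unbounded as $\mathsf y$ approaches the degeneracy locus. The paper's proof handles exactly this by writing out the ratio and then exploiting the defining inequality of the $m=2$ region (which forces $\tau^{L}(\mathsf y)\gtrsim k^{2/r_y-1}$, hence $|\mathsf y|\gtrsim k^{-1/r_y}$ when measured from $y$) to conclude $|\partial^{\alpha}\tau^{L}(\mathsf y)|/\tau^{L}(\mathsf y)\lesssim k^{|\alpha|/3}\sup_{y\in Y}\bigl[\text{model jet ratio}\bigr]$. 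Until this quantitative step is carried out, your $m=2$ bound is an assertion rather than a proof; the rest of the argument is sound and matches the paper's.
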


\begin{proof}
The proof follows a similar argument as the previous lemma. Given
$\varepsilon>0$ we find a uniform $c_{\varepsilon}$ such that \prettyref{eq: first jet comparable}
holds for each $y\in Y$ and $\mathsf{y}\in B_{c_{\varepsilon}\left|j^{r_{y}-2}R^{L}\right|}\left(y\right)$.
Then rewrite the model Kodaira Laplacian $\tilde{\Box}_{k}$ \prettyref{eq:model Laplace}
near $y$ in terms of geodesic coordinates centered at $\mathsf{y}$.
In the region 
\[
\mathsf{y}\in B_{c_{\varepsilon}\left|j^{r_{y}-2}R^{L}\right|}\left(y\right)\cap\left\{ C_{0}\left(\left|j^{0}R^{L}\left(\mathsf{y}\right)\right|k\right)\geq k^{2/r_{y}}\Pi^{g_{y}^{TY},j_{y}^{r_{y}-2}R^{L},J_{y}^{TY}}\left(0,0\right)\right\} 
\]
a rescaling of $\tilde{\Box}_{k}$ by $\delta_{k^{-1/2}}$, now centered
at $\mathsf{y}$, shows 
\begin{align*}
\partial^{\alpha}\Pi_{k}\left(\mathsf{y},\mathsf{y}\right) & =\frac{k}{2\pi}\left(\partial^{\alpha}\tau^{L}\left(\mathsf{y}\right)\right)+O_{\left|j^{r_{y}-2}R^{L}\left(y\right)\right|^{-1}}\left(1\right)
\end{align*}
following \prettyref{rem: Recovering positive case} as $r_{\mathsf{y}}=2$.
Diving the above by \prettyref{eq:est. region 1} gives 
\begin{align*}
\frac{\left|\partial^{\alpha}\Pi_{k}\left(\mathsf{y},\mathsf{y}\right)\right|}{\Pi_{k}\left(\mathsf{y},\mathsf{y}\right)} & \leq\frac{\left|\partial^{\alpha}\tau^{L}\left(\mathsf{y}\right)\right|}{\tau^{L}\left(\mathsf{y}\right)}+O_{\left|j^{r_{y}-2}R^{L}\left(y\right)\right|^{-1}}\left(k^{-1}\right)\\
 & \leq k^{\left|\alpha\right|/3}\left[\sup_{y\in Y}\frac{\left|\left[j^{\left|\alpha\right|}\Pi^{g_{y}^{TY},j_{y}^{1}R^{L}/j_{y}^{0}R^{L},J_{y}^{TY}}\right]\left(0,0\right)\right|}{\Pi^{g_{y}^{TY},j_{y}^{1}R^{L}/j_{y}^{0}R^{L},J_{y}^{TY}}\left(0,0\right)}\right]\Pi_{k}\left(\mathsf{y},\mathsf{y}\right)\\
 & +O_{\left|j^{r_{y}-2}R^{L}\left(y\right)\right|^{-1}}\left(k^{-1}\right)
\end{align*}
Next, in the region 
\begin{align*}
\mathsf{y} & \in B_{c_{\varepsilon}\left|j^{r_{y}-2}R^{L}\right|}\left(y\right)\cap\left\{ C_{1}\left(\left|j^{1}R^{L}\left(\mathsf{y}\right)/j^{0}R^{L}\left(\mathsf{y}\right)\right|k\right)^{2/3}\right.\\
 & \left.\geq k^{2/r_{y}}\Pi^{g_{y}^{TY},j_{y}^{r_{y}-2}R^{L},J_{y}^{TY}}\left(0,0\right)\geq C_{0}\left(\left|j^{0}R^{L}\left(\mathsf{y}\right)\right|k\right)\right\} 
\end{align*}
a rescaling of $\tilde{\Box}_{k}$ by $\delta_{k^{-1/3}}$ centered
at $\mathsf{y}$ similarly shows 
\begin{align*}
\partial^{\alpha}\Pi_{k}\left(\mathsf{y},\mathsf{y}\right) & =k^{\left(2+\left|\alpha\right|\right)/3}\left[1+O\left(k^{2/r-2/3}\right)\right]\left[\partial^{\alpha}\Pi^{g_{\mathsf{y}}^{TY},j_{\mathsf{y}}^{1}R^{L}/j_{\mathsf{y}}^{0}R^{L},J_{\mathsf{y}}^{TY}}\right]\left(0,0\right)\\
 & \qquad+O_{\left|j^{r_{y}-2}R^{L}\left(y\right)\right|^{-1}}\left(k^{\left(1+\left|\alpha\right|\right)/3}\right)
\end{align*}
as in \prettyref{thm: expansion derivatives Bergman}. Dividing this
by \prettyref{eq: est. region 2} gives 
\begin{align*}
\frac{\left|\partial^{\alpha}\Pi_{k}\left(\mathsf{y},\mathsf{y}\right)\right|}{\Pi_{k}\left(\mathsf{y},\mathsf{y}\right)} & \leq k^{\left|\alpha\right|/3}\left(1+\varepsilon\right)\frac{\left|\left[\partial^{\alpha}\Pi^{g_{\mathsf{y}}^{TY},j_{\mathsf{y}}^{1}R^{L}/j_{\mathsf{y}}^{0}R^{L},J_{\mathsf{y}}^{TY}}\right]\left(0,0\right)\right|}{\left[\Pi^{g_{\mathsf{y}}^{TY},j_{\mathsf{y}}^{1}R^{L}/j_{\mathsf{y}}^{0}R^{L},J_{\mathsf{y}}^{TY}}\right]\left(0,0\right)}\\
 & \qquad+O_{\left|j^{r_{y}-2}R^{L}\left(y\right)\right|^{-1}}\left(k^{\left(\left|\alpha\right|-1\right)/3}\right)\\
 & \leq k^{\left|\alpha\right|/3}\left(1+\varepsilon\right)\left[\sup_{y\in Y}\frac{\left|\left[j^{\left|\alpha\right|}\Pi^{g_{y}^{TY},j_{y}^{1}R^{L}/j_{y}^{0}R^{L},J_{y}^{TY}}\right]\left(0,0\right)\right|}{\Pi^{g_{y}^{TY},j_{y}^{1}R^{L}/j_{y}^{0}R^{L},J_{y}^{TY}}\left(0,0\right)}\right]\\
 & \qquad+O_{\left|j^{r_{y}-2}R^{L}\left(y\right)\right|^{-1}}\left(k^{\left(\left|\alpha\right|-1\right)/3}\right).
\end{align*}
Continuing in this fashion as before eventually gives

\begin{align*}
\frac{\left|\partial^{\alpha}\Pi_{k}\left(\mathsf{y},\mathsf{y}\right)\right|}{\Pi_{k}\left(\mathsf{y},\mathsf{y}\right)} & \leq k^{\left|\alpha\right|/3}\left(1+\varepsilon\right)\left[\sup_{y\in Y}\frac{\left|\left[j^{\left|\alpha\right|}\Pi^{g_{y}^{TY},j_{y}^{1}R^{L}/j_{y}^{0}R^{L},J_{y}^{TY}}\right]\left(0,0\right)\right|}{\Pi^{g_{y}^{TY},j_{y}^{1}R^{L}/j_{y}^{0}R^{L},J_{y}^{TY}}\left(0,0\right)}\right]\\
 & \qquad+O_{\left|j^{r_{y}-2}R^{L}\left(y\right)\right|^{-1}}\left(k^{\left(\left|\alpha\right|-1\right)/3}\right)
\end{align*}
$\forall y\in Y$, $\mathsf{y}\in B_{c_{\varepsilon}\left|j^{r_{y}-2}R^{L}\right|}\left(y\right)$,
$\forall\alpha\in\mathbb{N}_{0}^{2}$. By compactness one again finds
a uniform $c_{1,\varepsilon}$ such that 
\[
\frac{\left|\partial^{\alpha}\Pi_{k}\left(y,y\right)\right|}{\Pi_{k}\left(y,y\right)}\leq k^{\left|\alpha\right|/3}\left(1+\varepsilon\right)\left[\sup_{y\in Y}\frac{\left|\left[j^{\left|\alpha\right|}\Pi^{g_{y}^{TY},j_{y}^{1}R^{L}/j_{y}^{0}R^{L},J_{y}^{TY}}\right]\left(0,0\right)\right|}{\Pi^{g_{y}^{TY},j_{y}^{1}R^{L}/j_{y}^{0}R^{L},J_{y}^{TY}}\left(0,0\right)}\right]+c_{1,\varepsilon}
\]
$\forall y\in Y$, proving the lemma. 
\end{proof}

\section{\label{subsection:Tian} Induced Fubini-Study metrics}

A theorem of Tian \cite{Tian90}, with improvements in \cite{Catlin97-Bergmankernel,Zelditch98-Bergmankernel}
(see also \cite[S 5.1.2, S 5.1.4]{Ma-Marinescu}), asserts that the
induced Fubini-Study metrics by Kodaira embeddings given by $k$th
tensor powers of a positive line bundle converge to the curvature
of the bundle as $k$ goes to infinity. In this Section we will give
a generalization for semi-positive line bundles on compact Riemann
surfaces.

Let us review first Tian's theorem. Let $(Y,J,g^{TY})$ be a compact
Hermitian manifold, $(L,h^{L})$, $(F,h^{F})$ be holomorphic Hermitian
line bundles such that $(L,h^{L})$ is positive. We endow $H^{0}(Y;F\otimes L^{k})$
with the $L^{2}$ product induced by $g^{TY}$, $h^{L}$ and $h^{F}$.
This induces a Fubini-Study metric $\omega_{FS}$ on the projective
space $\mathbb{P}\left[H^{0}\left(Y;F\otimes L^{k}\right)^{*}\right]$
and a Fubini-Study metric $h_{FS}$ on $\mathcal{O}(1)\to\mathbb{P}\left[H^{0}\left(Y;F\otimes L^{k}\right)^{*}\right]$
(see \cite[S 5.1]{Ma-Marinescu}). Since $(L,h^{L})$ is positive
the Kodaira embedding theorem shows that the Kodaira maps $\Phi_{k}:Y\rightarrow\mathbb{P}\left[H^{0}\left(Y;F\otimes L^{k}\right)^{*}\right]$
(see \prettyref{eq:Kodaira map}) are embeddings for $k\gg0$. Moreover,
the Kodaira map induces a canonical isomorphism $\Theta_{k}:F\otimes L^{k}\to\Phi_{k}^{*}\mathcal{O}(1)$
and we have (see e.g.\ \cite[(5.1.15)]{Ma-Marinescu}) 
\begin{equation}
(\Theta_{k}^{*}h_{FS})(y)=\Pi_{k}(y,y)^{-1}h^{F\otimes L^{k}}(y),\:\:y\in Y.\label{eq:hFS}
\end{equation}
This implies immediately (see e.g.\ \cite[(5.1.50)]{Ma-Marinescu})
\begin{equation}
\frac{1}{k}\Phi_{k}^{*}\omega_{FS}-\frac{i}{2\pi}R^{L}=\frac{i}{2\pi k}R^{F}-\frac{i}{2\pi k}\bar{\partial}\partial\ln\Pi_{k}\left(y,y\right).\label{eq:oFS}
\end{equation}
Applying now the Bergman kernel expansion in the positive case one
obtains Tian's theorem, which asserts that we have 
\begin{equation}
\frac{1}{k}\Phi_{k}^{*}\omega_{FS}-\frac{i}{2\pi}R^{L}=O\left(k^{-1}\right),\:\:k\to\infty,\:\:\text{in any \ensuremath{C^{\ell}}-topology}.\label{eq:Tian positive}
\end{equation}
Let us also consider the convergence of the induced Fubini-Study metric
$\Theta_{k}^{*}h_{FS}$ to the initial metric $h^{L}$. For this purpose
we fix a metric $h_{0}^{L}$ on $L$ with positive curvature. We can
then express $h^{L}=e^{-\varphi}h_{0}^{L}$, $\Theta_{k}^{*}h_{FS}=e^{-\varphi_{k}}(h_{0}^{L})^{k}\otimes h^{F}$,
where $\varphi,\varphi_{k}\in C^{\infty}(Y)$ are the global potentials
of the metrics $h$ and $\Theta_{k}^{*}h_{FS}$ with respect to $h_{0}^{L}$
and $(h_{0}^{L})^{k}\otimes h^{F}$. Note that 
\[
R^{(L,h^{L})}=R^{(L,h_{0}^{L})}+\partial\overline{\partial}\varphi,\:\:R^{(L^{k},\Theta_{k}^{*}h_{FS})}=kR^{(L,h_{0}^{L})}+R^{(F,h^{F})}+\partial\overline{\partial}\varphi_{k},
\]
and $\frac{i}{2\pi}R^{(L,\Theta_{k}^{*}h_{FS})}=\Phi_{k}^{*}\omega_{FS}$.
Then \prettyref{eq:hFS} can be written as 
\begin{equation}
\frac{1}{k}\varphi_{k}(y)-\varphi(y)=\frac{1}{k}\ln\Pi_{k}(y,y),\:\:y\in Y.\label{eq:hFS1}
\end{equation}
We obtain by \prettyref{eq:Bergmankernelexpansion} that 
\begin{equation}
\Big|\frac{1}{k}\varphi_{k}-\varphi\Big|_{C^{0}(Y)}=O\big(k^{-1}\ln k\big),\:\:k\to\infty,\label{eq:hFS2}
\end{equation}
that is, the normalized potentials of the Fubini-Study metric converge
uniformly on $Y$ to the potential of the initial metric $h^{L}$
with speed $k^{-1}\ln k$. Moreover, 
\begin{equation}
\Big|\frac{1}{k}\partial\varphi_{k}-\partial\varphi\Big|_{C^{0}(Y)}=O\big(k^{-1}\big),\:\:\Big|\frac{1}{k}\partial\overline{\partial}\varphi_{k}-\partial\overline{\partial}\varphi\Big|_{C^{0}(Y)}=O\big(k^{-1}\big),\:\:k\to\infty,\label{eq:hFS3}
\end{equation}
and we get the same bound $O\big(k^{-1}\big)$ for higher derivatives,
obtaining again \prettyref{eq:Tian positive}. Note that if $g^{TY}$
is the metric associated to $\omega=\frac{i}{2\pi}R^{L}$, then we
have a bound $O(k^{-2})$ in \prettyref{eq:Tian positive} and \prettyref{eq:hFS3}.

We return now to our situation and consider that $Y$ is a compact
Riemann surface and $(L,h^{L})$, $(F,h^{F})$ be holomorphic Hermitian
line bundles on $Y$ such that $(L,h^{L})$ is semi-positive and its
curvature vanishes at finite order. An immediate consequence of \prettyref{lem: uniform estimate bergman kernel}
is that the base locus 
\[
\textrm{Bl}\left(F\otimes L^{k}\right)\coloneqq\left\{ y\in Y|s\left(y\right)=0,\:s\in H^{0}\left(Y;F\otimes L^{k}\right)\right\} =\emptyset
\]
is empty for $k\gg0$. This shows that the subspace 
\[
\Phi_{k,y}\coloneqq\left\{ s\in H^{0}\left(Y;F\otimes L^{k}\right)|s\left(y\right)=0\right\} \subset H^{0}\left(Y;F\otimes L^{k}\right),
\]
is a hyperplane for each $y\in Y$. One may identify the Grassmanian
$\mathbb{G}\left(d_{k}-1;H^{0}\left(Y;F\otimes L^{k}\right)\right)$,
$d_{k}\coloneqq\textrm{dim}H^{0}\left(Y;F\otimes L^{k}\right)$, with
the projective space $\mathbb{P}\left[H^{0}\left(Y;F\otimes L^{k}\right)^{*}\right]$
by sending a non-zero dual element in $H^{0}\left(Y;F\otimes L^{k}\right)^{*}$
to its kernel. This now gives a well-defined Kodaira map 
\begin{align}
\Phi_{k}:Y & \rightarrow\mathbb{P}\left[H^{0}\left(Y;F\otimes L^{k}\right)^{*}\right],\nonumber \\
\Phi_{k}\left(y\right) & \coloneqq\left\{ s\in H^{0}\left(Y;F\otimes L^{k}\right)|s\left(y\right)=0\right\} .\label{eq:Kodaira map}
\end{align}
It is well known that the map is holomorphic. 
\begin{thm}
\label{thm:Tian semi-positive} Let $Y$ be a compact Riemann surface
and $(L,h^{L})$, $(F,h^{F})$ be holomorphic Hermitian line bundles
on $Y$ such that $(L,h^{L})$ is semi-positive and its curvature
vanishes at most at finite order. Then the normalized potentials of
the Fubini-Study metric converge uniformly on $Y$ to the potential
of the initial metric $h^{L}$ with speed $k^{-1}\ln k$ as in \prettyref{eq:hFS2}.
Moreover, 
\begin{equation}
\Big|\frac{1}{k}\partial\varphi_{k}-\partial\varphi\Big|_{C^{0}(Y)}\,,\:\:\Big|\frac{1}{k}\overline{\partial}\varphi_{k}-\overline{\partial}\varphi\Big|_{C^{0}(Y)}=O\big(k^{-2/3}\big),\:\:k\to\infty,\label{eq:hFS4}
\end{equation}
and 
\begin{equation}
\Big|\frac{1}{k}\partial\overline{\partial}\varphi_{k}-\partial\overline{\partial}\varphi\Big|_{C^{0}(Y)}=O\big(k^{-1/3}\big),\:\:k\to\infty,\label{eq:hFS5}
\end{equation}
especially 
\begin{equation}
\frac{1}{k}\Phi_{k}^{*}\omega_{FS}-\frac{i}{2\pi}R^{L}=O\left(k^{-1/3}\right),\:\:k\to\infty,\label{eq: uniform conv. of FS}
\end{equation}
uniformly on $Y$. On compact sets of $Y_{2}$ the estimates \prettyref{eq:Tian positive}
and \prettyref{eq:hFS3} hold. 
\end{thm}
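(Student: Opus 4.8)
The plan is to reduce every assertion of the theorem to uniform estimates on the logarithm of the on-diagonal Bergman kernel. First I would record that, by \prettyref{lem: uniform estimate bergman kernel}, the base locus of $F\otimes L^{k}$ is empty for $k\gg 0$, so the Kodaira map \prettyref{eq:Kodaira map} and the induced isomorphism $\Theta_{k}\colon F\otimes L^{k}\to\Phi_{k}^{*}\mathcal{O}(1)$ are globally defined; the computation leading to \prettyref{eq:hFS}, and hence to the identity $\tfrac{1}{k}\varphi_{k}(y)-\varphi(y)=\tfrac{1}{k}\ln\Pi_{k}(y,y)$ of \prettyref{eq:hFS1}, is purely local and formal and so holds verbatim in our setting. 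Applying $\partial\overline{\partial}$ to this identity yields the curvature relation $\tfrac{1}{k}\Phi_{k}^{*}\omega_{FS}-\tfrac{i}{2\pi}R^{L}=\tfrac{i}{2\pi k}R^{F}-\tfrac{i}{2\pi k}\overline{\partial}\partial\ln\Pi_{k}(y,y)$ exactly as in \prettyref{eq:oFS}. Thus everything comes down to bounding $\ln\Pi_{k}(y,y)$ and its first and second derivatives uniformly on $Y$.

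For \prettyref{eq:hFS2} I would use that \prettyref{lem: uniform estimate bergman kernel} provides constants $0<c_{1}\le c_{2}$ with $c_{1}k^{2/r}\le\Pi_{k}(y,y)\le c_{2}k$ uniformly in $y$, so that for $k\gg0$ one has $\ln\Pi_{k}(y,y)>0$ and $|\ln\Pi_{k}(y,y)|=O(\ln k)$, whence $\bigl|\tfrac{1}{k}\varphi_{k}-\varphi\bigr|_{C^{0}(Y)}=O(k^{-1}\ln k)$. For the first-order bounds \prettyref{eq:hFS4} I would write $\partial\ln\Pi_{k}=(\partial\Pi_{k})/\Pi_{k}$ and invoke \prettyref{lem: uniform estimate on derivative Bergman} with $l=1$, which gives $|\partial\ln\Pi_{k}(y,y)|=O(k^{1/3})$ uniformly; dividing by $k$ yields $O(k^{-2/3})$, and identically for $\overline{\partial}$. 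For \prettyref{eq:hFS5} I would use the identity $\partial\overline{\partial}\ln\Pi_{k}=(\partial\overline{\partial}\Pi_{k})/\Pi_{k}-(\partial\Pi_{k}\wedge\overline{\partial}\Pi_{k})/\Pi_{k}^{2}$ (up to sign) and bound the first summand by \prettyref{lem: uniform estimate on derivative Bergman} with $l=2$ and the second by the $l=1$ bound applied to each factor, obtaining $|\partial\overline{\partial}\ln\Pi_{k}(y,y)|=O(k^{2/3})$ and hence $O(k^{-1/3})$ after dividing by $k$. Feeding this last estimate, together with $\tfrac{i}{2\pi k}R^{F}=O(k^{-1})$, into the curvature relation above produces \prettyref{eq: uniform conv. of FS}.

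It remains to upgrade the rates on a compact subset $K\Subset Y_{2}$. There, Remark \prettyref{rem: Recovering positive case} asserts that the Bergman expansion \prettyref{thm:Bergman kernel expansion} is uniform in every $C^{\ell}(K)$-norm and coincides with the positive-case expansion, so $\Pi_{k}(y,y)=\tfrac{k}{2\pi}\tau^{L}(y)\bigl(1+O(k^{-1})\bigr)$ in $C^{\ell}(K)$. Consequently $\ln\Pi_{k}(y,y)-\ln(k/2\pi)=\ln\tau^{L}(y)+O(k^{-1})$ in $C^{\ell}(K)$, so all derivatives of $\ln\Pi_{k}(y,y)$ are $O(1)$ on $K$, which together with the identities above gives \prettyref{eq:hFS3} and \prettyref{eq:Tian positive} on $K$.

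I do not expect a serious obstacle at this stage: the analytic content is entirely carried by the uniform Bergman estimates \prettyref{lem: uniform estimate bergman kernel} and \prettyref{lem: uniform estimate on derivative Bergman}, and what remains is bookkeeping with logarithmic-derivative identities. The one point requiring minor care is that these identities transfer the bounds on $\partial^{\alpha}\Pi_{k}$ \emph{relative to} $\Pi_{k}$---which is precisely the normalization in which \prettyref{lem: uniform estimate on derivative Bergman} is stated---directly into bounds on $\partial^{\alpha}\ln\Pi_{k}$ with no loss, so that the powers $k^{1/3}$ (first order) and $k^{2/3}$ (second order) propagate cleanly; and that the two-sided positivity in \prettyref{lem: uniform estimate bergman kernel} is exactly what is needed to control $|\ln\Pi_{k}|$ from both sides in \prettyref{eq:hFS2}.
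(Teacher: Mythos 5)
Your proposal is correct and is essentially the paper's own argument, which is stated there in a single sentence citing \prettyref{eq:oFS}, \prettyref{eq:hFS1} and \prettyref{lem: uniform estimate on derivative Bergman}. You have simply filled in the bookkeeping (two-sided bounds from \prettyref{lem: uniform estimate bergman kernel} for the $C^0$ estimate, the quotient and logarithmic-derivative identities for the first- and second-order estimates, and Remark \prettyref{rem: Recovering positive case} on $Y_2$), all of which is exactly what the paper's cited lemmas are designed to provide.
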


\begin{proof}
The proof follows from \prettyref{eq:oFS}, \prettyref{eq:hFS1} and
the uniform estimate of \prettyref{lem: uniform estimate on derivative Bergman}
on the derivatives of the Bergman kernel.
\end{proof}
As we noted before, the bundle $L$ satisfying the hypotheses of \prettyref{thm:Tian semi-positive}
is ample, so for $k\gg0$ the Kodaira map is an embedding and the
induced Fubini-Study forms $\frac{1}{k}\Phi_{k}^{*}\omega_{FS}$ are
indeed metrics on $Y$. Due to the possible degeneration of the curvature
$R^{L}$ the rate of convergence in \prettyref{eq: uniform conv. of FS}
is slower than in the positive case \prettyref{eq:Tian positive}.

One can easily prove a generalization of \prettyref{thm:Tian semi-positive}
for vector bundles $(F,h^{F})$ of arbitrary rank (see \cite[S 5.1.4]{Ma-Marinescu}
for the case of a positive bundle $(L,h^{L})$). We have then Kodaira
maps $\Phi_{k}:Y\to\mathbb{G}\left(\textrm{rk}\left(F\right);H^{0}\left(Y;F\otimes L^{k}\right)^{\!*}\right)$
into the Grassmanian of $\textrm{rk}\left(F\right)$-dimensional linear
spaces of $H^{0}\left(Y;F\otimes L^{k}\right)^{\!*}$ and we introduce
the Fubini-Study metric on the Grassmannian as the curvature of the
determinant bundle of the dual of the tautological bundle (cf.\ \cite[(5.1.6)]{Ma-Marinescu}).
Then by following the proof of \cite[Theorem 5.1.17]{Ma-Marinescu}
and using \prettyref{lem: uniform estimate on derivative Bergman}
we obtain 
\begin{equation}
\frac{1}{k}\Phi_{k}^{*}\omega_{FS}-\textrm{rk}\left(F\right)\frac{i}{2\pi}R^{L}=O\left(k^{-1/3}\right),\:\:k\to\infty,\label{eq: uniform conv. of FS1}
\end{equation}
uniformly on $Y$.

\section{\label{sec:Toeplitz-operators} Toeplitz operators}

A generalization of the projector \prettyref{eq: Bergman projector}
and Bergman kernel \prettyref{eq: Bergman kernel} is given by the
notion of a Toeplitz operator. The Toeplitz operator $T_{f,k}$ operator
corresponding to a section $f\in C^{\infty}\left(Y;\textrm{End}\left(F\right)\right)$
is defined via 
\begin{align}
T_{f,k} & :C^{\infty}\left(Y;F\otimes L^{k}\right)\rightarrow C^{\infty}\left(Y;F\otimes L^{k}\right)\nonumber \\
T_{f,k} & \coloneqq\Pi_{k}f\Pi_{k},\label{eq:standard Toeplitz}
\end{align}
where $f$ denotes the operator of pointwise composition by $f$.
Each Toeplitz operator above further maps $H^{0}\left(Y;F\otimes L^{k}\right)$
to itself.

We now prove the expansion for the kernel of a Toeplitz operator generalizing
\prettyref{thm:Bergman kernel expansion}. For positive line bundles
the analogous result was proved in \cite[Theorem 2]{Charles2003}
for compact Kähler manifolds and $F=\mathbb{C}$ and in \cite[Lemma 7.2.4 and (7.4.6)]{Ma-Marinescu},
\cite[Lemma 4.6]{Ma-Marinescu2008}, in the symplectic case. 
\begin{thm}
\label{thm:Toeplitz expansion}Let $Y$ be a compact Riemann surface,
$(L,h^{L})\to Y$ a semi-positive line bundle whose curvature $R^{L}$
vanishes to finite order at any point. Let $(F,h^{F})\to Y$ be a
Hermitian holomorphic vector bundle. Then the kernel of the Toeplitz
operator \prettyref{eq:standard Toeplitz} has an on diagonal asymptotic
expansion 
\[
T_{f,k}\left(y,y\right)=k^{2/r_{y}}\left[\sum_{j=0}^{N}c_{j}\left(f,y\right)k^{-2j/r_{y}}\right]+O\left(k^{-2N/r_{y}}\right),\quad\forall N\in\mathbb{N}
\]
where the coefficients $c_{j}(f,\cdot)$ are sections of $\textrm{End}(F)$
with leading term 
\[
c_{0}\left(f,y\right)=\Pi^{g_{y}^{TY},R_{y}^{TY},J_{y}^{TY}}\left(0,0\right)f\left(y\right).
\]
\end{thm}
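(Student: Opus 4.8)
The plan is to follow the strategy used for \prettyref{thm:Bergman kernel expansion}. By \prettyref{eq:standard Toeplitz} the Schwartz kernel of $T_{f,k}=\Pi_k f\Pi_k$ on the diagonal is
\[
T_{f,k}(y,y)=\int_Y\Pi_k(y,z)\,f(z)\,\Pi_k(z,y)\,dv_Y(z),
\]
where $\Pi_k(y,z)\colon(F\otimes L^k)_z\to(F\otimes L^k)_y$ and $f(z)$ acts on the $F_z$-factor. First I would localize this integral near the diagonal: using $\Pi_k(\cdot,y)=O(k^{-\infty})$ off $B_\varrho(y)$ and $\Pi_k(\cdot,y)=\tilde\Pi_k(\cdot,y)+O(k^{-\infty})$ on $B_\varrho(y)$ from \prettyref{eq:Bergman localization}, together with the polynomial bound $\int_Y|\Pi_k(z,w)|\,dv_Y(w)=O(k^{1/2})$ coming from the idempotence of $\Pi_k$, one replaces $\Pi_k$ by the model Bergman kernel $\tilde\Pi_k$ of $\tilde\Box_k$ \prettyref{eq:model Laplace} and restricts the integral to $B_\varrho(y)$, at the cost of an $O(k^{-\infty})$ error.

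Next I would pass to geodesic coordinates centred at $y$ and substitute $z=\delta_{k^{-1/r_y}}u$, under which $dv_Y(z)=k^{-2/r_y}\bigl(1+O(k^{-2/r_y}|u|^2)\bigr)\,du$. The rescaling relation \prettyref{eq:Bergman kernel relation} gives $\tilde\Pi_k(y,z)=k^{2/r_y}\Pi^{\boxdot}(0,u)$ and $\tilde\Pi_k(z,y)=k^{2/r_y}\Pi^{\boxdot}(u,0)$; the localized integral thus becomes
\[
T_{f,k}(y,y)=k^{2/r_y}\int_{\mathbb{R}^2}\Pi^{\boxdot}(0,u)\,f(\delta_{k^{-1/r_y}}u)\,\Pi^{\boxdot}(u,0)\,\bigl(1+O(k^{-2/r_y}|u|^2)\bigr)\,du+O(k^{-\infty}).
\]
Taylor expanding $f(\delta_{k^{-1/r_y}}u)=\sum_{|\alpha|\le 2N}\tfrac{1}{\alpha!}(\partial^\alpha f)(y)\,(k^{-1/r_y}u)^\alpha+O(k^{-(2N+1)/r_y}|u|^{2N+1})$, inserting the model expansion $\Pi^{\boxdot}=\sum_{j=0}^{N}k^{-j/r_y}\mathtt{C}_j+O(k^{-(N+1)/r_y})$ from \prettyref{eq: model Bergman expansion}, and collecting powers of $k^{-1/r_y}$ --- integrating term by term, which is legitimized by the Gaussian-type decay of the $\mathtt{C}_j$ --- produces an asymptotic expansion $T_{f,k}(y,y)=k^{2/r_y}\sum_j c_j(f,y)k^{-j/r_y}+\cdots$ with $c_j(f,\cdot)\in C^\infty(Y;\mathrm{End}(F))$ depending on finitely many derivatives of $f$ and of the geometric data. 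The leading term is
\[
c_0(f,y)=\int_{\mathbb{R}^2}\Pi^{\boxdot_0}(0,u)\,f(y)\,\Pi^{\boxdot_0}(u,0)\,du=\Pi^{\boxdot_0}(0,0)\,f(y),
\]
where the middle equality is the reproducing (idempotence) property of the orthogonal projection $\Pi^{\boxdot_0}=\mathtt{C}_0$ onto $\ker\boxdot_0$, $\boxdot_0$ being the model operator \prettyref{eq:leading term}, and $\Pi^{\boxdot_0}(0,0)=\Pi^{g_y^{TY},j_y^{r_y-2}R^L,J_y^{TY}}(0,0)$ is the leading coefficient $c_0(y)$ of \prettyref{thm:Bergman kernel expansion}.

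To see that only the even powers $k^{-2j/r_y}$ survive I would reuse the parity argument from the end of the proof of \prettyref{thm:Bergman kernel expansion}: under $\delta_{-1}\colon u\mapsto-u$ one has $\mathtt{C}_j(0,-u)=(-1)^j\mathtt{C}_j(0,u)$ and $\mathtt{C}_j(-u,0)=(-1)^j\mathtt{C}_j(u,0)$, while $u^\alpha\mapsto(-1)^{|\alpha|}u^\alpha$ and the volume density is even; hence the integrand contributing the coefficient of $k^{-m/r_y}$ changes sign by $(-1)^m$ under this measure-preserving substitution, so that coefficient vanishes for $m$ odd, and reindexing $m=2j$ gives the stated expansion. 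On compact subsets of $Y_2$ the same computation with the rescaling $\delta_{k^{-1/2}}$ recovers, uniformly, the classical Toeplitz kernel expansion of \cite{Ma-Marinescu}.

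The step I expect to be the main obstacle is the localization and the passage to the limit under the integral sign over the rescaled variable $u$, whose range $|u|\lesssim\varrho k^{1/r_y}$ exhausts $\mathbb{R}^2$: one must upgrade the pointwise estimates of \cite{Marinescu-Savale18} to uniform (in $k$) Gaussian-type off-diagonal bounds for $\tilde\Pi_k$ and for the model kernels $\Pi^{\boxdot}$ on $\mathbb{R}^2\times\mathbb{R}^2$, using the model spectral gap \prettyref{eq: spectral gap}. Granted these bounds, what remains is the same bookkeeping --- Taylor expansion, term-by-term integration and the parity argument --- as in the Bergman kernel case.
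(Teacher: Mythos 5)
Your proposal follows the paper's own proof step by step: localization via \prettyref{eq:Bergman localization}, the rescaling $\delta_{k^{-1/r_y}}$ combined with \prettyref{eq:Bergman kernel relation}, Taylor expansion of $f$ together with the model Bergman expansion \prettyref{eq: model Bergman expansion}, and the same parity argument used at the end of the proof of \prettyref{thm:Bergman kernel expansion}; your explicit evaluation of $c_0(f,y)$ via the reproducing property of $\Pi^{\boxdot_0}$ is a useful articulation of what the paper leaves implicit when it cites \prettyref{eq:leading term}. The ``main obstacle'' you flag is already resolved in \prettyref{sec:Model-operators} (and is invoked in the paper's proof), where it is shown that $\varphi\left(\boxdot\right)\left(\cdot,0\right)\in\mathcal{S}\left(V\right)$ for $\varphi\in\mathcal{S}\left(\mathbb{R}\right)$, which provides the rapid off-diagonal decay of the model kernel needed to integrate over the exhausting range of $u$.
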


\begin{proof}
Firstly from the definition \prettyref{eq:standard Toeplitz} and
the localization/rescaling properties \prettyref{eq:Bergman localization},
\prettyref{eq:Bergman kernel relation} one has 
\begin{align}
T_{f,k}\left(y,y\right) & =\int_{Y}dy'\,\Pi_{k}\left(y,y'\right)f\left(y'\right)\Pi_{k}\left(y',y\right)\nonumber \\
 & =\int_{B_{\varepsilon}\left(y\right)}dy'\,\tilde{\Pi}_{k}\left(0,y'\right)f\left(y'\right)\tilde{\Pi}_{k}\left(y',0\right)+O\left(k^{-\infty}\right)\nonumber \\
 & =\int_{B_{\varepsilon}\left(y\right)}dy'\,k^{4/r_{y}}\Pi^{\boxdot}\left(0,y'k^{1/r_{y}}\right)f\left(y'\right)\Pi^{\boxdot}\left(y'k^{1/r_{y}},0\right)+O\left(k^{-\infty}\right)\nonumber \\
 & =\int_{k^{1/r_{y}}B_{\varepsilon}\left(y\right)}dy'\,k^{2/r_{y}}\Pi^{\boxdot}\left(0,y'\right)f\left(y'k^{-1/r_{y}}\right)\Pi^{\boxdot}\left(y',0\right)+O\left(k^{-\infty}\right).\label{eq: first step Toeplitz expansino}
\end{align}
Next as in \prettyref{sec:Model-operators}, $\varphi\left(\boxdot\right)\left(.,0\right)\in\mathcal{S}\left(V\right)$
for $\varphi\in\mathcal{S}\left(\mathbb{R}\right)$ in the Schwartz
class. Thus plugging \prettyref{eq: model Bergman expansion} and
a Taylor expansion 
\[
f\left(y'k^{-1/r_{y}}\right)=\sum_{\left|\alpha\right|\leq N+1}\frac{1}{\alpha!}\left(y'\right)^{\alpha}k^{-\alpha/r_{y}}f^{\left(\alpha\right)}\left(0\right)+O\left(k^{-\left(N+1\right)/r_{y}}\right)
\]
into \prettyref{eq: first step Toeplitz expansino} above gives the
result with the leading term again coming from \prettyref{eq:leading term}.
Finally and as in the proof of \prettyref{thm:Bergman kernel expansion},
there are no odd powers of $k^{-j/r_{y}}$ as the corresponding coefficients
are given by odd integrals (the integrands change sign by $\left(-1\right)^{j}$
under $\delta_{-1}x\coloneqq-x$) which are zero. 
\end{proof}
We now show that the Toeplitz operators \prettyref{eq:standard Toeplitz}
can be composed up to highest order generalizing the results of \cite{Bordemann-Meinrenken-Schlichenmaier94}
in the Kähler case and $F=\mathbb{C}$ and \cite[Theorems 7.4.1--2]{Ma-Marinescu},
\cite[Theorems 1.1 and 4.19]{Ma-Marinescu2008} in the symplectic
case. 
\begin{thm}
Given $f,g\in C^{\infty}\left(Y;\textrm{End}\left(F\right)\right)$,
the Toeplitz operators \prettyref{eq:standard Toeplitz} satisfy 
\begin{align}
\lim_{k\rightarrow\infty}\left\Vert T_{f,k}\right\Vert  & =\left\Vert f\right\Vert _{\infty}\coloneqq\sup_{\substack{y\in Y\\
u\in F_{y}\setminus0
}
}\frac{|f(y)u|_{h^{F}}}{|u|_{h^{F}}}\,,\label{eq: norm Toeplitz}\\
T_{f,k}T_{g,k} & =T_{fg,k}+O_{L^{2}\rightarrow L^{2}}\left(k^{-1/r}\right).\label{eq: leading comp. Toeplit}
\end{align}
\end{thm}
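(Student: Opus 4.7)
The plan is to treat the norm asymptotics \prettyref{eq: norm Toeplitz} and the composition formula \prettyref{eq: leading comp. Toeplit} separately. For the composition I will use the algebraic identity $T_{f,k}T_{g,k}-T_{fg,k}=-\Pi_k f(I-\Pi_k)g\Pi_k$ together with the Kodaira spectral gap of \prettyref{cor: spectral gap Dirac}, while for the norm I will test against Bergman coherent states at a near-maximum of $|f|$ and invoke \prettyref{thm:Toeplitz expansion}.

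For \prettyref{eq: leading comp. Toeplit}, the task reduces to showing $\|(I-\Pi_k)g\Pi_k\|_{L^2\to L^2}=O(k^{-1/r})$. Given $s\in H^0(Y;F\otimes L^k)=\ker\bar\partial_k$, one has $\bar\partial_k(gs)=(\bar\partial g)s$. Because $Y$ is a Riemann surface, $\bar\partial_k$ vanishes on $\Omega^{0,1}$, so $\Box_k^1=\bar\partial_k\bar\partial_k^*$, and by \prettyref{cor: spectral gap Dirac} it is invertible for $k\gg 0$ with $\|(\Box_k^1)^{-1}\|=O(k^{-2/r})$. The element $u:=(I-\Pi_k)(gs)$ lies in the orthogonal complement of $H^0$, so by Hodge decomposition $u=\bar\partial_k^*w$ for a unique $w\in\Omega^{0,1}$, and taking $\bar\partial_k$ gives $\Box_k^1w=\bar\partial_k u=(\bar\partial g)s$, i.e. $w=(\Box_k^1)^{-1}(\bar\partial g)s$. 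The standard energy identity
\[
\|u\|^2=\langle\bar\partial_k\bar\partial_k^*w,w\rangle=\langle\Box_k^1w,w\rangle\leq\|(\bar\partial g)s\|\,\|w\|\leq Ck^{-2/r}\|\bar\partial g\|_\infty^2\|s\|^2
\]
then yields $\|(I-\Pi_k)g\Pi_k\|\leq Ck^{-1/r}$, from which $\|T_{f,k}T_{g,k}-T_{fg,k}\|\leq C\|f\|_\infty\|\bar\partial g\|_\infty k^{-1/r}$ follows by multiplying on the left by $\Pi_k f$.

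For \prettyref{eq: norm Toeplitz}, the upper bound $\|T_{f,k}\|\leq\|f\|_\infty$ is immediate from $T_{f,k}=\Pi_k f\Pi_k$. For the lower bound, fix $\varepsilon>0$, pick $y_0\in Y$ and a unit $u_0\in F_{y_0}$ with $|f(y_0)u_0|\geq\|f\|_\infty-\varepsilon$, and set $v_0:=f(y_0)u_0/|f(y_0)u_0|$. I would introduce unit coherent states $s_1,s_2\in H^0$ via
\[
s_i(y)=\alpha_i^{-1}\Pi_k(y,y_0)(u_i\otimes l_0^k),\qquad \alpha_i^2=\bigl\langle\Pi_k(y_0,y_0)(u_i\otimes l_0^k),\,u_i\otimes l_0^k\bigr\rangle,
\]
with $(u_1,u_2)=(u_0,v_0)$ and $l_0^k$ a unit frame of $L_{y_0}^k$. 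The reproducing property $\Pi_k s_i=s_i$ together with $\int\Pi_k(y_0,y)f(y)\Pi_k(y,y_0)\,dy=T_{f,k}(y_0,y_0)$ gives
\[
\langle T_{f,k}s_1,s_2\rangle=(\alpha_1\alpha_2)^{-1}\bigl\langle T_{f,k}(y_0,y_0)(u_0\otimes l_0^k),\,v_0\otimes l_0^k\bigr\rangle.
\]
Applying \prettyref{thm:Bergman kernel expansion} to each $\alpha_i^2$ and \prettyref{thm:Toeplitz expansion} to $T_{f,k}(y_0,y_0)$, the common positive leading factor $k^{2/r_{y_0}}c_0(y_0)$ cancels and the ratio tends to $\langle f(y_0)u_0,v_0\rangle=|f(y_0)u_0|\geq\|f\|_\infty-\varepsilon$. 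Since $\|T_{f,k}\|\geq|\langle T_{f,k}s_1,s_2\rangle|$ and $\varepsilon$ was arbitrary, the claim follows.

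The principal obstacle is the Hodge step in the composition estimate: its success relies critically on the fact that $Y$ is one-complex-dimensional, which forces $\Box_k^1=\bar\partial_k\bar\partial_k^*$ and allows a direct inversion via \prettyref{cor: spectral gap Dirac}. As \prettyref{rem: good remark} indicates, this mechanism is not available in higher dimensions under only semi-positivity, and the Donnelly counterexample cited there obstructs a naive extension.
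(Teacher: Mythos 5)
Your proposal is correct, but it proves both parts by genuinely different routes than the paper does, and it is worth recording the contrast. For the composition estimate~\prettyref{eq: leading comp. Toeplit} you exploit the algebraic identity $T_{f,k}T_{g,k}-T_{fg,k}=-\Pi_k f(I-\Pi_k)g\Pi_k$ and reduce the problem to bounding $\|(I-\Pi_k)g\Pi_k\|$ by inverting $\Box_k^1$ via the spectral gap of Corollary~\prettyref{cor: spectral gap Dirac}; this is precisely the ``small spectral gap'' functional-analytic mechanism alluded to in the introduction in connection with \cite[Theorem~1.4]{Hsiao-Marinescu2017}, and it correctly hinges on $\dim_{\mathbb C}Y=1$ (so that $\Box_k^1=\bar\partial_k\bar\partial_k^*$ and $\ker\bar\partial_k^\perp=\mathrm{im}\,\bar\partial_k^*$), a dependence your closing paragraph rightly flags. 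The paper instead proves a uniform pointwise estimate $\|[T_{f,k}T_{g,k}-T_{fg,k}](\cdot,y)\|_{L^2}=O(k^{-1/r})$ by localizing both kernels near $y$, rescaling by $\delta_{k^{-1/r_y}}$, and Taylor-expanding $f$ and $g$. Your route is shorter and avoids the kernel machinery, but the paper's kernel computation is the one that generalizes to produce all sub-leading terms and hence the generalized Toeplitz algebra on $Y_2$ discussed afterward; your argument gives only the leading order. For the norm formula~\prettyref{eq: norm Toeplitz} you build peak sections $s_i=\alpha_i^{-1}\Pi_k(\cdot,y_0)(u_i\otimes l_0^k)$ and read off $\langle T_{f,k}s_1,s_2\rangle\to|f(y_0)u_0|$ directly from the on-diagonal expansions of Theorems~\prettyref{thm:Bergman kernel expansion} and~\prettyref{thm:Toeplitz expansion}, using that the common positive scalar leading coefficient cancels; this works at an \emph{arbitrary} near-maximizing $y_0\in Y$ because those expansions are pointwise. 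The paper instead invokes the estimate of \cite[Theorem~7.4.2]{Ma-Marinescu} at points of $Y_2$ and uses density of $Y_2$ to approximate a maximizer lying in $Y\setminus Y_2$. Your peak-section construction is self-contained and removes the case split; the only implicit ingredient you should surface is that $\Pi_k(y_0,y_0)>0$ for $k\gg0$ (base-point freeness, Lemma~\prettyref{lem: uniform estimate bergman kernel}), which makes the normalizations $\alpha_i$ nonzero.
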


\begin{proof}
The first part of \prettyref{eq: norm Toeplitz} is similar to the
positive case. Firstly, $\left\Vert T_{f,k}\right\Vert \leq\left\Vert f\right\Vert _{\infty}$
is clear from the definition \prettyref{eq:standard Toeplitz}. For
the lower bound, let us consider $y\in Y_{2}$ where the curvature
is non-vanishing and $u\in F_{y}$, $|u|_{h^{F}}=1$. It follows from
the proof of \cite[Theorem 7.4.2]{Ma-Marinescu} (see also \cite[Proposition 5.2, (5.40), Remark 5.7]{Barron-Ma-Marinescu-Pinsonnault2014})
that 
\begin{equation}
|f\left(y\right)(u)|_{h^{F}}+O_{y,u}\left(k^{-1/2}\right)\leq\left\Vert T_{f,k}\right\Vert .\label{eq: lower bound Toeplitz norm 1}
\end{equation}
If $\left\Vert f\right\Vert _{\infty}=|f\left(y_{0}\right)(u_{0})|_{h^{F}}$
is attained at a point $y_{0}\in Y_{2}$, it follows immediately from
\prettyref{eq: lower bound Toeplitz norm 1} that 
\[
\left\Vert f\right\Vert _{\infty}+O\left(k^{-1/2}\right)\leq\left\Vert T_{f,k}\right\Vert ,
\]
so one obtains the lower bound. Next let $\left\Vert f\right\Vert _{\infty}=|f\left(y_{0}\right)(u_{0})|_{h^{F}}$
be attained at $y_{0}\in Y\setminus Y_{2}$, a vanishing point of
the curvature. As $Y\setminus Y_{2}\subset Y$ is open and dense one
may find for any $\varepsilon>0$ a point $y_{\varepsilon}\in Y\setminus Y_{2}$
and $u_{\varepsilon}\in F_{y_{\varepsilon}}$, $|u_{\varepsilon}|_{h^{F}}=1$,
with $\left\Vert f\right\Vert _{\infty}-\varepsilon\leq|f\left(y_{\varepsilon}\right)(u_{\varepsilon})|_{h^{F}}$.
Combined with \prettyref{eq: lower bound Toeplitz norm 1} this gives
\begin{align*}
\left\Vert f\right\Vert _{\infty}-\varepsilon+O_{\varepsilon}\left(k^{-1/2}\right) & \leq\left\Vert T_{f,k}\right\Vert ,\quad\textrm{and }\\
\left\Vert f\right\Vert _{\infty}-\varepsilon & \leq\liminf_{k\rightarrow\infty}\left\Vert T_{f,k}\right\Vert .
\end{align*}
Since $\varepsilon>0$ is arbitrary, this implies $\left\Vert f\right\Vert _{\infty}\leq\liminf_{k\rightarrow\infty}\left\Vert T_{f,k}\right\Vert $
proving the lower bound.


Next, to prove the composition expansion \prettyref{eq: leading comp. Toeplit}
it suffices to prove a uniform kernel estimate 
\[
\left\Vert \left[T_{f,k}T_{g,k}-T_{fg,k}\right]\left(.,y\right)\right\Vert _{L^{2}}=O\left(k^{-1/r}\right),\quad\forall y\in Y.
\]
To this end we again compute in geodesic chart centered at $y$ 
\begin{align*}
T_{f,k}T_{g,k}\left(.,0\right) & =\int_{Y\times Y}dy_{1}dy_{2}\,\Pi_{k}\left(.,y_{1}\right)f\left(y_{1}\right)\Pi_{k}\left(y_{1},y_{2}\right)g\left(y_{2}\right)\Pi_{k}\left(y_{2},0\right)\\
 & =O_{L^{2}}\left(k^{-\infty}\right)+\int_{B_{\varepsilon}\left(y_{2}\right)}dy_{1}\int_{B_{\varepsilon}\left(y\right)}dy_{2}\,\tilde{\Pi}_{k}\left(.,y_{1}\right)f\left(y_{1}\right)\tilde{\Pi}_{k}\left(y_{1},y_{2}\right)g\left(y_{2}\right)\tilde{\Pi}_{k}\left(y_{2},0\right)\\
 & =O_{L^{2}}\left(k^{-\infty}\right)+\int_{B_{\varepsilon}\left(y_{2}\right)}dy_{1}\int_{B_{\varepsilon}\left(y\right)}dy_{2}k^{6/r_{y}}\left\{ \Pi^{\boxdot}\left(k^{1/r_{y}}.,k^{1/r_{y}}y_{1}\right)\right.\\
 & \qquad\left.f\left(y_{1}\right)\Pi^{\boxdot}\left(k^{1/r_{y}}y_{1},k^{1/r_{y}}y_{2}\right)g\left(y_{2}\right)\Pi^{\boxdot}\left(k^{1/r_{y}}y_{2},0\right)\right\} \\
 & =O_{L^{2}}\left(k^{-\infty}\right)+\int_{k^{1/r_{y}}B_{\varepsilon}\left(y_{2}\right)}dy_{1}\int_{k^{1/r_{y}}B_{\varepsilon}\left(y\right)}dy_{2}k^{2/r_{y}}\left\{ \Pi^{\boxdot}\left(.,y_{1}\right)\right.\\
 & \qquad\left.f\left(y_{1}k^{-1/r_{y}}\right)\Pi^{\boxdot}\left(y_{1},y_{2}\right)g\left(y_{2}k^{-1/r_{y}}\right)\Pi^{\boxdot}\left(y_{2},0\right)\right\} \\
 & =O_{L^{2}}\left(k^{-1/r_{y}}\right)+\int_{k^{1/r_{y}}B_{\varepsilon}\left(y_{2}\right)}dy_{1}\int_{k^{1/r_{y}}B_{\varepsilon}\left(y\right)}dy_{2}k^{2/r_{y}}\left\{ \Pi^{\boxdot}\left(.,y_{1}\right)\right.\\
 & \qquad\left.\Pi^{\boxdot}\left(y_{1},y_{2}\right)fg\left(y_{2}k^{-1/r_{y}}\right)\Pi^{\boxdot}\left(y_{2},0\right)\right\} \\
 & =O_{L^{2}}\left(k^{-1/r_{y}}\right)+\int_{B_{\varepsilon}\left(y_{2}\right)}dy_{1}\int_{B_{\varepsilon}\left(y\right)}dy_{2}\,\tilde{\Pi}_{k}\left(.,y_{1}\right)\tilde{\Pi}_{k}\left(y_{1},y_{2}\right)fg\left(y_{2}\right)\tilde{\Pi}_{k}\left(y_{2},0\right)\\
 & =O_{L^{2}}\left(k^{-1/r_{y}}\right)+T_{fg,k}
\end{align*}
with all remainders being uniform in $y\in Y$. Above we have again
used the localization/rescaling properties \prettyref{eq:Bergman localization},
\prettyref{eq:Bergman kernel relation} as well as the first order
Taylor expansion $f\left(y_{1}k^{-1/r_{y}}\right)=f\left(y_{2}k^{-1/r_{y}}\right)+O_{\left\Vert f\right\Vert _{C^{1}}}\left(k^{-1/r_{y}}\right)$. 
\end{proof}
\begin{rem}
Similar to the previous remark \prettyref{rem: Recovering positive case},
we can recover the usual algebra properties of Toeplitz operators
when $f,g$ are compactly supported on the set $Y_{2}$ where the
curvature $R^{L}$ is positive. In particular we define a generalized
Toeplitz operator to be a sequence of operators $T_{k}:L^{2}(Y,F\otimes L^{k})\longrightarrow L^{2}(Y,F\otimes L^{k})$,
$k\in\mathbb{N}$, such that there exist $K\Subset Y_{2}$, $h_{j}\in C_{c}^{\infty}\left(K;\textrm{End}\left(F\right)\right)$,
$C_{j}>0$, $j=0,1,2,\ldots$ satisfying 
\begin{equation}
\Big\| T_{k}-\sum_{j=0}^{N}k^{-j}T_{h_{j},k}\Big\|\leqslant C_{N}\,k^{-N-1},\quad\forall N\in\mathbb{N}.\label{gen Toeplitz operator}
\end{equation}
Then this class is closed under composition and one may define a formal
star product on $C_{c}^{\infty}\left(Y_{2}\right)\left[\left[k^{-1}\right]\right]$
, via 
\begin{align*}
f\ast_{k^{-1}}g & =\sum_{j=0}^{\infty}C_{j}\left(f,g\right)k^{-j}\in C_{c}^{\infty}\left(Y_{2}\right)\left[\left[k^{-1}\right]\right]\quad\textrm{where}\\
T_{f,k}\circ T_{g,k} & \sim\sum_{j=0}^{\infty}T_{C_{j}\left(f,g\right)}k^{-j},
\end{align*}
(cf. \cite{Bordemann-Meinrenken-Schlichenmaier94,Charles2003,Ma-Marinescu2008}).
Furthermore 
\begin{align*}
T_{f,k}\circ T_{g,k} & =T_{fg,k}+O_{L^{2}\rightarrow L^{2}}\left(k^{-1}\right)\\{}
[T_{f,k}\,,T_{g,k}] & =\frac{i}{k}T_{\{f,g\},k}+O_{L^{2}\rightarrow L^{2}}(k^{-2})
\end{align*}
$\forall f,g\in C_{c}^{\infty}\left(Y_{2};\textrm{End}\left(F\right)\right)$,
with $\{\cdot,\cdot\}$ being the Poisson bracket on the Kähler manifold
$(Y_{2},iR^{L})$. 
\end{rem}

Finally we address the asymptotics of the spectral measure of the
Toeplitz operator \prettyref{eq:standard Toeplitz}, called Szeg\H{o}-type
limit formulas \cite{Boutet-Guillemin81,Guillemin79}. The spectral
measure of $T_{f,k}$ is defined via 
\begin{equation}
u_{f,k}\left(s\right)\coloneqq\sum_{\lambda\in\textrm{Spec}\left(T_{f,k}\right)}\delta\left(s-\lambda\right)\in\mathcal{S}'\left(\mathbb{R}_{s}\right).\label{eq:spectral density Toeplitz}
\end{equation}
We now have the following asymptotic formula. 
\begin{thm}
The spectral measure \prettyref{eq:spectral density Toeplitz} satisfies
\begin{equation}
u_{f,k}\sim\frac{k}{2\pi}f_{*}R^{L}\label{eq: spectral distribution Toeplitz}
\end{equation}
in the distributional sense as $k\rightarrow\infty$. 
\end{thm}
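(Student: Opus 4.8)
The plan is to test the spectral measure against functions, reduce to traces of powers of the Toeplitz operator, and evaluate those via the Bergman kernel expansion. First I would record that $\left\langle u_{f,k},\psi\right\rangle =\textrm{Tr}\,\psi\left(T_{f,k}\right)$ for a test function $\psi$, where -- as the ambient space $\mathcal{S}'\left(\mathbb{R}_{s}\right)$ of \prettyref{eq:spectral density Toeplitz} presupposes -- $f$ is taken self-adjoint, so that each $T_{f,k}$ is self-adjoint with $\textrm{Spec}\left(T_{f,k}\right)\subset\left[-\left\Vert f\right\Vert _{\infty},\left\Vert f\right\Vert _{\infty}\right]$ by \prettyref{eq: norm Toeplitz}. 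The positive measures $\frac{1}{k}u_{f,k}$ then have uniformly compact support and total mass $\frac{1}{k}\dim H^{0}\left(Y;F\otimes L^{k}\right)=O\left(1\right)$ by \prettyref{eq: hol. Euler characteristic}, so a method-of-moments argument (Weierstrass approximation on the fixed interval $\left[-\left\Vert f\right\Vert _{\infty},\left\Vert f\right\Vert _{\infty}\right]$) reduces the statement to computing $\lim_{k\to\infty}\frac{1}{k}\textrm{Tr}\,T_{f,k}^{m}$ for every $m\in\mathbb{N}_{0}$ and checking that it equals the $m$-th moment of the measure on the right-hand side of \prettyref{eq: spectral distribution Toeplitz}, namely $\frac{1}{2\pi}\int_{Y}\textrm{tr}_{F_{y}}\left[f^{m}\left(y\right)\right]\tau^{L}\left(y\right)d\mu_{Y}=\int_{Y}\textrm{tr}_{F_{y}}\left[f^{m}\left(y\right)\right]\frac{i}{2\pi}R^{L}$, using $\frac{i}{2\pi}R^{L}=\frac{1}{2\pi}\tau^{L}d\mu_{Y}$.

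For the moment computation I would iterate the composition asymptotics \prettyref{eq: leading comp. Toeplit} to obtain $T_{f,k}^{m}=T_{f^{m},k}+O_{L^{2}\to L^{2}}\left(k^{-1/r}\right)$ for each fixed $m$. Since this error acts on the finite-dimensional space $H^{0}\left(Y;F\otimes L^{k}\right)$ of dimension $O\left(k\right)$, its trace is $O\left(k^{1-1/r}\right)=o\left(k\right)$, whence $\textrm{Tr}\,T_{f,k}^{m}=\textrm{Tr}\,T_{f^{m},k}+o\left(k\right)$. Using $\Pi_{k}^{2}=\Pi_{k}$ and cyclicity of the trace, $\textrm{Tr}\,T_{f^{m},k}=\textrm{Tr}\left(\Pi_{k}f^{m}\right)=\int_{Y}\textrm{tr}_{F_{y}}\left[\Pi_{k}\left(y,y\right)f^{m}\left(y\right)\right]d\mu_{Y}\left(y\right)$, so there only remains a passage to the limit under this integral. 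Here I would invoke the pointwise Bergman kernel expansion \prettyref{thm:Bergman kernel expansion} together with \prettyref{rem: Recovering positive case}: for every fixed $y$ one has $\frac{1}{k}\Pi_{k}\left(y,y\right)\to\frac{\tau^{L}\left(y\right)}{2\pi}\textrm{Id}_{F_{y}}$, the limit being the leading coefficient $c_{0}=\frac{1}{2\pi}\tau^{L}$ on $Y_{2}$ and equal to $0=\frac{1}{2\pi}\tau^{L}$ on $Y\setminus Y_{2}$, where $r_{y}\ge4$ makes $\Pi_{k}\left(y,y\right)=O\left(k^{2/r_{y}}\right)$ sublinear in $k$. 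The uniform bound $\Pi_{k}\left(y,y\right)\le Ck$ from \prettyref{lem: uniform estimate bergman kernel} provides an integrable majorant, so dominated convergence yields the required moment limit and with it \prettyref{eq: spectral distribution Toeplitz}.

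I do not expect a deep obstacle, but the step requiring the most care is the upgrade of the operator-norm error $O\left(k^{-1/r}\right)$ in $T_{f,k}^{m}-T_{f^{m},k}$ to an $o\left(k\right)$ error at the level of traces -- this is exactly where the Riemann--Roch dimension count $\dim H^{0}\left(Y;F\otimes L^{k}\right)=O\left(k\right)$ is used. The second delicate point is verifying the hypotheses of dominated convergence across the degeneracy locus $Y\setminus Y_{2}$; this is taken care of by combining the pointwise sublinearity of $\Pi_{k}\left(y,y\right)$ there with the global estimate of \prettyref{lem: uniform estimate bergman kernel}, and in particular one never needs that $Y\setminus Y_{2}$ have measure zero (although it does). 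A final bookkeeping point is to keep the normalization $\frac{i}{2\pi}R^{L}=\frac{1}{2\pi}\tau^{L}d\mu_{Y}$ straight so that the limit is recognised as $\frac{1}{2\pi}f_{*}R^{L}$ and the asymptotics take the form $u_{f,k}\sim\frac{k}{2\pi}f_{*}R^{L}$.
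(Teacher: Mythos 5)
Your proof is correct and follows the same overall route as the paper — localize the spectrum via \prettyref{eq: norm Toeplitz}, reduce to moments by polynomial approximation on a fixed interval, and transfer $\textrm{Tr}\,T_{f,k}^{m}$ to $\textrm{Tr}\,T_{f^{m},k}$ using the composition asymptotics \prettyref{eq: leading comp. Toeplit} together with the Riemann--Roch dimension count \prettyref{eq: hol. Euler characteristic}. Where you diverge is the evaluation of the remaining trace asymptotics. The paper works with the on-diagonal Toeplitz kernel of \prettyref{thm:Toeplitz expansion}, uses that its expansion is uniform on compact subsets of $Y_{2}$, and then exhausts $Y_{2}$ by compacts $K_{j}$ with $\mu\left(Y\setminus K_{j}\right)\to0$, which leans on the fact that $Y_{\geq3}$ has measure zero. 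You instead cyclically rearrange to $\textrm{Tr}\left(\Pi_{k}f^{m}\right)=\int_{Y}\textrm{tr}\left[\Pi_{k}\left(y,y\right)f^{m}\left(y\right)\right]$ and apply dominated convergence directly on $Y$, using the uniform bound $\Pi_{k}\left(y,y\right)\leq C k$ of \prettyref{lem: uniform estimate bergman kernel} as majorant and the observation that $\frac{1}{k}\Pi_{k}\left(y,y\right)\to\frac{\tau^{L}\left(y\right)}{2\pi}\,\mathrm{Id}_{F_{y}}$ pointwise on all of $Y$ (the limit being zero on $Y\setminus Y_{2}$ by sublinear growth $O\left(k^{2/r_{y}}\right)$). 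This variant is a bit cleaner: the pointwise limit is continuous and identified globally, and — as you correctly point out — the measure-zero property of the degeneracy locus is never invoked, whereas it is load-bearing in the paper's exhaustion argument.
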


\begin{proof}
Since $\textrm{Spec}\left(T_{f,k}\right)\subset\big[-\left\Vert f\right\Vert _{\infty},\left\Vert f\right\Vert _{\infty}\big]$
by \prettyref{eq: norm Toeplitz}, the equation \prettyref{eq: spectral distribution Toeplitz}
is equivalent to 
\[
\textrm{tr }\varphi\left(T_{f,k}\right)=\sum_{\lambda\in\textrm{Spec}
\left(T_{f,k}\right)}\varphi\left(\lambda\right)
\sim\frac{k}{2\pi}\int_{Y}\left[\varphi\circ f\right]R^{L},
\]
for all $\varphi\in C_{c}^{\infty}\big(\!-\left\Vert f\right
\Vert _{\infty}-1,\left\Vert f\right\Vert _{\infty}+1\big)$.
We first prove that the trace of a Toeplitz operator \prettyref{eq:standard Toeplitz}
satisfies the asymptotics 
\begin{equation}
\textrm{tr }T_{f,k}\sim\frac{k}{2\pi}\int_{Y}fR^{L}.
\label{eq: trace asymp. of Toeplitz operator}
\end{equation}
To this end first note that the expansion of \prettyref{thm:Toeplitz expansion}
is uniform on compact subsets $K\subset Y_{2}$ while 
$\left|T_{f,k}\left(y,y\right)\right|=O\left(k\right)$
uniformly in $y\in Y$ as in \prettyref{lem: uniform estimate bergman kernel}.
Further, as with \cite[Proposition 7]{Marinescu-Savale18},
$Y_{\geq3}$ is a closed subset of a hypersurface and has measure zero. Then with
$K_{j}\subset Y_{2}$, $j=1,2,\ldots$\,, being a sequence of compact
subsets satisfying $K_{j}\subset K_{j+1}$, 
$\cap_{j=1}^{\infty}K_{j}=Y_{\geq3}$,
one may then breakup the trace integral 
\begin{align*}
\frac{1}{k}\textrm{tr}T_{f,k} & =
\frac{1}{k}\int_{K_{j}}\textrm{tr }T_{f,k}\left(y,y\right)+
\frac{1}{k}\int_{Y\setminus K_{j}}\textrm{tr }T_{f,k}
\left(y,y\right)\\
 & =\frac{1}{2\pi}\int_{K_{j}}fR^{L}+
 O_{j}\left(\frac{1}{k}\right)+O\left(\mu\left(Y\setminus K_{j}\right)\right)
\end{align*}
from which \prettyref{eq: trace asymp. of Toeplitz operator} 
follows on knowing $\frac{1}{2\pi}\int_{K_{j}}fR^{L}
\rightarrow\frac{1}{2\pi}\int_{Y}fR^{L}$,
$\mu\left(Y\setminus K_{j}\right)\rightarrow0$ as $j\rightarrow\infty.$

Following this one has 
\[
\textrm{tr }T_{f,k}^{l}=\textrm{tr }T_{f^{l},k}+O_{f}\left(k^{1-1/r}\right)
\]
for all $l\in\mathbb{N}$ from \prettyref{eq: hol. Euler characteristic},
\prettyref{eq: leading comp. Toeplit}. A polynomial approximation
of the compactly supported function $\varphi\in C_{c}^{\infty}\big(\!-\left\Vert f\right\Vert _{\infty}-1,\left\Vert f\right\Vert _{\infty}+1\big)$
then gives 
\begin{align*}
\textrm{tr }\varphi\left(T_{f,k}\right) & =\textrm{tr }T_{\varphi\circ f,k}+o\left(k\right)\\
 & =\frac{k}{2\pi}\int_{Y}\left[\varphi\circ f\right]R^{L}+o\left(k\right)
\end{align*}
by \prettyref{eq: trace asymp. of Toeplitz operator} as required. 
\end{proof}
The analogous result for projective manifolds endowed with the restriction
of the hyperplane bundle was originally proved in \cite[Theorem 13.13]{Boutet-Guillemin81},
\cite{Guillemin79} and for arbitrary positive line bundles in \cite{Berndtsson2003},
see also \cite{Lindholm2001}. In \cite[Theorem 1.6]{Hsiao-Marinescu2017}
the asymptotics \prettyref{eq: trace asymp. of Toeplitz operator}
are proved for a semi-classical spectral function of the Kodaira Laplacian
on an arbitrary manifold.

\subsection{Branched coverings}

We now consider Toeplitz operators and their composition in a particular
case of semipositive line bundles. Namely, those that arise from pullbacks
along branched coverings. Here $f:Y\rightarrow Y_{0}$ is a branched
covering of a Riemann surface $Y_{0}$ with branch points $\left\{ y_{1},\ldots,y_{M}\right\} \subset Y$.
The Hermitian holomorphic line bundle on $Y$ is pulled back $\left(L,h^{L}\right)=\left(f^{*}L_{0},f^{*}h^{L_{0}}\right)$
from one on $Y_{0}$. If $\left(L_{0},h^{L_{0}}\right)$ is assumed
positive, then $\left(L,h^{L}\right)$ is semi-positive with curvature
vanishing at the branch points. In particular, near a branch point
$y\in Y$ of local degree $\frac{r}{2}$ one may find holomorphic
geodesic coordinate such that the curvature is given by 
$R^{L}=\frac{r^{2}}{4}\left(z\bar{z}\right)^{r/2-1}
R_{f\left(y\right)}^{L_{0}}+O\left(y^{r-1}\right)$.
We denote for simplicity $R_0:=R_{f\left(y\right)}^{L_{0}}$.

The leading term of \prettyref{eq:Bergmankernelexpansion} is given
by the model Bergman kernel $\Pi^{\boxdot_{0}}\left(0,0\right)$ of
the operator 
\begin{align}
\boxdot_{0} & =bb^{\dagger},\quad\textrm{ for }\label{eq:model operator branched covering}\\
b^{\dagger} & =2\partial_{\bar{z}}+a,\nonumber \\
a & =\frac{r}{4}z\left(z\bar{z}\right)^{r/2-1}R_{0}.\label{eq:model operator branched covering II}
\end{align}
We first compute this model Bergman kernel.
\begin{lem}
The model Bergman kernel corresponding to the model operator \prettyref{eq:model operator branched covering}
at a branch point is given by 
\begin{align}
\Pi^{\boxdot_{0}}\left(z,z'\right) & =\frac{re^{-2\left[\Phi\left(z\right)+\Phi\left(z'\right)\right]}R_{0}^{\frac{2}{r}}}{2\pi}G\left(R_{0}^{\frac{2}{r}}z\overline{z'}\right)\quad\textrm{where }\label{eq:model Bergman kernel}\\
\Phi\left(z\right) & \coloneqq\frac{1}{4}\left(z\bar{z}\right)^{r/2}R_{0}\quad\textrm{ and}\label{eq:Potential}\\
G\left(x\right) & \coloneqq\sum_{\alpha=0}^{\frac{r}{2}-1}\frac{x^{\alpha}}{\Gamma\left(\frac{2\left(\alpha+1\right)}{r}\right)}+x^{\frac{r}{2}-1}e^{x^{\frac{r}{2}}}\left[\sum_{\alpha=0}^{\frac{r}{2}-2}\frac{\Gamma\left(\frac{2\left(\alpha+1\right)}{r}\right)-\Gamma\left(\frac{2\left(\alpha+1\right)}{r},x^{\frac{r}{2}}\right)}{\Gamma\left(\frac{2\left(\alpha+1\right)}{r}\right)/\left(\frac{2\left(\alpha+1\right)}{r}-1\right)}\right]\label{eq:function G}
\end{align}
is given in terms of the incomplete gamma function. 
\end{lem}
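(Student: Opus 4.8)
The plan is to diagonalize the model operator explicitly and then evaluate the resulting reproducing kernel in closed form. Since $\boxdot_{0}=bb^{\dagger}$ with $b=\left(b^{\dagger}\right)^{*}$, one has $\ker\boxdot_{0}=\ker b^{\dagger}$, so the first step is to solve $b^{\dagger}u=0$. With $\Phi$ as in \prettyref{eq:Potential}, a direct differentiation shows that the potential appearing in \prettyref{eq:model operator branched covering II} satisfies $a=2\partial_{\bar{z}}\Phi$, whence
\[
b^{\dagger}=2\partial_{\bar{z}}+a=e^{-\Phi}\circ\left(2\partial_{\bar{z}}\right)\circ e^{\Phi}.
\]
Therefore $b^{\dagger}u=0$ if and only if $u=e^{-\Phi}h$ with $h$ entire, and, with the inner product fixed in \prettyref{sec:Model-operators}, finiteness of $\lVert u\rVert$ becomes the weighted condition $\int_{\mathbb{C}}|h(z)|^{2}e^{-R_{0}\left(z\bar{z}\right)^{r/2}}\,dA(z)<\infty$, a radially symmetric Gaussian-type weight. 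Consequently $\ker\boxdot_{0}$ is identified with a Fock-type space of holomorphic functions, and $\Pi^{\boxdot_{0}}\left(z,z'\right)$ equals the exponential prefactor $e^{-2\left[\Phi\left(z\right)+\Phi\left(z'\right)\right]}$ (coming from the $e^{-\Phi}$ factor together with the symmetric half-weight normalization) times the Bergman kernel $\mathcal{K}\left(z,z'\right)$ of that weighted space.

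Because the weight depends only on $|z|$, the monomials $z^{\alpha}$, $\alpha\in\mathbb{N}_{0}$, form an orthogonal basis, so $\mathcal{K}\left(z,z'\right)=\sum_{\alpha\geq0}\lVert z^{\alpha}\rVert^{-2}\,z^{\alpha}\overline{z'}^{\alpha}$. Passing to polar coordinates and substituting a variable proportional to $|z|^{r}$ turns $\lVert z^{\alpha}\rVert^{2}$ into a Gamma integral, giving $\lVert z^{\alpha}\rVert^{2}=\mathrm{const}\cdot R_{0}^{-2\left(\alpha+1\right)/r}\,\Gamma\bigl(\tfrac{2\left(\alpha+1\right)}{r}\bigr)$. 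This both produces the Gamma normalizers occurring in \prettyref{eq:function G} and isolates the scaling variable $x=R_{0}^{2/r}z\overline{z'}$, so that it remains to evaluate $S\left(x\right)\coloneqq\sum_{\alpha\geq0}x^{\alpha}/\Gamma\bigl(\tfrac{2\left(\alpha+1\right)}{r}\bigr)$ in closed form and to check that this gives $\tfrac{rR_{0}^{2/r}}{2\pi}\,e^{-2\left[\Phi\left(z\right)+\Phi\left(z'\right)\right]}G\bigl(R_{0}^{2/r}z\overline{z'}\bigr)$.

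The core of the argument is the summation of $S$. I would split the index by its residue modulo $r/2$: writing $\alpha=j+\tfrac{r}{2}\beta$ with $0\leq j\leq\tfrac{r}{2}-1$ and $\beta\geq0$, the Gamma argument becomes $c_{j}+\beta$ with $c_{j}\coloneqq\tfrac{2\left(j+1\right)}{r}\in\left(0,1\right]$, so the $j$-th residue class contributes $x^{j}\sum_{\beta\geq0}\bigl(x^{r/2}\bigr)^{\beta}/\Gamma\left(c_{j}+\beta\right)=\tfrac{x^{j}}{\Gamma\left(c_{j}\right)}\,{}_{1}F_{1}\bigl(1;c_{j};x^{r/2}\bigr)$. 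The Kummer function ${}_{1}F_{1}\left(1;c;\cdot\right)$ is elementary: for $c\neq1$ one has $\sum_{\beta\geq0}u^{\beta}/\Gamma\left(c+\beta\right)=\tfrac{1}{\Gamma\left(c\right)}+\tfrac{u^{1-c}e^{u}\gamma\left(c,u\right)}{\Gamma\left(c\right)}$ (equivalently the recursion $u\,{}_{1}F_{1}\left(1;c+1;u\right)=c\bigl[{}_{1}F_{1}\left(1;c;u\right)-1\bigr]$), which brings in the lower incomplete Gamma function $\gamma\left(c,u\right)=\Gamma\left(c\right)-\Gamma\left(c,u\right)$, while the top class $j=\tfrac{r}{2}-1$, where $c_{j}=1$, degenerates to $\sum_{\beta\geq0}u^{\beta}/\beta!=e^{u}$ and is the source of the $x^{r/2-1}e^{x^{r/2}}$ factor. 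Using $x^{j}\bigl(x^{r/2}\bigr)^{1-c_{j}}=x^{r/2-1}$ to consolidate the powers, the $\beta=0$ contributions assemble into the finite polynomial $\sum_{\alpha=0}^{r/2-1}x^{\alpha}/\Gamma\left(c_{\alpha}\right)$ and the remaining contributions of the non-top classes into the incomplete-Gamma sum, producing \prettyref{eq:function G}; multiplying back by $e^{-2\left[\Phi\left(z\right)+\Phi\left(z'\right)\right]}$ yields \prettyref{eq:model Bergman kernel}. As a consistency check one should verify that for $r=2$ (so $G\equiv1$) the formula reproduces the Bargmann kernel of the model Landau Laplacian.

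The step I expect to be the main obstacle is precisely this last summation: the residue-class bookkeeping must be carried through together with the confluent hypergeometric and incomplete Gamma identities so that the exact shape of \prettyref{eq:function G} --- in particular the denominator $\Gamma\left(c_{\alpha}\right)/\left(c_{\alpha}-1\right)$ and the truncation of the second sum at $\alpha=\tfrac{r}{2}-2$ --- comes out correctly, and all rearrangements of the (absolutely convergent) series must be justified. A subsidiary point requiring care is pinning down the normalization conventions for $\Pi^{g^{V},R^{V},J^{V}}$ from \prettyref{sec:Model-operators}, so that the overall constant $rR_{0}^{2/r}/2\pi$ and the symmetric weight $e^{-2\left[\Phi\left(z\right)+\Phi\left(z'\right)\right]}$ appear exactly as stated.
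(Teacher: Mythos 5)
Your strategy is correct at the structural level and follows the same opening moves as the paper: factor $b^{\dagger}=e^{-\Phi}\circ\bigl(2\partial_{\bar{z}}\bigr)\circ e^{\Phi}$, so that $\ker\boxdot_{0}=\ker b^{\dagger}$ is $e^{-\Phi}$ times a weighted Fock space with radial weight, and the monomials $z^{\alpha}$ give the orthogonal basis whose normalizers $\|z^{\alpha}\|^{2}=\tfrac{2\pi}{r}R_{0}^{-2(\alpha+1)/r}\Gamma\bigl(\tfrac{2(\alpha+1)}{r}\bigr)$ produce both the Gamma factors and the scaling variable $x=R_{0}^{2/r}z\overline{z'}$. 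This is exactly how the paper obtains its display \prettyref{eq:Bergman kernel orthogonal basis}. Where you and the paper genuinely diverge is in summing $G(x)=\sum_{\alpha\geq 0}x^{\alpha}/\Gamma\bigl(\tfrac{2(\alpha+1)}{r}\bigr)$: the paper splits off the tail $F_{0}(y)=\sum_{\alpha\geq r/2}y^{(\alpha+1)/s-1}/\Gamma((\alpha+1)/s)$, $s=r/2$, derives a first-order linear ODE for it, and solves with an integrating factor; you decompose $\alpha=j+\tfrac{r}{2}\beta$ by residue class modulo $r/2$ and apply the Kummer identity $\sum_{\beta\geq 0}u^{\beta}/\Gamma(c+\beta)=\tfrac{1}{\Gamma(c)}+\tfrac{u^{1-c}e^{u}\gamma(c,u)}{\Gamma(c)}$. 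Both are legitimate and both ultimately reduce to the same incomplete-Gamma integral $\int_{0}^{y}t^{c-1}e^{-t}\,dt$; the ODE route produces it as the variation-of-constants integral, your route packages it in a standard ${}_{1}F_{1}$ identity.

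Your closing worry — that you do not see how to make the denominator $\Gamma\bigl(\tfrac{2(\alpha+1)}{r}\bigr)/\bigl(\tfrac{2(\alpha+1)}{r}-1\bigr)$ and the $\alpha=\tfrac{r}{2}-2$ truncation come out — is not a defect of your approach; it is pointing at a genuine error in the stated formula \prettyref{eq:function G}. Carried through honestly, your residue-class computation yields
\[
G(x)=\sum_{\alpha=0}^{\frac{r}{2}-1}\frac{x^{\alpha}}{\Gamma\!\left(\tfrac{2(\alpha+1)}{r}\right)}
\;+\;x^{\frac{r}{2}-1}e^{x^{r/2}}\sum_{\alpha=0}^{\frac{r}{2}-1}\frac{\Gamma\!\left(\tfrac{2(\alpha+1)}{r}\right)-\Gamma\!\left(\tfrac{2(\alpha+1)}{r},x^{r/2}\right)}{\Gamma\!\left(\tfrac{2(\alpha+1)}{r}\right)},
\]
using $x^{j}\bigl(x^{r/2}\bigr)^{1-c_{j}}=x^{r/2-1}$, with \emph{no} factor $\bigl(\tfrac{2(\alpha+1)}{r}-1\bigr)$. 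This disagrees with \prettyref{eq:function G}, and the disagreement is not cosmetic: for $r=4$ the bracketed term in \prettyref{eq:function G} is manifestly negative for $x>0$ (its only summand carries $c_{0}-1=-\tfrac{1}{2}$), while it is supposed to equal the positive tail $\sum_{\alpha\geq 2}x^{\alpha}/\Gamma\bigl(\tfrac{\alpha+1}{2}\bigr)$. Tracing back through the paper's ODE route, the slip is in the displayed ODE for $F_{0}$: the index shift $\alpha\mapsto\alpha-s$ together with $\Gamma(a)=(a-1)\Gamma(a-1)$ gives $F_{0}'=F_{0}+\sum_{\alpha=0}^{s-1}\tfrac{y^{(\alpha+1)/s-1}}{\Gamma((\alpha+1)/s)}$, not $F_{0}'=F_{0}+\sum_{\alpha=0}^{s-2}\bigl(\tfrac{\alpha+1}{s}-1\bigr)\tfrac{y^{(\alpha+1)/s-1}}{\Gamma((\alpha+1)/s)}$, and solving the corrected ODE reproduces precisely your expression. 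So your route, besides being a valid alternative, acts as an independent check that catches a sign/coefficient error in the claimed closed form. Two smaller points to tighten: the Fock-space weight induced by $u=e^{-\Phi}h$ is $e^{-2\Phi}=e^{-\frac{1}{2}R_{0}(z\bar{z})^{r/2}}$, not $e^{-R_{0}(z\bar{z})^{r/2}}$ as you wrote (factor of two in the exponent; this interacts with the $e^{-2[\Phi(z)+\Phi(z')]}$ prefactor in the lemma, which from the orthonormal-basis sum naturally comes out as $e^{-[\Phi(z)+\Phi(z')]}$, so the paper's normalization also deserves scrutiny), and your identity $\sum_{\beta\geq 0}u^{\beta}/\Gamma(c+\beta)=\tfrac{1}{\Gamma(c)}+\tfrac{u^{1-c}e^{u}\gamma(c,u)}{\Gamma(c)}$ in fact holds at $c=1$ as well, where it reduces to $e^{u}$, so the top residue class need not be treated as a special case.
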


\begin{proof}
From the formulas \prettyref{eq:model operator branched covering II},
an orthonormal basis for $\textrm{ker}\left(\boxdot_{0}\right)$ is
easily found to be
\begin{align*}
s_{\alpha} & \coloneqq\left(\frac{1}{2\pi}\frac{r}{\Gamma\left(\frac{2\left(\alpha+1\right)}{r}\right)}R_{0}^{\frac{2\left(\alpha+1\right)}{r}}\right)^{1/2}z^{\alpha}e^{-\Phi},\quad\alpha\in\mathbb{N}_{0},\\
\textrm{with}\quad\Phi & \coloneqq\frac{1}{4}\left(z\bar{z}\right)^{r/2}R_{0}.
\end{align*}
From here the model Bergman kernel is computed 
\begin{align}
\Pi^{\boxdot_{0}}\left(z,z'\right) & =\sum_{\alpha\in\mathbb{N}_{0}}s_{\alpha}\left(z\right)\overline{s_{\alpha}\left(z'\right)}\nonumber \\
 & =\frac{1}{2\pi}\sum_{\alpha\in\mathbb{N}_{0}}\frac{r}{\Gamma\left(\frac{2\left(\alpha+1\right)}{r}\right)}R_{0}^{\frac{2\left(\alpha+1\right)}{r}}\left(z\overline{z'}\right)^{\alpha}e^{-2\Phi}.\label{eq:Bergman kernel orthogonal basis}
\end{align}
To compute the above in a closed form, consider the series
\begin{align*}
F\left(y\right) & \coloneqq\sum_{\alpha=0}^{\infty}\frac{y^{\frac{\alpha+1}{s}-1}}{\Gamma\left(\frac{\alpha+1}{s}\right)}\\
 & =\sum_{\alpha=0}^{s-1}\frac{y^{\frac{\alpha+1}{s}-1}}{\Gamma\left(\frac{\alpha+1}{s}\right)}+\underbrace{\sum_{\alpha=s}^{\infty}\frac{y^{\frac{\alpha+1}{s}-1}}{\Gamma\left(\frac{\alpha+1}{s}\right)}}_{F_{0}\left(y\right)\coloneqq},
\end{align*}
for $s=\frac{r}{2}$. Differentiating the second term in the series
gives $F_{0}'\left(y\right)=F_{0}\left(y\right)+\sum_{\alpha=0}^{s-2}\left(\frac{\alpha+1}{s}-1\right)\frac{y^{\frac{\alpha+1}{s}-1}}{\Gamma\left(\frac{\alpha+1}{s}\right)}$.
Which is an ODE that can be solved with the initial condition $F_{0}\left(0\right)=0$
to give 
\begin{align}
F_{0}\left(y\right) & =\sum_{\alpha=s}^{\infty}\frac{y^{\frac{\alpha+1}{s}-1}}{\Gamma\left(\frac{\alpha+1}{s}\right)}=e^{y}\left[\sum_{\alpha=0}^{s-2}\frac{\Gamma\left(\frac{\alpha+1}{s}\right)-\Gamma\left(\frac{\alpha+1}{s},y\right)}{\Gamma\left(\frac{\alpha+1}{s}\right)/\left(\frac{\alpha+1}{s}-1\right)}\right]\nonumber \\
\textrm{in terms of }\Gamma\left(a,z\right) & \coloneqq\int_{z}^{\infty}t^{a-1}e^{-t}dt,\quad\textrm{Re}\left(z\right)>0,\label{eq:important series}
\end{align}
the incomplete gamma function. Thus in particular we have computed
$F\left(y\right)\coloneqq y^{\frac{1}{s}-1}G\left(y^{\frac{1}{s}}\right)$
\prettyref{eq:function G}. Finally noting from \prettyref{eq:Bergman kernel orthogonal basis}
that 
\[
\Pi^{\boxdot_{0}}\left(z,z'\right)=\frac{re^{-2\Phi}R_{0}^{\frac{2}{r}}}{2\pi}x^{s-1}F\left(x^{s}\right),
\]
for $x=R_{0}^{\frac{2}{r}}z\overline{z'}$, completes the proof.
\end{proof}
This gives the first term of the expansion
\[
c_{0}\left(y\right)=\Pi^{\boxdot_{0}}\left(0,0\right)=\frac{1}{2\pi}\frac{r}{\Gamma\left(\frac{2}{r}\right)}R_{0}^{\frac{2}{r}}
\]
at the vanishing/branch point $y$ in this example.

\section{\label{subsec:Random-sections} Random sections}

In this section we generalize the results of \cite{Shiffman-Zelditch99}
to the semi-positive case considered here. Let us consider Hermitian
holomorphic line bundles $(L,h^{L})$ and $(F,h^{F})$ on a compact
Riemann surface $Y$. To state the result first note that the natural
metric on $H^{0}\left(Y;F\otimes L^{k}\right)$ arising from $g^{TY}$,
$h^{F}$and $h^{L}$ gives rise to a probability density $\mu_{k}$
on the sphere 
\[
SH^{0}\left(Y;F\otimes L^{k}\right)\coloneqq\left\{ s\in H^{0}\left(Y;F\otimes L^{k}\right)|\left\Vert s\right\Vert =1\right\} ,
\]
of finite dimension $\chi\left(Y;F\otimes L^{k}\right)-1$ \prettyref{eq: hol. Euler characteristic}.
We now define the product probability space $\left(\Omega,\mu\right)\coloneqq\left(\Pi_{k=1}^{\infty}SH^{0}\left(Y;F\otimes L^{k}\right),\Pi_{k=1}^{\infty}\mu_{k}\right)$.
To a random sequence of sections $s=\left(s_{k}\right)_{k\in\mathbb{N}}\in\Omega$
given by this probability density, we then associate the random sequence
of zero divisors $Z_{s_{k}}=\left\{ s_{k}=0\right\} $ and view it
as a random sequence of currents of integration in $\Omega_{0,0}\left(Y\right)$.
We now have the following. 
\begin{thm}
\label{thm: random section} Let $(L,h^{L})$ and $(F,h^{F})$ be
Hermitian holomorphic line bundles on a compact Riemann surface $Y$
and assume that $(L,h^{L})$ is semi-positive line bundle and its
curvature $R^{L}$ vanishes to finite order at any point. Then for
$\mu$-almost all $s=\left(s_{k}\right)_{k\in\mathbb{N}}\in\Omega$,
the sequence of currents 
\[
\frac{1}{k}Z_{s_{k}}\rightharpoonup\frac{i}{2\pi}R^{L}
\]
converges weakly to the semi-positive curvature form. 
\end{thm}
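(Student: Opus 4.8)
The plan is to adapt the Shiffman--Zelditch scheme \cite{Shiffman-Zelditch99}, substituting the uniform Bergman expansion available in the positive case with the two-sided uniform bounds of \prettyref{lem: uniform estimate bergman kernel}. Since $F$ is a line bundle, so is $F\otimes L^{k}$, and a holomorphic section $s_{k}\not\equiv0$ (which holds $\mu_{k}$-a.s.) has exactly $\deg(F\otimes L^{k})=k\deg L+\deg F$ zeros counted with multiplicity. I would start from the Poincar\'e--Lelong formula, giving as currents on $Y$
\[
\frac{1}{k}Z_{s_{k}}=\frac{i}{2\pi}R^{L}+\frac{i}{2\pi k}R^{F}+\frac{i}{\pi k}\partial\bar{\partial}\ln\big|s_{k}\big|_{h^{F\otimes L^{k}}}.
\]
Pairing with a fixed $\phi\in C^{\infty}(Y)$ and integrating by parts on the closed surface $Y$, the middle term contributes $O(k^{-1})$, so it suffices to prove $\frac1k\int_{Y}\ln|s_{k}|_{h^{F\otimes L^{k}}}\,\frac{i}{\pi}\partial\bar{\partial}\phi\to0$ for $\mu$-a.e.\ $s=(s_{k})_{k}$. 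Expanding the random unit section in an $L^{2}$-orthonormal basis $\{S_{j}^{k}\}$ of $H^{0}(Y;F\otimes L^{k})$ as $s_{k}=\sum_{j}a_{j}S_{j}^{k}$, with $a=(a_{j})$ distributed by the unitary-invariant measure $\mu_{k}$ on the unit sphere of $\mathbb{C}^{d_{k}}$, $d_{k}=\dim H^{0}(Y;F\otimes L^{k})$, and fixing a unit frame of $F\otimes L^{k}$ at each point, I obtain the pointwise decomposition
\[
\ln|s_{k}(y)|_{h^{F\otimes L^{k}}}=\tfrac12\ln\Pi_{k}(y,y)+\ln\big|\langle a,e_{k}(y)\rangle\big|,
\]
where $e_{k}(y)\in\mathbb{C}^{d_{k}}$ is the normalized evaluation vector, well defined for $k\gg0$ since $\Pi_{k}(y,y)>0$ by \prettyref{lem: uniform estimate bergman kernel}.

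The first summand is handled deterministically: \prettyref{lem: uniform estimate bergman kernel} gives $c_{1}k^{2/r}\leq\Pi_{k}(y,y)\leq c_{2}k$ uniformly in $y$, hence $|\ln\Pi_{k}(y,y)|=O(\ln k)$ uniformly and $\frac{1}{2k}\int_{Y}\ln\Pi_{k}(y,y)\,\frac{i}{\pi}\partial\bar{\partial}\phi=O(k^{-1}\ln k)\to0$. For the second summand set $Y_{k}(\phi)\coloneqq\int_{Y}\ln|\langle a,e_{k}(y)\rangle|\,\frac{i}{\pi}\partial\bar{\partial}\phi$. By unitary invariance, $|\langle a,e\rangle|^{2}$ is $\mathrm{Beta}(1,d_{k}-1)$-distributed for every unit vector $e$, so $\mathbb{E}\big[\ln|\langle a,e_{k}(y)\rangle|\big]=-\tfrac12H_{d_{k}-1}$ is independent of $y$; since $\int_{Y}\frac{i}{\pi}\partial\bar{\partial}\phi=0$ by Stokes, $\mathbb{E}[Y_{k}(\phi)]=0$. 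For the variance I would use only the crude estimate $\mathbb{E}\big[(\ln|\langle a,e_{k}(y)\rangle|)^{2}\big]=O((\ln d_{k})^{2})=O((\ln k)^{2})$, again $y$-independent; Cauchy--Schwarz applied to the covariance $\mathbb{E}[\ln|\langle a,e_{k}(y)\rangle|\,\ln|\langle a,e_{k}(y')\rangle|]$ together with Fubini then give $\mathrm{Var}[Y_{k}(\phi)]=\mathbb{E}[Y_{k}(\phi)^{2}]=O((\ln k)^{2})$ -- no off-diagonal Bergman estimate is needed. Hence $\mathbb{E}\big[(\tfrac1kY_{k}(\phi))^{2}\big]=O(k^{-2}(\ln k)^{2})$ is summable in $k$, so by Chebyshev's inequality and the Borel--Cantelli lemma $\tfrac1kY_{k}(\phi)\to0$ for $\mu$-a.e.\ $s$.

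Combining the two summands yields, for each fixed $\phi$, $\big(\tfrac1kZ_{s_{k}},\phi\big)\to\big(\tfrac{i}{2\pi}R^{L},\phi\big)$ $\mu$-a.s. To pass to weak convergence of currents I would fix a countable set $\{\phi_{m}\}$ dense in $C^{0}(Y)$; on the $\mu$-full-measure event on which the above holds for all $\phi_{m}$ simultaneously, a standard approximation argument using that $\tfrac1kZ_{s_{k}}$ is a positive $(1,1)$-current of uniformly bounded mass $\tfrac1k\deg(F\otimes L^{k})=\deg L+O(k^{-1})$ (cf.\ \prettyref{eq: hol. Euler characteristic}) and that $\tfrac{i}{2\pi}R^{L}$ has mass $\deg L$ extends the convergence to every $\phi\in C^{0}(Y)$, which is the assertion $\tfrac1kZ_{s_{k}}\rightharpoonup\tfrac{i}{2\pi}R^{L}$. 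The only place where the semi-positive case genuinely differs from the classical one -- and the sole point where the new Bergman input enters -- is the uniform two-sided control of $\Pi_{k}(y,y)$ across the degeneration locus supplied by \prettyref{lem: uniform estimate bergman kernel}; once that is in hand the remainder is the standard probabilistic argument, so I do not anticipate a serious obstacle.
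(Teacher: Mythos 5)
Your proof is correct and, while in the general Shiffman--Zelditch spirit, takes a genuinely different route from the paper's. The paper routes through Theorem \prettyref{thm:Tian semi-positive}: using $\mathbb{E}[Z_{s_{k}}]=\Phi_{k}^{*}\omega_{FS}$ it writes $\langle\frac{1}{k}Z_{s_{k}}-\frac{i}{2\pi}R^{L},\varphi\rangle=Y^{\varphi}(s_{k})+O(k^{-1/3}\|\varphi\|_{C^{0}})$ with $Y^{\varphi}(s_{k})=\langle\frac{1}{k}Z_{s_{k}}-\frac{1}{k}\Phi_{k}^{*}\omega_{FS},\varphi\rangle$, then bounds $\mathbb{E}[|Y^{\varphi}(s_{k})|^{2}]=O(k^{-2})$ by the Ma--Marinescu variance computation. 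This passage through the Kodaira map implicitly consumes Lemma \prettyref{lem: uniform estimate on derivative Bergman} (the derivative estimates feeding into \prettyref{thm:Tian semi-positive}). You bypass the Kodaira map and Tian's theorem entirely: after Poincar\'e--Lelong you split $\ln|s_{k}(y)|=\frac{1}{2}\ln\Pi_{k}(y,y)+\ln|\langle a,e_{k}(y)\rangle|$, control the deterministic term using only the zeroth-order two-sided bound of Lemma \prettyref{lem: uniform estimate bergman kernel} (giving $O(k^{-1}\ln k)$, and also ensuring $\Pi_{k}(y,y)>0$ so that $e_{k}(y)$ is defined), and handle the random term purely probabilistically via the exact $\mathrm{Beta}(1,d_{k}-1)$ law of $|\langle a,e\rangle|^{2}$: $y$-independent mean kills the expectation by Stokes, and a crude $y$-uniform second moment with Cauchy--Schwarz and Fubini gives $\mathrm{Var}[Y_{k}(\phi)]=O((\ln k)^{2})$ with no off-diagonal Bergman input whatsoever. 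This is slightly weaker than the paper's $O(k^{-2})$ variance but $O(k^{-2}(\ln k)^{2})$ is still summable, so Borel--Cantelli and the countable-dense-family plus uniform-mass argument close the proof. Net effect: your route is more elementary in that it needs strictly less of the uniform Bergman machinery (no derivative estimates, no Tian approximation), at the cost of a marginally weaker, but still summable, variance bound; both proofs agree that the only semi-positive-specific ingredient is uniform two-sided on-diagonal control of $\Pi_{k}$.
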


\begin{proof}
The proof follows \cite{Ma-Marinescu} Thm 5.3.3 with some modifications
which we point out below. With $\Phi_{k}$ denoting the Kodaira map
\prettyref{eq:Kodaira map}, we first have 
\begin{equation}
\mathbb{E}\left[Z_{s_{k}}\right]=\Phi_{k}^{*}\left(\omega_{FS}\right)\label{eq: prob. Poincare Lelong}
\end{equation}
as in \cite{Ma-Marinescu} Thm 5.3.1. For a given $\varphi\in\Omega_{0,0}\left(Y\right)$,
one has 
\[
\left\langle \frac{1}{k}Z_{s_{k}}-\frac{i}{2\pi}R^{L},\varphi\right\rangle =\left\langle \frac{1}{k}Z_{s_{k}}-\frac{1}{k}\Phi_{k}^{*}\left(\omega_{FS}\right),\varphi\right\rangle +O\left(k^{-1/3}\left\Vert \varphi\right\Vert _{C^{0}}\right)
\]
following \prettyref{eq: uniform conv. of FS} and it thus suffices
to show $Y^{\varphi}\left(s_{k}\right)\rightarrow0$, $\mu$-almost
surely with 
\begin{align*}
Y^{\varphi}\left(s_{k}\right) & \coloneqq\left\langle \frac{1}{k}Z_{s_{k}}-\frac{1}{k}\Phi_{k}^{*}\left(\omega_{FS}\right),\varphi\right\rangle 
\end{align*}
being the given random variable. But \prettyref{eq: prob. Poincare Lelong}
gives 
\begin{align*}
\mathbb{E}\left[\left|Y^{\varphi}\left(s_{k}\right)\right|^{2}\right] & =\frac{1}{k^{2}}\mathbb{E}\left[\left\langle Z_{s_{k}},\varphi\right\rangle ^{2}\right]-\frac{1}{k^{2}}\mathbb{E}\left[\left\langle \Phi_{k}^{*}\left(\omega_{FS}\right),\varphi\right\rangle ^{2}\right]\\
 & =O\left(k^{-2}\right)
\end{align*}
as in \cite{Ma-Marinescu} Thm 5.3.3. Thus $\int_{\Omega}d\mu\left[\sum_{k=1}^{\infty}\left|Y^{\varphi}\left(s_{k}\right)\right|^{2}\right]<\infty$
proving the theorem. 
\end{proof}
The above result may be alternatively obtained using $L^{2}$ estimates
for the $\bar{\partial}$-equation of a modified positive metric as
in \cite[S 4]{Dinh-Ma-Marinescu2016}.
\begin{example}
(Random polynomials) The last theorem has an interesting specialization
to random polynomials. To this end, let $Y=\mathbb{CP}^{1}=\mathbb{C}_{w}^{2}\setminus\left\{ 0\right\} /\mathbb{C}^{*}$
with homogeneous coordinates $\left[w_{0}:w_{1}\right]$. A semi-positive
curvature form for each even $r\geq2$, is given by 
\begin{equation}
\begin{split}\omega_{r}\coloneqq & \frac{i}{2\pi}\partial\bar{\partial}\ln\left(\left|w_{0}\right|^{r}+\left|w_{1}\right|^{r}\right)\\
= & \frac{i}{2\pi}\frac{r^{2}}{4}\frac{\left|w_{0}\right|^{r-2}\left|w_{1}\right|^{r-2}}{\left(\left|w_{0}\right|^{r}+\left|w_{1}\right|^{r}\right)^{2}}dw_{0}\wedge dw_{1},
\end{split}
\label{eq:semi-positiveform}
\end{equation}
which has two vanishing points at the north/south poles of order $r-2$.
This is the curvature form on the hyperplane line bundle $L=\mathcal{O}\left(1\right)$
for the metric with potential $\varphi=\ln\left(\left|w_{0}\right|^{r}+\left|w_{1}\right|^{r}\right)$.
An orthogonal basis for $H^{0}\left(X,L^{k}\right)$ is given by $s_{\alpha}\coloneqq z^{\alpha}$,
$0\leq\alpha\leq k$, in terms of the affine coordinate $z=w_{0}/w_{1}$
on the chart $\left\{ w_{1}\neq0\right\} $ and a $\mathbb{C}^{*}$
invariant trivialization of $L$. The normalization is now given by
\begin{align*}
\left\Vert s_{\alpha}\right\Vert ^{2} & =\frac{1}{2\pi}\frac{r^{2}}{4}\int_{\mathbb{C}}\frac{\left|z\right|^{2\alpha+r-2}}{\left(1+\left|z\right|^{r}\right)^{k+2}}\\
 & =\frac{1}{\frac{2}{r}\left(k+1\right)\begin{pmatrix}k\\
\frac{2}{r}\alpha
\end{pmatrix}}
\end{align*}
with the binomial coefficient $\begin{pmatrix}k\\
\frac{2}{r}\alpha
\end{pmatrix}=\frac{\Gamma\left(k+1\right)}{\Gamma\left(\frac{2}{r}\alpha+1\right)\Gamma\left(k-\frac{2}{r}\alpha\right)}$ given in terms of the Gamma function. We have now arrived at the
following. 
\end{example}

\begin{cor}
For each even $r\geq2$, let 
\[
p_{k}\left(z\right)=\sum_{\alpha=0}^{k}c_{\alpha}\sqrt{\begin{pmatrix}k\\
\frac{2}{r}\alpha
\end{pmatrix}}z^{\alpha}
\]
be a random polynomial of degree $k$ with the coefficients $c_{\alpha}$
being standard i.i.d.\ Gaussian variables. The distribution of its
roots converges in probability 
\[
\frac{1}{k}Z_{p_{k}}\rightharpoonup\frac{1}{2\pi}\frac{r^{2}}{4}\frac{\left|z\right|^{r-2}}{\left(1+\left|z\right|^{r}\right)^{2}}\,\cdot
\]
\end{cor}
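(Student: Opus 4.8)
The plan is to read the Corollary as the special case of \prettyref{thm: random section} attached to the data of the preceding Example, and to carry out only the bookkeeping that turns the Gaussian ensemble in the statement into the uniform sphere ensemble used in the theorem. Concretely, I would take $Y=\mathbb{CP}^1$, $F=\mathbb{C}$ with the trivial metric, and $(L,h^L)$ the line bundle whose metric has potential $\varphi=\ln(|w_0|^r+|w_1|^r)$; by \prettyref{eq:semi-positiveform} its curvature is the semi-positive form $\omega_r$, which vanishes to order exactly $r-2$ at the two poles and is positive elsewhere, so $(L,h^L)$ satisfies the hypotheses of \prettyref{thm: random section}. The limiting current in that theorem is then $\tfrac{i}{2\pi}R^L=\omega_r$, which in the affine chart $z=w_0/w_1$ is exactly the density $\tfrac1{2\pi}\tfrac{r^2}{4}\tfrac{|z|^{r-2}}{(1+|z|^r)^2}$ appearing in the Corollary.

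Next I would identify the two probabilistic models. In the $\mathbb{C}^*$-invariant trivialization over the affine chart the spaces $H^0(Y;F\otimes L^k)$ are spanned by the monomials $z^\alpha$, and these are pairwise orthogonal for the induced $L^2$ product because the weight $|z|^{2\alpha}(1+|z|^r)^{-k}$ together with the volume form is rotation invariant; their norms are the one-variable Beta integrals computed in the Example, giving $\|z^\alpha\|^{-2}$ equal, up to an $\alpha$-independent constant, to the binomial $\binom{k}{\frac2r\alpha}$. Hence $\{z^\alpha/\|z^\alpha\|\}$ is an orthonormal basis, and a section drawn from the standard complex Gaussian on $H^0(Y;F\otimes L^k)$ is, in distribution, a fixed ($k$-dependent but $\alpha$-independent) scalar multiple of the random polynomial $p_k$. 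Since the zero divisor $Z_{p_k}$ depends on the section only through its projective class, and since the componentwise normalization of an i.i.d.\ Gaussian vector is distributed according to the uniform measure $\mu_k$ on the unit sphere of $H^0(Y;F\otimes L^k)$, the law of the current $\tfrac1k Z_{p_k}$ coincides with the law of $\tfrac1k Z_{s_k}$ for $s_k$ sampled from $\mu_k$; the same identity holds for the product ensembles.

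With this matching in hand the Corollary is immediate: \prettyref{thm: random section} gives that for $\mu$-almost every sequence $\tfrac1k Z_{s_k}\rightharpoonup\tfrac{i}{2\pi}R^L=\omega_r$, hence $\tfrac1k Z_{p_k}\rightharpoonup\omega_r$ almost surely and, in particular, in probability. One could alternatively argue convergence in probability directly for each fixed test function from Chebyshev's inequality together with the variance bound $\mathbb{E}|Y^\varphi(s_k)|^2=O(k^{-2})$ established inside the proof of \prettyref{thm: random section}, which itself rests on \prettyref{lem: uniform estimate bergman kernel} and the Fubini--Study convergence of \prettyref{thm:Tian semi-positive}. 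There is no analytic obstacle remaining at this stage; the only genuinely computational point is the Beta-function evaluation of $\|z^\alpha\|$ and the verification that the prefactor relating it to $\binom{k}{\frac2r\alpha}^{-1/2}$ is independent of $\alpha$, which is already recorded in the Example.
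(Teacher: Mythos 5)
Your proposal is correct and is essentially the argument the paper intends: the Corollary is the specialization of \prettyref{thm: random section} to the data $(Y,L,h^L,F)=(\mathbb{CP}^1,\mathcal{O}(1),e^{-\varphi},\mathbb{C})$ with $\varphi=\ln(|w_0|^r+|w_1|^r)$, combined with the norm computation of the Example showing $\{z^\alpha/\|z^\alpha\|\}$ is an $L^2$-orthonormal basis of $H^0(Y;L^k)$ with $\|z^\alpha\|^{-2}$ equal to an $\alpha$-independent constant times $\binom{k}{\frac{2}{r}\alpha}$, and the standard observation that normalizing an i.i.d.\ Gaussian vector yields the uniform measure on the sphere while leaving the zero divisor unchanged. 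The paper leaves this bookkeeping implicit after the Example; you have supplied it correctly, including the passage from $\mu$-almost-sure convergence to convergence in probability.
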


The above theorem interpolates between the case of $SU\left(2\right)$/elliptic
polynomials ($r=2$) \cite{Bogomolny-Bohigas-Lebouef96} and the case
of Kac polynomials ($r=\infty$) \cite{Hammersley56,Kac43,Shepp-Vanderbei95}.
For recent results on the distribution of zeroes of more general classes
of random polynomials we refer to \cite{Bayraktar-Coman-Herrmann-Marinescu2018,Bloom-Levenberg2015,Kabluchko-Zaporozhets2014}.

\section{\label{sec:Holomorphic-torsion} Holomorphic torsion}

In this section we give an asymptotic result for the holomorphic torsion
of the semi-positive line bundle $L$ generalizing that of \cite{Bismut-Vasserot}
(see also \cite[S 5.5]{Ma-Marinescu}). First recall that the holomorphic
torsion of $L$ is defined in terms of the zeta function 
\begin{equation}
\zeta_{k}\left(s\right)\coloneqq\frac{1}{\Gamma\left(s\right)}\int_{0}^{\infty}dt\,t^{s-1}\textrm{tr}\left[e^{-t\Box_{k}^{1}}\right],\quad\textrm{Re}\left(s\right)>1.\label{eq:zeta fn.}
\end{equation}
The above converges absolutely and defines a holomorphic function
of $s\in\mathbb{C}$ in this region. It possesses a meromorphic extension
to $\mathbb{C}$ with no pole at zero and the holomorphic torsion
is defined to be $\mathcal{T}_{k}\coloneqq\exp\left\{ -\frac{1}{2}\zeta'_{k}\left(0\right)\right\} $.

Next, with $\tau^{L}$, $\omega\left(R^{L}\right)$ as in \prettyref{eq:formulas Clifford-1}
and $t>0$, set 
\begin{equation}
R_{t}\left(y\right)\coloneqq\begin{cases}
\frac{1}{2\pi}\tau^{L}\left(1-e^{-t\tau^{L}}\right)^{-1}e^{-t\omega\left(R^{L}\right)}; & \tau^{L}\left(y\right)>0\\
\frac{1}{2\pi}\frac{1}{t}; & \tau^{L}\left(y\right)=0.
\end{cases}\label{eq:heat coefficients Rt}
\end{equation}
Note that the above defines a smooth endomorphism $R_{t}\left(y\right)\in C^{\infty}\left(Y;\textrm{End}\left(\Lambda^{0,*}\right)\right)$.
Further, let $A_{j}\in C^{\infty}\left(Y;\textrm{End}\left(\Lambda^{0,*}\right)\right)$
be such that 
\begin{equation}
\rho_{t}^{N}\coloneqq R_{t}\left(y\right)-\sum_{j=-1}^{N}A_{j}\left(y\right)t^{j}=O\left(t^{N+1}\right).\label{eq:small time trace exp. D0}
\end{equation}
We now prove the following uniform small time asymptotic expansion
for the heat kernel. 
\begin{prop}
There exist $A_{k,j}\in C^{\infty}\left(Y;\textrm{End}\left(\Lambda^{0,*}\right)\right)$,
$j=-1,0,1,\ldots$, satisfying $A_{k,j}-A_{j}=O\left(k^{-1}\right)$,
such that for each $t>0$

\begin{equation}
\left|k^{-1}e^{-\frac{t}{2k}D_{k}^{2}}\left(y,y\right)-\sum_{j=-1}^{N}A_{k,j}\left(y\right)t^{j}-\rho_{t}^{N}\right|=O\left(t^{N+1}k^{-1}\right)\label{eq:small time exp. heat}
\end{equation}
uniformly in $y\in Y$, $k\in\mathbb{N}$. 
\end{prop}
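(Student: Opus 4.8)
The plan is to run the localization-and-rescaling scheme of the proof of \prettyref{thm:Bergman kernel expansion}, the rescaling now adapted to the heat equation run for time $t/k$. Writing $e^{-\frac{t}{2k}D_{k}^{2}}=e^{-\frac{t}{k}\Box_{k}}$ and $s=t/(2k)$: since by \prettyref{eq:Lichnerowicz for.} the zeroth order term of $D_{k}^{2}$ is $O(k)$ and $s=t/(2k)$ absorbs it into a bounded factor, one has a uniform-in-$k$ parabolic off-diagonal bound $\bigl|e^{-sD_{k}^{2}}(y,y')\bigr|\leq C(t)\,\frac{k}{t}\,e^{-ck\,d(y,y')^{2}/t}$ with $C(t)$ bounded for $t\leq T$, and since $u\mapsto\frac{k}{u}e^{-ck\varrho^{2}/u}$ is increasing on $(0,T]$ once $k$ is large, the part $d(y,y')\geq\varrho$ contributes $O(k^{-\infty})$ uniformly for $t\in(0,T]$. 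Combined with finite propagation speed for $\cos(uD_{k})$ and $D_{k}=\tilde{D}_{k}$ on $B_{\varrho}(y)$, this gives, exactly as in \prettyref{eq:Bergman localization},
\[
e^{-\frac{t}{2k}D_{k}^{2}}(y,y)=e^{-\frac{t}{2k}\tilde{D}_{k}^{2}}(0,0)+O(k^{-\infty}),
\]
uniformly in $y\in Y$ and $t\in(0,T]$, reducing matters to the model operator $\tilde{D}_{k}$ of \prettyref{eq: local Dirac}.

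Next, rather than the dilation $\delta_{k^{-1/r_{y}}}$ used for the Bergman kernel, I would rescale by the heat-natural dilation $\delta_{k^{-1/2}}$ and set $\mathcal{L}_{k}:=k^{-1}(\delta_{k^{-1/2}})_{*}\tilde{\Box}_{k}$, $\tilde{\Box}_{k}=\frac{1}{2}\tilde{D}_{k}^{2}$, so that $k^{-1}e^{-\frac{t}{2k}\tilde{D}_{k}^{2}}(0,0)=e^{-t\mathcal{L}_{k}}(0,0)$. A Taylor expansion of \prettyref{eq:model Laplace} exactly as in \prettyref{eq: Taylor expansion Dirac} gives $\mathcal{L}_{k}=\mathcal{L}_{0}+\sum_{j\geq1}k^{-j/2}\mathcal{L}_{j}$ with each $\mathcal{L}_{j}$ a uniformly (in $k$ and $y$) controlled second-order operator on $\mathbb{R}^{2}$ of polynomial coefficient growth, and leading term $\mathcal{L}_{0}=\frac{1}{2}\nabla_{0}^{*}\nabla_{0}+\bigl[\omega(R^{L}(y))-\frac{1}{2}\tau^{L}(y)\bigr]$, the model operator with the \emph{constant} curvature $R^{L}(y)$ — so only the zeroth jet of $R^{L}$ enters, in contrast with the Bergman kernel. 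Its on-diagonal heat kernel is exactly $R_{t}(y)$ of \prettyref{eq:heat coefficients Rt}: by the Mehler/Landau-level formula (cf.\ \prettyref{sec:Model-operators}), when $\tau^{L}(y)>0$ the spectrum of $\mathcal{L}_{0}$ on $\Lambda^{0,q}$ is $\tau^{L}(y)(\mathbb{N}_{0}+q)$ with each level of area-density $\frac{\tau^{L}(y)}{2\pi}$, while at the finitely many points of $Y\setminus Y_{2}$ one has $R^{L}(y)=0$, $\mathcal{L}_{0}=\frac{1}{2}\Delta$ and $e^{-t\mathcal{L}_{0}}(0,0)=\frac{1}{2\pi t}=R_{t}(y)$, the two regimes matching continuously.

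For the expansion I would use that $e^{-t\mathcal{L}_{k}}(0,0)$ is the on-diagonal heat kernel of the elliptic operator $\mathcal{L}_{k}\geq0$ with uniformly bounded geometry, hence carries a genuine small-$t$ Seeley--DeWitt expansion $e^{-t\mathcal{L}_{k}}(0,0)=\sum_{j=-1}^{N}a_{j}(\mathcal{L}_{k};0)\,t^{j}+O(t^{N+1})$ whose coefficients and remainder are universal and depend smoothly on the (finitely many) $\epsilon$-dependent jets of the symbol of $\mathcal{L}_{k}$ at $0$, $\epsilon:=k^{-1/2}$. The parity argument of the proof of \prettyref{thm:Bergman kernel expansion} — conjugation by $\tilde{x}\mapsto-\tilde{x}$ sends $\mathcal{L}_{\epsilon}$ to $\mathcal{L}_{-\epsilon}$, so $e^{-t\mathcal{L}_{\epsilon}}(0,0)$ is even in $\epsilon$ — rules out half-integer powers of $k^{-1}$, whence $a_{j}(\mathcal{L}_{k};0)=A_{j}(y)+O(k^{-1})$ and the remainder equals $\rho_{t}^{N}+O(t^{N+1}k^{-1})$ with $A_{j}$, $\rho_{t}^{N}$ as in \prettyref{eq:small time trace exp. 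D0} (this uses that $R_{t}(y)=e^{-t\mathcal{L}_{0}}(0,0)$, so the $A_{j}$ really are the Seeley--DeWitt coefficients of $\mathcal{L}_{0}$). Setting $A_{k,j}(y):=a_{j}(\mathcal{L}_{k};0)$ and folding in the $O(k^{-\infty})$ localization error yields \prettyref{eq:small time exp. heat} with $A_{k,j}-A_{j}=O(k^{-1})$; smoothness of $A_{k,j}$ in $y$ is inherited from the validity of all expansions in $C^{l}$. Equivalently one may first run the Duhamel/resolvent expansion of \cite{Dai-Liu-Ma2006-Bergman-kernel,Ma-Marinescu} in powers of $\epsilon$ and then extract the small-$t$ asymptotics of each coefficient.

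The main obstacle is the uniformity in $y\in Y$. For the Bergman kernel the analogous uniform bounds (Lemmas \prettyref{lem: uniform estimate bergman kernel} and \prettyref{lem: uniform estimate on derivative Bergman}) were delicate precisely because the $\delta_{k^{-1/r_{y}}}$-rescaling and the size of its error are governed by the first non-vanishing jet of $R^{L}$, which degenerates near $Y\setminus Y_{2}$. Here the uniform dilation $\delta_{k^{-1/2}}$ removes this difficulty at the source, since the reference operator $\mathcal{L}_{0}(y)$ depends only on the value $R^{L}(y)$, which varies smoothly over $Y$ and vanishes continuously at the bad points. What has to be checked is that the perturbation $\mathcal{L}_{k}-\mathcal{L}_{0}(y)$, all Duhamel remainders, and the Seeley--DeWitt coefficients are controlled purely by the $C^{l}$-norms of $R^{L},R^{F},\kappa$ and the injectivity radius — in particular that near a vanishing point of order $r_{y}-2$ the curvature part of $\mathcal{L}_{k}-\mathcal{L}_{0}(y)$ is $O\bigl(k^{-(r_{y}-2)/2}\bigr)=O(k^{-1})$ — so that no $y$-dependence of the constants creeps in; this is what replaces the compactness-covering arguments of \prettyref{subsec:Uniform-estimates-on}.
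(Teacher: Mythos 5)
Your proposal follows the same route as the paper: localize and replace $D_{k}$ by $\tilde{D}_{k}$ with an $O(k^{-\infty})$ error, rescale by the heat-natural dilation $\delta_{k^{-1/2}}$ so that the leading model is the constant-curvature operator $\boxdot_{0}$ with $e^{-t\boxdot_{0}}(0,0)=R_{t}(y)$, and then invoke the standard elliptic short-time (Seeley--DeWitt/Greiner) expansion with coefficients depending smoothly on $k^{-1/2}$. The only cosmetic difference is that you derive the absence of half-integer powers via an explicit parity argument, whereas the paper absorbs it by writing $(\delta_{k^{-1/2}})_{*}\tilde{D}_{k}^{2}=k(\boxdot_{0}+k^{-1}E_{1})$ directly with $E_{1}$ uniformly $C^{\infty}$ bounded; both are equivalent ways of packaging the same Duhamel remainder.
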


\begin{proof}
We again work in the geodesic coordinates and local orthonormal frames
centered at $y\in Y$ introduced in Section \prettyref{subsec:Bergman-kernel exp.}.
With $\tilde{D}_{k}$ as in \prettyref{eq: local Dirac}, similar
localization estimates as in Section \prettyref{subsec:Bergman-kernel exp.}
(cf.\ also Lemma 1.6.5 in \cite{Ma-Marinescu}) give 
\[
e^{-\frac{t}{k}D_{k}^{2}}\left(y,y\right)-e^{-\frac{t}{k}\tilde{D}_{k}^{2}}\left(0,0\right)=O\left(e^{-\frac{\varepsilon^{2}}{32t}k}\right)
\]
uniformly in $t>0$, $y\in Y$ and $k$. It then suffices to consider
the small time asymptotics of $e^{-\frac{t}{k}\tilde{D}_{k}^{2}}\left(0,0\right)$.
We again introduce the rescaling $\delta_{k^{-1/2}}y=k^{-1/2}y$,
under which
\[
\left(\delta_{k^{-1/2}}\right)_{*}\tilde{D}_{k}^{2}=k\Big(\underbrace{\boxdot_{0}+k^{-1}E_{1}}_{\eqqcolon\boxdot}\Big),
\]
with $\boxdot_{0}=bb^{\dagger}+\tau_{y}\bar{w}i_{\bar{w}}$; $b\coloneqq-2\partial_{z}+\frac{1}{2}\tau_{y}\bar{z}$
and where $E_{0}=a_{pq}\left(y;k\right)\partial_{y_{p}}\partial_{y_{q}}+b_{p}\left(y;k\right)\partial_{y_{p}}+c\left(y;k\right)$
is a second order operator whose coefficient functions are uniformly
(in $k$) $C^{\infty}$ bounded (cf.\ \cite{Ma-Marinescu} Thms.
4.1.7 and 4.1.25). Again $e^{-\frac{t}{k}\tilde{D}_{k}^{2}}\left(0,0\right)=ke^{-t\boxdot}\left(0,0\right)$
and following a standard small time heat kernel expansion of an elliptic
operator \cite{Greiner71,Seeley67} one has 
\begin{equation}
e^{-t\boxdot}\left(y_{1},y_{2}\right)=\frac{1}{4\pi t}e^{-\frac{1}{4t}d^{2}\left(y_{1},y_{2}\right)}\left[\sum_{j=-1}^{N}A_{k,j}t^{j}\right]+\rho_{k,t}^{N}\label{eq:small time exp k dep.}
\end{equation}
with $d\left(y_{1},y_{2}\right)$ denoting the distance function for
the metric $\tilde{g}^{TY}$ on $\mathbb{R}^{2}$. Moreover 
\begin{equation}
A_{k,j}\left(y_{1},y_{2}\right)=A_{j}\left(y_{1},y_{2}\right)+O\left(k^{-1}\right)\label{eq:asymptotic heat coeff}
\end{equation}
where $A_{j}$ denotes an analogous term in the small time expansion
of $e^{-t\boxdot_{0}}$ satisfying $A_{j}\left(0,0\right)=A_{j}$\prettyref{eq:small time trace exp. D0}
(\cite[(1.6.68)]{Ma-Marinescu}). Finally, the remainders in \prettyref{eq:small time trace exp. D0},
\prettyref{eq:small time exp k dep.} being given by 
\[
\rho_{t}^{N}=-\int_{0}^{t}ds\,e^{-\left(t-s\right)\boxdot_{0}}s^{N}\left(\boxdot_{0}A_{N}\right),\quad\rho_{k,t}^{N}=-\int_{0}^{t}ds\,e^{-\left(t-s\right)\boxdot}s^{N}\left(\boxdot A_{k,N}\right),
\]
the proposition now follows from \prettyref{eq:asymptotic heat coeff}
along with $e^{-t\boxdot}=e^{-t\boxdot_{0}}+O\left(k^{-1}\right)$
uniformly in $t>0$. 
\end{proof}
We now prove the the asymptotic result for holomorphic torsion. Below
we denote by $x\ln x$ the continuous extension of this function from
$\mathbb{R}_{>0}$ to $\mathbb{R}_{\geq0}$ (i.e. taking the value
zero at the origin). 
\begin{thm}
The holomorphic torsion satisfies the asymptotics 
\[
\ln\mathcal{T}_{k}\coloneqq-\frac{1}{2}\zeta'_{k}\left(0\right)=-k\ln k\left[\frac{\tau^{L}}{8\pi}\right]-k\left[\frac{\tau^{L}}{8\pi}\ln\left(\frac{\tau^{L}}{2\pi}\right)\right]+o\left(k\right)
\]
as $k\rightarrow\infty$. 
\end{thm}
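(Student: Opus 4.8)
The plan is to adapt the method of Bismut--Vasserot \cite{Bismut-Vasserot}, feeding in the uniform small-time heat kernel expansion just established in place of its positive-case analogue. First I would rescale the heat variable $t=u/k$ in \prettyref{eq:zeta fn.}, which yields $\zeta_k(s)=k^{-s}\widetilde\zeta_k(s)$ with
\[
\widetilde\zeta_k(s):=\frac1{\Gamma(s)}\int_0^\infty u^{s-1}\,\mathrm{Tr}\!\left[e^{-\frac uk\Box_k^1}\right]du .
\]
Since $H^1(Y;F\otimes L^k)=0$ and $\Box_k^1$ has the spectral gap of \prettyref{cor: spectral gap Dirac}, the heat trace decays exponentially in $u$ and $\widetilde\zeta_k$ is holomorphic at $s=0$, so
\[
\ln\mathcal T_k=-\tfrac12\zeta'_k(0)=\tfrac12(\ln k)\,\widetilde\zeta_k(0)-\tfrac12\widetilde\zeta'_k(0) .
\]
It therefore suffices to establish
\[
\widetilde\zeta_k(0)=-k\left[\tfrac{\tau^L}{4\pi}\right]+O(1),\qquad
\widetilde\zeta'_k(0)=k\left[\tfrac{\tau^L}{4\pi}\ln\tfrac{\tau^L}{2\pi}\right]+o(k),
\]
where $[\,\cdot\,]=\int_Y(\cdot)\,dV_Y$ for the Riemannian volume $dV_Y$; substituting these into the formula for $\ln\mathcal T_k$ and noting $O(\ln k)=o(k)$ then gives the theorem.

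For the first identity I would use that, by standard zeta-function theory and the vanishing of the kernel, $\widetilde\zeta_k(0)$ equals the coefficient of $u^0$ in the small-$u$ expansion of $\mathrm{Tr}[e^{-u\Box_k^1/k}]$; by the preceding proposition this coefficient is $k\int_Y A_{k,0}(y)|_{\Lambda^{0,1}}\,dV_Y(y)$, and since $A_{k,0}=A_0+O(k^{-1})$ uniformly with $A_0|_{\Lambda^{0,1}}=-\tfrac{\tau^L}{4\pi}$ --- the $u^0$ Taylor coefficient of $R_u|_{\Lambda^{0,1}}=\tfrac{\tau^L}{2\pi(e^{u\tau^L}-1)}$ read off from \prettyref{eq:heat coefficients Rt}, \prettyref{eq:small time trace exp. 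D0} --- this yields $\widetilde\zeta_k(0)=-k[\tfrac{\tau^L}{4\pi}]+O(1)$. For the second identity I would introduce the limiting profile: the same expansion (equivalently $e^{-u\boxdot}=e^{-u\boxdot_0}+O(k^{-1})$ uniformly in $u>0$, as in the proof of that proposition) gives $k^{-1}\mathrm{Tr}[e^{-u\Box_k^1/k}]=G(u)+O(k^{-1})$ uniformly in $u>0$, where
\[
G(u):=\int_Y R_u(y)\big|_{\Lambda^{0,1}}\,dV_Y(y)=\int_Y\frac{\tau^L(y)}{2\pi\big(e^{u\tau^L(y)}-1\big)}\,dV_Y(y).
\]
Fubini on a strip $1<\mathrm{Re}\,s<1+\delta$ where the double integral converges, together with $\int_0^\infty v^{s-1}(e^v-1)^{-1}dv=\Gamma(s)\zeta_{\mathrm R}(s)$, then identifies
\[
\widetilde\zeta_\infty(s):=\frac1{\Gamma(s)}\int_0^\infty u^{s-1}G(u)\,du=\frac{\zeta_{\mathrm R}(s)}{2\pi}\int_Y(\tau^L)^{1-s}\,dV_Y,
\]
which by analytic continuation is regular at $s=0$: the integral converges near $s=0$ because $\tau^L$ vanishes to finite order, so $\{\tau^L=0\}$ is a null set and $\tau^L\ln\tau^L$ is bounded. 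Using $\zeta_{\mathrm R}(0)=-\tfrac12$ and $\zeta'_{\mathrm R}(0)=-\tfrac12\ln(2\pi)$, differentiating gives $\widetilde\zeta_\infty(0)=-[\tfrac{\tau^L}{4\pi}]$ (matching the first identity) and $\widetilde\zeta'_\infty(0)=[\tfrac{\tau^L}{4\pi}\ln\tfrac{\tau^L}{2\pi}]$.

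It then remains to transfer this, i.e. to show $\widetilde\zeta'_k(0)=k\,\widetilde\zeta'_\infty(0)+o(k)$, which amounts to $k^{-1}\widetilde\zeta_k(s)-\widetilde\zeta_\infty(s)$ and its $s$-derivative tending to $0$ at $s=0$. Writing the Mellin integral of $k^{-1}\mathrm{Tr}[e^{-u\Box_k^1/k}]-G(u)$ as $\int_0^1+\int_1^\infty$: the part over $(0,1]$ is $O(k^{-1})$, since the expansion's error terms $\sum_j(A_{k,j}-A_j)u^j+O(u^{N+1}k^{-1})$ have Mellin transform $O(k^{-1})$ near $s=0$. The part over $[1,\infty)$ is the main obstacle: the pointwise bound $O(k^{-1})$ is not integrable against $u^{-1}du$ at infinity, so one must split it further at $u=k^{\theta}$ for a fixed $\theta\in(1-\tfrac2r,\,1)$ --- over $1\le u\le k^{\theta}$ the uniform $O(k^{-1})$ estimate contributes $O(k^{-1}\ln k)=o(1)$, while over $u\ge k^{\theta}$ one combines the exponential bound $k^{-1}\mathrm{Tr}[e^{-u\Box_k^1/k}]\le C e^{-c\,u\,k^{2/r-1}}$ coming from the spectral gap \prettyref{cor: spectral gap Dirac} with the polynomial decay $G(u)=O(u^{-1-\delta})$ as $u\to\infty$, the latter because finite vanishing order of $R^L$ forces the sublevel set $\{\tau^L\le\epsilon\}$ to have volume $O(\epsilon^{\delta})$ for some $\delta>0$; both tails are then $o(1)$. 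This gives $\widetilde\zeta'_k(0)=k\,\widetilde\zeta'_\infty(0)+o(k)$, and combining with $\widetilde\zeta_k(0)=-k[\tfrac{\tau^L}{4\pi}]+O(1)$ produces $\ln\mathcal T_k=-k\ln k[\tfrac{\tau^L}{8\pi}]-k[\tfrac{\tau^L}{8\pi}\ln\tfrac{\tau^L}{2\pi}]+o(k)$. The genuinely delicate step is precisely this $[1,\infty)$ estimate --- matching the error of the $k$-uniform heat expansion against the slow polynomial decay of $G$ at infinity --- for which the $k^{2/r}$ spectral gap is essential; everything else is the Mellin-transform bookkeeping above and the explicit computation of $R_t$ already recorded in \prettyref{eq:heat coefficients Rt}.
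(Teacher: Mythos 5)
Your proposal follows the same overall route as the paper's: rescale the zeta function, exploit the uniform small-time heat-kernel expansion of the preceding proposition on the near-zero part of the Mellin integral, and use the $k^{2/r}$ spectral gap of Corollary~\ref{cor: spectral gap Dirac} to kill the far tail. Two points where your implementation differs and in my view improves on the write-up in the text:
\begin{itemize}
\item The paper computes the limit of $\tilde\zeta_k'(0)$ by dominated convergence onto a pointwise quantity $\alpha(y)$ and then simply asserts $\alpha(y)=\tfrac{\tau^L}{4\pi}\ln\tfrac{\tau^L}{2\pi}$. Your Mellin identity $\tilde\zeta_\infty(s)=\tfrac{\zeta_{\mathrm R}(s)}{2\pi}\int_Y(\tau^L)^{1-s}\,dV_Y$ makes that computation explicit, and the values $\zeta_{\mathrm R}(0)=-\tfrac12$, $\zeta_{\mathrm R}'(0)=-\tfrac12\ln 2\pi$ deliver both leading coefficients in one stroke. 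The observation that $\int_Y(\tau^L)^{1-s}$ is holomorphic at $s=0$ because $\tau^L(\ln\tau^L)^n$ is bounded is exactly the right way to see regularity there.
\item For the large-$u$ error the paper takes a single cut at $T=k^{1-2/r}$ and works with the $N=0$ remainder $\rho_t^0$ out to $T$, with error $O(T/k)=O(k^{-2/r})$; you instead split at $1$ and $k^\theta$, $\theta\in(1-2/r,1)$, using the uniform operator statement $e^{-u\boxdot}=e^{-u\boxdot_0}+O(k^{-1})$ on $[1,k^\theta]$ and then the two separate tails. Both work, driven by the same spectral gap. One small caution in your version: on $[1,k^\theta]$ you invoke the proposition's error $O(u^{N+1}k^{-1})$, but that is only useful for $u$ bounded; for $u\geq1$ you should be appealing to the separate uniform-in-$u$ Duhamel bound asserted inside the proposition's proof, as you in fact note parenthetically. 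Also, you do not need a Lojasiewicz-type sublevel-volume estimate for $G$: the trivial bound $\tau^L/(e^{u\tau^L}-1)\leq u^{-1}$ already gives $G(u)=O(u^{-1})$, which is enough for $\int_{k^\theta}^\infty u^{-1}G(u)\,du\to0$.
\end{itemize}
With those caveats the argument is sound and matches the paper's conclusion.
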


\begin{proof}
First define the rescaled zeta function $\tilde{\zeta}_{k}\left(s\right)\coloneqq\frac{k^{-1}}{\Gamma\left(s\right)}\int_{0}^{\infty}dt\,t^{s-1}\textrm{tr}\left[e^{-\frac{t}{k}\Box_{k}^{1}}\right]=k^{-1}k^{s}\zeta_{k}\left(s\right)$
satisfying 
\begin{equation}
\zeta_{k}'\left(0\right)=k\tilde{\zeta}_{k}'\left(0\right)-\left(k\ln k\right)\tilde{\zeta}_{k}\left(0\right).\label{eq: zeta vs resc. zeta}
\end{equation}
With $a_{k,j}\coloneqq\int_{Y}\textrm{tr}\left[A_{k,j}\right]dy$,
$j=-1,0,\ldots$, and the analytic continuation of the zeta function
being given in terms of the heat trace, one has 
\begin{align}
\tilde{\zeta}_{k}\left(0\right) & =a_{k,0}\rightarrow\int_{Y}dy\,\textrm{tr}\left[A_{0}\right],\label{eq: zeta conv.}\\
\tilde{\zeta}_{k}'\left(0\right) & =\underbrace{\int_{0}^{T}dt\,t^{-1}\left\{ k^{-1}\textrm{tr}\left[e^{-\frac{t}{k}\Box_{k}^{1}}\right]-a_{k,-1}t^{-1}-a_{k,0}\right\} }_{=\int_{0}^{T}dt\,t^{-1}\rho_{t}^{0}+O\left(\frac{T}{k}\right)}\nonumber \\
 & \quad+\int_{T}^{\infty}dt\,t^{-1}k^{-1}\textrm{tr}\left[e^{-\frac{t}{k}\Box_{k}^{1}}\right]\nonumber \\
 & \quad-a_{k,-1}T^{-1}+\Gamma'\left(1\right)a_{k,0}\label{eq: zeta in heat tr.}
\end{align}
following \prettyref{eq:small time exp. heat}.

Choosing $T=k^{1-2/r}$, gives 
\begin{align*}
t^{-1}k^{-1}\textrm{tr}\left[e^{-\frac{t}{k}\Box_{k}^{1}}\right] & \leq e^{-\frac{\left(t-1\right)}{k}\left[c_{1}k^{2/r}-c_{2}\right]}t^{-1}k^{-1}\textrm{tr}\left[e^{-\frac{1}{k}\Box_{k}^{1}}\right]\\
 & \leq Ct^{-1}k^{-1}e^{-\frac{\left(t-1\right)}{k}\left[c_{1}k^{2/r}-c_{2}\right]},\quad t\geq T,
\end{align*}
on account of \prettyref{eq:spectral gap Kodaira}, \prettyref{eq:small time exp. heat}.
The last expression having a uniformly in $k$ bounded integral on
$\left[T,\infty\right)$, dominated convergence gives 
\begin{align}
\tilde{\zeta}_{k}'\left(0\right)\longrightarrow & \int_{Y}dy\,\alpha\left(y\right),\quad\textrm{ where }\nonumber \\
\alpha\left(y\right)\coloneqq & \int_{0}^{T}dt\,t^{-1}\left\{ \textrm{tr}\left[R_{t}\left(y\right)\right]-\textrm{tr}\left[A_{-1}\right]t^{-1}-\textrm{tr}\left[A_{0}\right]\right\} \nonumber \\
 & +\int_{T}^{\infty}dt\,t^{-1}\textrm{tr}\left[R_{t}\left(y\right)\right]\nonumber \\
 & -\textrm{tr}\left[A_{-1}\right]t^{-1}+\Gamma'\left(1\right)\textrm{tr}\left[A_{0}\right].\label{eq: resc. zeta conv.}
\end{align}
Finally, using \prettyref{eq:heat coefficients Rt} one has 
\begin{align}
\textrm{tr}\left[A_{0}\right] & =-\frac{\tau^{L}}{4\pi}\nonumber \\
\alpha\left(y\right) & =\frac{\tau^{L}}{4\pi}\ln\left(\frac{\tau^{L}}{2\pi}\right)\label{eq: zeta limits}
\end{align}
with again the extension of the function $x\ln x$ to the origin being
given by continuity to be zero as before. The proposition now follows
from putting together \prettyref{eq: zeta vs resc. zeta}, \prettyref{eq: zeta conv.},
\prettyref{eq: zeta in heat tr.}, \prettyref{eq: resc. zeta conv.}
and \prettyref{eq: zeta limits}. 
\end{proof}

\appendix

\section{\label{sec:Model-operators}Model operators}

Here we define certain model Bochner/Kodaira Laplacians and Dirac
operators acting on a vector space $V$. First the Bochner Laplacian
is intrinsically associated to a triple $\left(V,g^{V},R^{V}\right)$
with metric $g^{V}$ and tensor $0\neq R^{V}\in S^{r-2}V^{*}\otimes\Lambda^{2}V^{*}$,
$r\geq2$. We say that tensor $R^{V}$ is nondegenerate if 
\begin{equation}
S^{r-s-2}V^{*}\otimes\Lambda^{2}V^{*}\ni i_{v}^{s}\left(R^{V}\right)=0,\,\forall s\leq r-2\,\implies T_{y}Y\ni v=0.\label{eq:non-degeneracy condition}
\end{equation}
Above $i^{s}$ denotes the $s$-fold contraction of the symmetric
part of $R^{V}$.

For $v_{1}\in V$, $v_{2}\in T_{v_{1}}V=V$, contraction of the antisymmetric
part (denoted by $\iota$) of $R^{V}$ gives $\iota_{v_{2}}R^{V}\in S^{r-2}V^{*}\otimes V^{*}$.
The contraction may then be evaluated $\left(\iota_{v_{2}}R^{V}\right)\left(v_{1}\right)$
at $v_{1}\in V$, i.e. viewed as a homogeneous degree $r-1$ polynomial
function on $V$. The tensor $R^{V}$ now determines a one form $a^{R^{V}}\in\Omega^{1}\left(V\right)$
via 
\begin{equation}
a_{v_{1}}^{R^{V}}\left(v_{2}\right)\coloneqq\int_{0}^{1}d\rho\left(\iota_{v_{2}}R^{V}\right)\left(\rho v_{1}\right)=\frac{1}{r}\left(\iota_{v_{2}}R^{V}\right)\left(v_{1}\right),\label{eq:connection from tensot}
\end{equation}
which we may view as a unitary connection $\nabla^{R^{V}}=d+ia^{R^{V}}$
on a trivial Hermitian vector bundle $E$ of arbitrary rank over $V$.
The curvature of this connection is clearly $R^{V}$ now viewed as
a homogeneous degree $r-2$ polynomial function on $V$ valued in
$\Lambda^{2}V^{*}$. This now gives the model Bochner Laplacian 
\begin{equation}
\Delta_{g^{V},R^{V}}\coloneqq\left(\nabla^{R^{V}}\right)^{*}\nabla^{R^{V}}:C^{\infty}\left(V;E\right)\rightarrow C^{\infty}\left(V;E\right).\label{eq:model Bochner Laplacian}
\end{equation}
An orthonormal basis $\left\{ e_{1},e_{2},\ldots,e_{n}\right\} $,
determines components $R_{pq,\alpha}\coloneqq R^{V}\left(e^{\odot\alpha};e_{p},e_{q}\right)\neq0$,
$\alpha\in\mathbb{N}_{0}^{n-1}$, $\left|\alpha\right|=r-2$, as well
as linear coordinates $\left(y_{1},\ldots,y_{n}\right)$ on $V$.
The connection form in these coordinates is given by $a_{p}^{R^{V}}=\frac{i}{r}y^{q}y^{\alpha}R_{pq,\alpha}.$
While the model Laplacian \prettyref{eq:model Bochner Laplacian}
is given 
\begin{equation}
\Delta_{g^{V},R^{V}}=-\sum_{q=1}^{n}\left(\partial_{y_{p}}+\frac{i}{r}y^{q}y^{\alpha}R_{pq,\alpha}\right)^{2}.\label{eq:model Bochner in coordinates}
\end{equation}
As in \prettyref{eq: Fourier decomposition Laplacian}, the above
may now be related to the (nilpotent) sR Laplacian on the the product
$S_{\theta}^{1}\times V$ given by 
\begin{equation}
\hat{\Delta}_{g^{V},R^{V}}\coloneqq-\sum_{q=1}^{n}\left(\partial_{y_{p}}+\frac{i}{r}y^{q}y^{\alpha}R_{pq,\alpha}\partial_{\theta}\right)^{2},\label{eq:nilpotent Laplacian}
\end{equation}
and corresponding to the sR structure $\left(S_{\theta}^{1}\times V,\ker\left(d\theta+a^{R^{V}}\right),\text{\ensuremath{\pi}}^{*}g^{V},\,d\theta\textrm{vol}g^{V}\right)$
where the sR metric corresponds to $g^{V}$ under the natural projection
$\pi:S_{\theta}^{1}\times V\rightarrow V$. Note that the above differs
from the usual nilpotent approximation of the sR Laplacian since it
acts on the product with $S^{1}$. As \prettyref{eq:Fourier mode heat kernel},
the heat kernels of \prettyref{eq:model Bochner Laplacian}, \prettyref{eq:nilpotent Laplacian}
are now related 
\begin{equation}
e^{-t\Delta_{g^{V},R^{V}}}\left(y,y'\right)=\int e^{-i\theta}e^{-t\hat{\Delta}_{g^{V},R^{V}}}\left(y,0;y',\theta\right)d\theta.\label{eq: model heat ker. reln.}
\end{equation}

Next, assume that the vector space $V$ of even dimension and additionally
equipped with an orthogonal endomorphism $J^{V}\in O\left(V\right)$;
$\left(J^{V}\right)^{2}=-1$. This gives rise to a (linear) integrable
almost complex structure on $V$ , a decomposition $V\otimes\mathbb{C}=V^{1,0}\oplus V^{0,1}$
into $\pm i$ eigenspaces of $J$ and a Clifford multiplication endomorphism
$c:V\rightarrow\textrm{End}\left(\Lambda^{*}V^{0,1}\right)$. We further
assume that $R^{V}$ is a $\left(1,1\right)$ form with respect to
$J$ (i.e. $S^{k}V^{*}\ni R^{V}\left(w_{1},w_{2}\right)=0$, $\forall w_{1},w_{2}\in V^{1,0}$).
The $\left(0,1\right)$ part of the connection form \prettyref{eq:connection from tensot}
then gives a holomorphic structure on the trivial Hermitian line bundle
$\mathbb{C}$ with holomorphic derivative $\bar{\partial}_{\mathbb{C}}=\bar{\partial}+\left(a^{V}\right)^{0,1}$.
One may now define the Kodaira Dirac and Laplace operators, intrinsically
associated to the tuple $\left(V,g^{V},R^{V},J^{V}\right)$, via 
\begin{align}
D_{g^{V},R^{V},J^{V}}\coloneqq & \sqrt{2}\left(\bar{\partial}_{\mathbb{C}}+\bar{\partial}_{\mathbb{C}}^{*}\right)\label{eq:model Kodaira Dirac}\\
\Box_{g^{V},R^{V},J^{V}}\coloneqq & \frac{1}{2}\left(D_{g^{V},R^{V},J^{V}}\right)^{2}\label{eq: model Kodaira Laplace}
\end{align}
acting on $C^{\infty}\left(V;\Lambda^{*}V^{0,1}\right)$. The above
\prettyref{eq:model Bochner Laplacian}, \prettyref{eq: model Kodaira Laplace}
are related by the Lichnerowicz formula 
\begin{equation}
\Box_{g^{V},R^{V},J^{V}}=\Delta_{g^{V},R^{V}}+c\left(R^{V}\right)\label{eq:model Lichnerowicz}
\end{equation}
where $c\left(R^{V}\right)=\sum_{p<q}R_{pq}^{i_{1}\ldots i_{r-2}}y_{i_{1}\ldots}y_{i_{r-2}}c\left(e_{p}\right)c\left(e_{q}\right)$.
We may choose complex orthonormal basis $\left\{ w_{j}\right\} _{j=1}^{m}$
of $V^{1,0}$ that diagonalizes the tensor $R^{V}$: $R^{V}\left(w_{i},\bar{w}_{j}\right)=\delta_{ij}R_{j\bar{j}}$;
$R_{i\bar{j}}\in S^{r-2}V^{*}$. This gives complex coordinates on
$V$ in which \prettyref{eq: model Kodaira Laplace} may be written
as 
\begin{align}
\Box_{g^{V},R^{V},J^{V}} & =\sum_{q=1}^{\textrm{dim}V/2}b_{j}b_{j}^{\dagger}+2\left(\partial_{z_{j}}a_{j}+\partial_{\bar{z}_{j}}\bar{a}_{j}\right)\bar{w}_{j}i_{\bar{w}_{j}}\label{eq:model Kodaira Lap.}\\
b_{j} & =-2\partial_{z_{j}}+\bar{a}_{j}\nonumber \\
b_{j}^{\dagger} & =2\partial_{\bar{z}_{j}}+a_{j}\nonumber \\
a_{j} & =\frac{1}{r}R_{j\bar{j}}z_{j}\nonumber 
\end{align}
with each $R_{j\bar{j}}\left(z\right)$, $1\leq j\leq\textrm{dim}V/2$,
being a real homogeneous function of order $r-2$.

Being symmetric with respect to the standard Euclidean density and
semi-bounded below, both $\Delta_{g^{V},R^{V}}$ and $\Box^{V}$ are
essentially self-adjoint on $L^{2}$. The domains of their unique
self-adjoint extensions are 
\[
\begin{split}\textrm{Dom}\left(\Delta_{g^{V},R^{V}}\right) & =\left\{ \psi\in L^{2}|\Delta_{g^{V},R^{V}}\psi\in L^{2}\right\} ,\\
\textrm{Dom}\left(\Box_{g^{V},R^{V},J^{V}}\right) & =\left\{ \psi\in L^{2}|\Box_{g^{V},R^{V},J^{V}}\psi\in L^{2}\right\} ,
\end{split}
\]
respectively. 
We shall need the following information regarding their spectrum. 
\begin{prop}
\label{prop:model spectra}For some $c>0$, one has $\textrm{Spec}\left(\Delta_{g^{V},R^{V}}\right)\subset\left[c,\infty\right)$.
For $R^{V}$ satisfying the non-degeneracy condition \prettyref{eq:non-degeneracy condition}
one has $\textrm{EssSpec}\left(\Delta_{g^{V},R^{V}}\right)=\emptyset$.
Finally, for $\textrm{dim}V=2$ with $R^{V}\left(w,\bar{w}\right)\geq0$,
$\forall w\in V^{1,0}$ semi-positive one has $\textrm{Spec}\left(\Box_{g^{V},R^{V},J^{V}}\right)\subset\left\{ 0\right\} \cup\left[c,\infty\right).$ 
\end{prop}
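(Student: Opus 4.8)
The plan is to establish the three assertions in the order (i) $\mathrm{Spec}(\Delta_{g^{V},R^{V}})\subset[c,\infty)$, (ii) $\mathrm{EssSpec}(\Delta_{g^{V},R^{V}})=\emptyset$, and (iii) $\mathrm{Spec}(\Box_{g^{V},R^{V},J^{V}})\subset\{0\}\cup[c,\infty)$, with (i) as the analytic core, (ii) a sharpening of the same mechanism, and (iii) a short consequence of (i) together with the Lichnerowicz formula \prettyref{eq:model Lichnerowicz}. The recurring structural point is that through \prettyref{eq:model Bochner in coordinates} the operator $\Delta_{g^{V},R^{V}}$ is a magnetic Laplacian whose field is a \emph{nonzero homogeneous polynomial} of degree $r-2$: its $(r-2)$-jet is a nonzero constant, so the field vanishes to order at most $r-2$ at every point of $V$, and for $r\geq3$ it grows at infinity.

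For the gap (i) I would argue as in Propositions \prettyref{prop: gen. spectral gap} and \prettyref{prop:Dk spectral estimate}, transplanted to the model. Realise $\Delta_{g^{V},R^{V}}$ as a Fourier mode of the nilpotent sub-Riemannian Laplacian $\hat\Delta_{g^{V},R^{V}}$ on $\mathbb{R}^{n}_{y}\times S^{1}_{\theta}$ of \prettyref{eq:nilpotent Laplacian}, in analogy with \prettyref{eq: Fourier decomposition Laplacian}; its horizontal distribution $\ker(d\theta+a^{R^{V}})$ is bracket generating exactly because $R^{V}$ vanishes to finite order everywhere (it is a nonzero polynomial), so the Rothschild--Stein estimate $\|u\|_{H^{1/r}}^{2}\leq C(\langle\hat\Delta_{g^{V},R^{V}}u,u\rangle+\|u\|^{2})$ holds locally with a uniform constant (the step is $\leq r$ throughout). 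A patching-and-rescaling argument using the anisotropic dilation $(y,\theta)\mapsto(\lambda y,\lambda^{r}\theta)$, which preserves the sub-Riemannian structure up to the conformal factor $\lambda^{2}$ on the metric while only strengthening the field towards infinity, promotes this to a global estimate on $\mathbb{R}^{n}\times S^{1}$. Feeding in the pullback of an $L^{2}$-eigensection $s$ of $\Delta_{g^{V},R^{V}}$ with eigenvalue $\lambda$ — on which the $\theta$-derivative acts by a unit imaginary scalar — gives $\|s\|^{2}\leq\|s\|_{H^{1/r}}^{2}\leq C(\lambda+1)\|s\|^{2}$, while the eigenvalue $0$ is excluded since a covariantly constant section of $(E,\nabla^{R^{V}})$ has constant, hence non-$L^{2}$, norm; together these force $\inf\mathrm{Spec}(\Delta_{g^{V},R^{V}})>0$. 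Equivalently one may invoke the Helffer--Mohamed lower bound $\langle\Delta_{g^{V},R^{V}}u,u\rangle\gtrsim\int_{V}\big(1+\sum_{s}|i^{s}R^{V}(y)|\big)^{2/r}|u|^{2}$, which is $\gtrsim\|u\|^{2}$ thanks to the nonvanishing top jet and which we shall want in this form anyway; for $r=2$ this is a Landau Hamiltonian and the gap is classical.

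For (ii) I would use Persson's characterisation $\inf\mathrm{EssSpec}(\Delta_{g^{V},R^{V}})=\lim_{R\to\infty}\inf\{\langle\Delta_{g^{V},R^{V}}u,u\rangle:u\in C_{c}^{\infty}(\{|y|>R\}),\ \|u\|=1\}$ and show the limit is $+\infty$. Rescaling $y=\rho z$ conjugates $\Delta_{g^{V},R^{V}}$ on a shell $\{|y|\sim\rho\}$ to $\rho^{-2}\Delta_{g^{V},\rho^{r}R^{V}}$ on $\{|z|\sim1\}$, a magnetic Laplacian whose field has size $\rho^{r}$; since the non-degeneracy condition \prettyref{eq:non-degeneracy condition} forbids $R^{V}$ from vanishing to the maximal order $r-2$ in any direction, the standard magnetic-bottle bounds give that the bottom of this operator on $\{|z|\sim1\}$ is $\gtrsim\rho^{\varepsilon r}$ for some $\varepsilon>0$, so the bottom of $\Delta_{g^{V},R^{V}}$ on $\{|y|>R\}$ tends to $+\infty$. (Equivalently, \prettyref{eq:non-degeneracy condition} forces the Helffer--Mohamed function $1+\sum_{s}|i^{s}R^{V}(y)|$ to tend to $\infty$ as $|y|\to\infty$, hence compact resolvent.) I expect the honest bookkeeping here — equivalently, the uniform subelliptic estimate of the previous paragraph — to be the main technical obstacle; it may be bypassed for the two statements actually used downstream, (i) and (iii).

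Finally, for (iii) split $\Lambda^{*}V^{0,1}=\Lambda^{0}\oplus\Lambda^{1}$ in complex dimension one and read off from \prettyref{eq:model Kodaira Lap.} that $\Box^{0}_{g^{V},R^{V},J^{V}}=bb^{\dagger}$ on functions whereas $\Box^{1}_{g^{V},R^{V},J^{V}}=b^{\dagger}b=bb^{\dagger}+2(\partial_{z}a+\partial_{\bar z}\bar a)$ on $(0,1)$-forms. On the $\Lambda^{1}$-component the Lichnerowicz formula \prettyref{eq:model Lichnerowicz} reads $\Box^{1}_{g^{V},R^{V},J^{V}}=\Delta_{g^{V},R^{V}}+R^{V}(w,\bar w)$, the curvature endomorphism acting there by the scalar $R^{V}(w,\bar w)\geq0$ exactly as in \prettyref{eq: semipositivity curvature estimate}; with (i) this yields $\mathrm{Spec}(\Box^{1}_{g^{V},R^{V},J^{V}})\subset[\inf\mathrm{Spec}(\Delta_{g^{V},R^{V}}),\infty)\subset[c,\infty)$. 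Since $bb^{\dagger}$ and $b^{\dagger}b$ are isospectral away from $0$ (if $bb^{\dagger}s=\mu s$ with $\mu\neq0$ then $b^{\dagger}s\neq0$ and $b^{\dagger}b(b^{\dagger}s)=\mu\,b^{\dagger}s$), it follows that $\mathrm{Spec}(\Box^{0}_{g^{V},R^{V},J^{V}})\setminus\{0\}\subset\mathrm{Spec}(\Box^{1}_{g^{V},R^{V},J^{V}})\subset[c,\infty)$, and taking the union over the two components gives $\mathrm{Spec}(\Box_{g^{V},R^{V},J^{V}})\subset\{0\}\cup[c,\infty)$.
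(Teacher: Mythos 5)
The overall shape of your argument matches the paper's --- treat $\Delta_{g^V,R^V}$ as a Fourier mode of the nilpotent sR Laplacian \prettyref{eq:nilpotent Laplacian}, use a subelliptic estimate, appeal to Persson for the essential spectrum, and use the Lichnerowicz formula plus supersymmetry for $\Box_{g^V,R^V,J^V}$ --- but your treatment of part (i) has a genuine gap, and it is the load‑bearing part since you reduce (iii) to it.

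The issue: after invoking the Rothschild--Stein estimate on $\mathbb{R}^n\times S^1$ you feed in the pullback of an eigensection at \emph{mode $k=1$}, where $\|\partial_\theta^{1/r}s\|=\|s\|$. This collapses the estimate to $\|s\|^2\le C(\lambda+1)\|s\|^2$, i.e. $\lambda\ge C^{-1}-1$, which is vacuous: nothing forces $C<1$. Separately ruling out $0$ as an \emph{eigenvalue} via non‑$L^2$ covariantly constant sections does not rule out $0$ as the infimum of the spectrum unless you already know the resolvent is compact, and compactness is exactly what you must \emph{not} assume here, since part (i) is stated without the non‑degeneracy hypothesis \prettyref{eq:non-degeneracy condition}. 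The Helffer--Mohamed fallback is stated without proof and, in the form you write it, without the usual bounded remainder; such a remainder‑free lower bound with a uniform constant is itself the hard content, so the appeal does not close the gap.

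What is missing is precisely the homogeneity trick the paper uses, and --- notably --- you actually wrote it down in your discussion of part (ii) but did not apply it to part (i). Because $R^V\in S^{r-2}V^*\otimes\Lambda^2 V^*$ is a \emph{homogeneous} tensor, the dilation $\mathscr{R}u(y)=u(yk^{1/r})$ gives the exact conjugation
\[
\Delta_{g^V,kR^V} \;=\; k^{2/r}\,\mathscr{R}\,\Delta_{g^V,R^V}\,\mathscr{R}^{-1},
\qquad\text{hence}\qquad
\textrm{Spec}\big(\Delta_{g^V,kR^V}\big)=k^{2/r}\,\textrm{Spec}\big(\Delta_{g^V,R^V}\big),
\]
by \prettyref{eq:model Bochner in coordinates}. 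Now run your subelliptic argument not at $k=1$ but for the \emph{family} $\Delta_k\coloneqq\Delta_{g^V,kR^V}$ as the $k$‑th Fourier mode of \prettyref{eq:nilpotent Laplacian}: the Rothschild--Stein estimate then yields $k^{2/r}\|s\|^2\le C(\lambda_k+1)\|s\|^2$, i.e. $\textrm{Spec}(\Delta_k)\subset[c_1k^{2/r}-c_2,\infty)$ just as in \prettyref{prop: gen. spectral gap}. Dividing by $k^{2/r}$ and letting $k\to\infty$ produces $\textrm{Spec}(\Delta_{g^V,R^V})\subset[c_1,\infty)$. The same mechanism feeds cleanly into (ii): under \prettyref{eq:non-degeneracy condition} the vanishing order is $<r-2$ away from $0$, so on $V\setminus B_1(0)$ the subelliptic gain improves to $1/(r-1)$, giving $\textrm{EssSpec}(\Delta_k)\subset[c_1k^{2/(r-1)}-c_2,\infty)$ via Persson; dividing by $k^{2/r}$ and sending $k\to\infty$ forces $\textrm{EssSpec}(\Delta_{g^V,R^V})=\emptyset$ because $2/(r-1)>2/r$. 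Your parts (ii) and (iii) are otherwise sound as sketched --- (ii) is essentially the scaling argument above in a different coordinate chart, and (iii) correctly combines \prettyref{eq:model Lichnerowicz} with semipositivity on $\Lambda^{0,1}$ and the McKean--Singer isospectrality of $bb^\dagger$ and $b^\dagger b$ away from $0$ --- so the single fix you need is to route part (i) through the deformed family $\Delta_{g^V,kR^V}$ rather than try to extract a gap directly at $k=1$.
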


\begin{proof}
The proof is similar to those of Proposition \prettyref{prop: gen. spectral gap}
and Corollary \prettyref{cor: spectral gap Dirac}. Introduce the
deformed Laplacian $\Delta_{k}\coloneqq\Delta_{g^{V},kR^{V}}$ obtained
by rescaling the tensor $R^{V}$. From \prettyref{eq:model Bochner in coordinates}
$\Delta_{k}=k^{2/r}\mathscr{R}\Delta_{g^{V},R^{V}}\mathscr{R}^{-1}$
are conjugate under the rescaling $\mathscr{R}:C^{\infty}\left(V;E\right)\rightarrow C^{\infty}\left(V;E\right)$,
$\left(\mathscr{R}u\right)\left(x\right)\coloneqq u\left(yk^{1/r}\right)$
implying 
\begin{align}
\textrm{Spec}\left(\Delta_{k}\right) & =k^{2/r}\textrm{Spec}\left(\Delta_{g^{V},R^{V}}\right)\nonumber \\
\textrm{EssSpec}\left(\Delta_{k}\right) & =k^{2/r}\textrm{EssSpec}\left(\Delta_{g^{V},R^{V}}\right)\label{eq: rescaling of spectra}
\end{align}
By an argument similar to Proposition \prettyref{prop: gen. spectral gap},
one has $\textrm{Spec}\left(\Delta_{k}\right)\subset\left[c_{1}k^{2/r}-c_{2},\infty\right)$
for some $c_{1},c_{2}>0$ for $R^{V}\neq0$. From here $\textrm{Spec}\left(\Delta_{g^{V},R^{V}}\right)\subset\left[c,\infty\right)$
follows. Next, under the non-degeneracy condition, the order of vanishing
of the curvature homogeneous curvature $R^{V}$ (of the homogeneous
connection $a^{R^{V}}$\prettyref{eq:connection from tensot}) is
seen to be maximal at the origin: $\textrm{ord}_{y}\left(R^{V}\right)<r-2$
for $y\neq0$. Following a similar sub-elliptic estimate \prettyref{eq:local subelliptic estimate}
on $V\times S_{\theta}^{1}$ as in Proposition \prettyref{prop: gen. spectral gap},
we have 
\[
k^{2/\left(r-1\right)}\left\Vert u\right\Vert ^{2}\leq C\left[\left\langle \Delta_{k}u,u\right\rangle +\left\Vert u\right\Vert _{L^{2}}^{2}\right],\quad\forall u\in C_{c}^{\infty}\left(V\setminus B_{1}\left(0\right)\right),
\]
holds on the complement of the unit ball centered at the origin. Combining
the above with Persson's characterization of the essential spectrum
(cf.\ \cite{Persson1960}, \cite[Chapter 3]{Agmon-book-1982}) 
\[
\textrm{EssSpec}\left(\Delta_{k}\right)=\sup_{R}\inf_{\substack{\left\Vert u\right\Vert =1\\
u\in C_{c}^{\infty}\left(V\setminus B_{R}\left(0\right)\right)
}
}\left\langle \Delta_{k}u,u\right\rangle ,
\]
we have $\textrm{EssSpec}\left(\Delta_{k}\right)\subset\left[c_{1}k^{2/\left(r-1\right)}-c_{2},\infty\right)$.
From here and using \prettyref{eq: rescaling of spectra}, $\textrm{EssSpec}\left(\Delta_{g^{V},R^{V}}\right)=\emptyset$
follows.

The proof of the final part is similar following $k^{2/r}\textrm{Spec}\left(\Box_{g^{V},R^{V},J^{V}}\right)=\textrm{Spec}\left(\Box_{g^{V},kR^{V},J^{V}}\right)=\textrm{Spec}\left(\Box_{k}\right)\subset\left\{ 0\right\} \cup\left[c_{1}k^{2/r}-c_{2},\infty\right),$
$\Box_{k}\coloneqq\Box_{g^{V},kR^{V},J^{V}}$, by an argument similar
to Corollary \prettyref{cor: spectral gap Dirac}. 
\end{proof}
Next, the heat $e^{-t\Delta_{g^{V},R^{V}}}$, $e^{-t\Box_{g^{V},R^{V},J^{V}}}$
and wave $e^{it\sqrt{\Delta_{g^{V},R^{V}}}}$, $e^{it\sqrt{\Box_{g^{V},R^{V},J^{V}}}}$
operators being well-defined by functional calculus, a finite propagation
type argument as in \prettyref{lem: Localization lemma} gives $\varphi\left(\Delta_{g^{V},R^{V}}\right)\left(.,0\right)\in\mathcal{S}\left(V\right)$,
$\varphi\left(\Box_{g^{V},R^{V},J^{V}}\right)\left(.,0\right)\in\mathcal{S}\left(V\right)$
are in the Schwartz class for $\varphi\in\mathcal{S}\left(\mathbb{R}\right)$.
Further, when $\textrm{EssSpec}\left(\Delta_{g^{V},R^{V}}\right)=\emptyset$
any eigenfunction of $\Delta_{g^{V},R^{V}}$ also lies in $\mathcal{S}\left(V\right)$.
Finally, on choosing $\varphi$ supported close to the origin, the
Schwartz kernel $\Pi^{_{g^{V},R^{V},J^{V}}}\left(.,0\right)\in\mathcal{S}\left(V\right)$
of the projector $\Pi^{_{g^{V},R^{V},J^{V}}}$ onto the kernel of
$\Box_{g^{V},R^{V},J^{V}}$ is also of Schwartz class.

We now state another proposition regarding the heat kernel of $\Delta_{g^{V},R^{V}}$.
Below we denote $\lambda_{0}\left(\Delta_{g^{V},R^{V}}\right)\coloneqq\textrm{inf Spec}\left(\Delta_{g^{V},R^{V}}\right)$
. 
\begin{prop}
\label{prop:large time heat =00003D000026 bottom of spec}For each
$\varepsilon>0$ there exist $t,R>0$ such that the heat kernel 
\[
\frac{\int_{B_{R}\left(0\right)}dx\left[\Delta_{g^{V},R^{V}}e^{-t\Delta_{g^{V},R^{V}}}\right]\left(x,x\right)}{\int_{B_{R}\left(0\right)}dx\,e^{-t\Delta_{g^{V},R^{V}}}\left(x,x\right)}\leq\lambda_{0}\left(\Delta_{g^{V},R^{V}}\right)+\varepsilon
\]
\end{prop}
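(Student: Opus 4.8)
The plan is to reduce both the numerator and the denominator to integrals against a single auxiliary spectral measure on the spectrum of $\Delta:=\Delta_{g^{V},R^{V}}$, and then to exploit that the Gibbs weight $e^{-t\lambda}$ concentrates at the bottom of the support of that measure as $t\to\infty$. First I would write $\lambda_{0}:=\lambda_{0}(\Delta)=\textrm{inf Spec}(\Delta)>0$ (the positivity coming from \prettyref{prop:model spectra}) and let $\{E_{\lambda}\}$ be the spectral resolution, $\Delta=\int_{\lambda_{0}}^{\infty}\lambda\,dE_{\lambda}$. Since $\Delta$ is elliptic with polynomial coefficients, $\mathbf{1}_{B_{R}}e^{-t\Delta}\mathbf{1}_{B_{R}}$ and $\mathbf{1}_{B_{R}}\Delta e^{-t\Delta}\mathbf{1}_{B_{R}}$ are trace class with smooth diagonal kernels — routine, via the Hilbert--Schmidt factorisation $e^{-t\Delta}=e^{-\frac{t}{2}\Delta}e^{-\frac{t}{2}\Delta}$ together with $\int_{V}|e^{-s\Delta}(x,y)|^{2}\,dy=e^{-2s\Delta}(x,x)<\infty$ — so their traces equal the integrals of those diagonals over $B_{R}$. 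Setting $\nu_{R}(\lambda):=\textrm{tr}\bigl(\mathbf{1}_{B_{R}}E_{\lambda}\mathbf{1}_{B_{R}}\bigr)$, which is finite (since $\mathbf{1}_{B_{R}}E_{\lambda}\mathbf{1}_{B_{R}}\le e^{t\lambda}\mathbf{1}_{B_{R}}e^{-t\Delta}\mathbf{1}_{B_{R}}$), nondecreasing and right continuous in $\lambda$, hence a positive Borel measure $d\nu_{R}$ supported in $[\lambda_{0},\infty)$, the spectral theorem gives
\[
\int_{B_{R}}e^{-t\Delta}(x,x)\,dx=\int_{\lambda_{0}}^{\infty}e^{-t\lambda}\,d\nu_{R}(\lambda),\qquad\int_{B_{R}}\bigl[\Delta e^{-t\Delta}\bigr](x,x)\,dx=\int_{\lambda_{0}}^{\infty}\lambda e^{-t\lambda}\,d\nu_{R}(\lambda),
\]
so the quantity to be bounded is the weighted mean $\rho_{R}(t):=\bigl(\int\lambda e^{-t\lambda}\,d\nu_{R}\bigr)\big/\bigl(\int e^{-t\lambda}\,d\nu_{R}\bigr)$ of $\lambda$ against the probability measure $e^{-t\lambda}\,d\nu_{R}/\!\int e^{-t\lambda}\,d\nu_{R}$.

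The proof would then rest on two claims. \textbf{(A)} $\limsup_{t\to\infty}\rho_{R}(t)\le\mu_{*}(R):=\inf\textrm{supp}(d\nu_{R})$: fixing $\eta>0$, with $C:=\int\lambda e^{-\lambda}\,d\nu_{R}<\infty$ and $m:=d\nu_{R}\bigl([\mu_{*},\mu_{*}+\tfrac{\eta}{2})\bigr)>0$ (positive by the definition of $\mu_{*}$), one splits the numerator at $\mu_{*}+\eta$ to get, for $t\ge1$, $\int\lambda e^{-t\lambda}\,d\nu_{R}\le(\mu_{*}+\eta)\int e^{-t\lambda}\,d\nu_{R}+C\,e^{-(t-1)(\mu_{*}+\eta)}$, while the denominator is $\ge m\,e^{-t(\mu_{*}+\eta/2)}$; dividing, $\rho_{R}(t)\le\mu_{*}+\eta+\tfrac{C}{m}e^{\mu_{*}+\eta}e^{-t\eta/2}$, and letting $t\to\infty$ and then $\eta\downarrow0$ yields (A). \textbf{(B)} $\mu_{*}(R)\le\lambda_{0}+\tfrac{\varepsilon}{2}$ for $R$ large: since $E_{\lambda_{0}+\varepsilon/2}\ne0$ (it projects onto a spectral subspace meeting a neighbourhood of $\textrm{inf Spec}(\Delta)$), pick $0\ne g\in\textrm{Ran}\,E_{\lambda_{0}+\varepsilon/2}$; then $E_{\lambda_{0}+\varepsilon/2}(\mathbf{1}_{B_{R}}g)\to E_{\lambda_{0}+\varepsilon/2}g=g\ne0$ in $L^{2}$ as $R\to\infty$, so for $R$ large $\mathbf{1}_{B_{R}}E_{\lambda_{0}+\varepsilon/2}\mathbf{1}_{B_{R}}\ne0$, i.e. $\nu_{R}(\lambda_{0}+\tfrac{\varepsilon}{2})>0$, which forces $\mu_{*}(R)\le\lambda_{0}+\tfrac{\varepsilon}{2}$ (and trivially $\mu_{*}(R)\ge\lambda_{0}$, as $d\nu_{R}$ lives on $[\lambda_{0},\infty)$).

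Putting these together, given $\varepsilon>0$ I would first fix $R$ as in (B) and then, using (A), choose $t$ large enough that $\rho_{R}(t)\le\mu_{*}(R)+\tfrac{\varepsilon}{2}\le\lambda_{0}+\varepsilon=\lambda_{0}(\Delta_{g^{V},R^{V}})+\varepsilon$, which is the assertion. The conceptual heart — and the step I expect to be the main obstacle — is claim (B): the ball must be taken large enough that its characteristic function "sees" the bottom of the spectrum of the noncompact model operator, so that $\mu_{*}(R)$ actually decreases to $\lambda_{0}$; the Laplace-type concentration (A) and the trace-class/Fubini identifications of diagonal-kernel integrals with spectral integrals are soft. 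Note that no nondegeneracy of $R^{V}$ is needed here — only that $\Delta_{g^{V},R^{V}}$ is bounded below, as recorded in \prettyref{prop:model spectra}.
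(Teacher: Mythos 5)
Your proposal is correct and proves the statement, but you package the argument differently from the paper. The paper translates $\Delta_{g^{V},R^{V}}$ to $P\coloneqq\Delta_{g^{V},R^{V}}-\lambda_{0}$, splits the numerator at a spectral threshold $4\varepsilon$ via the projector $\Pi^{P}_{[0,4\varepsilon]}$ (low-energy part bounded by $4\varepsilon$ times the denominator, high-energy part by an operator-norm bound $ce^{-3\varepsilon t}$), and lower-bounds the denominator by $\tfrac{1}{2}e^{-2\varepsilon t}$ using an approximate eigenfunction $\psi_{\varepsilon}$ with $\|P\psi_{\varepsilon}\|\le\varepsilon$ together with the idempotency of $\Pi^{P}_{[0,2\varepsilon]}$ and Cauchy--Schwarz. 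You abstract the same two ingredients into the localized density-of-states measure $d\nu_{R}(\lambda)=d\,\mathrm{tr}\bigl(\mathbf{1}_{B_{R}}E_{\lambda}\mathbf{1}_{B_{R}}\bigr)$: your claim (A) (concentration of the Gibbs weight at $\inf\mathrm{supp}\,d\nu_{R}$) is a cleaner, fully measure-theoretic version of the paper's spectral splitting, and your claim (B) (that $\inf\mathrm{supp}\,d\nu_{R}\le\lambda_{0}+\varepsilon/2$ for $R$ large, proved by truncating a vector in $\mathrm{Ran}\,E_{\lambda_{0}+\varepsilon/2}$) replaces the paper's explicit approximate-eigenfunction/Cauchy--Schwarz bound on $\int_{B_{R}}\Pi^{P}_{[0,2\varepsilon]}(x,x)\,dx$ with a soft nonvanishing argument. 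Both proofs choose $R$ depending on $\varepsilon$ first and then send $t\to\infty$, and both ultimately use that the heat semigroup is locally trace class and that $\lambda_{0}$ lies in the spectrum; yours trades the paper's explicit constant-chasing for a bit of overhead in setting up the measure $d\nu_{R}$ (finiteness of $\nu_{R}(\lambda)$, the Hilbert--Schmidt identification of the on-diagonal integrals with traces, and the interchange of trace with the spectral integral — all fine, as you note, but worth spelling out in a final write-up). One small point in (B) worth making explicit: $E_{\lambda_{0}+\varepsilon/2}\mathbf{1}_{B_{R}}g\neq0$ for $R$ large forces $\mathbf{1}_{B_{R}}E_{\lambda_{0}+\varepsilon/2}\mathbf{1}_{B_{R}}\neq0$ precisely because $\langle\mathbf{1}_{B_{R}}E_{\lambda_{0}+\varepsilon/2}\mathbf{1}_{B_{R}}g,g\rangle=\|E_{\lambda_{0}+\varepsilon/2}\mathbf{1}_{B_{R}}g\|^{2}>0$, so the trace is strictly positive.
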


\begin{proof}
Setting $P\coloneqq\Delta_{g^{V},R^{V}}-\lambda_{0}\left(\Delta_{g^{V},R^{V}}\right)$
it suffices to show 
\[
\frac{\int_{B_{R}\left(0\right)}dx\left[Pe^{-tP}\right]\left(x,x\right)}{\int_{B_{R}\left(0\right)}dx\,e^{-tP}\left(x,x\right)}\leq\varepsilon
\]
for some $t,R>0$. With $\Pi_{\left[0,x\right]}^{P}$ denoting the
spectral projector onto $\left[0,x\right]$, we split the numerator
\[
\int_{B_{R}\left(0\right)}dx\left[Pe^{-tP}\right]\left(x,x\right)=\int_{B_{R}\left(0\right)}dx\left[\Pi_{\left[0,4\varepsilon\right]}^{P}Pe^{-tP}\right]\left(x,x\right)+\int_{B_{R}\left(0\right)}dx\left[\left(1-\Pi_{\left[0,4\varepsilon\right]}^{P}\right)Pe^{-tP}\right]\left(x,x\right).
\]
From $P\geq0$, $\Pi_{\left[0,4\varepsilon\right]}^{P}Pe^{-tP}\leq4\varepsilon e^{-tP}$
and $\left(1-\Pi_{\left[0,4\varepsilon\right]}^{P}\right)Pe^{-tP}\leq ce^{-3\varepsilon t}$,
$\forall t\geq1$, we may bound 
\begin{equation}
\frac{\int_{B_{R}\left(0\right)}dx\left[Pe^{-tP}\right]\left(x,x\right)}{\int_{B_{R}\left(0\right)}dx\,e^{-tP}\left(x,x\right)}\leq4\varepsilon+\frac{ce^{-3\varepsilon t}R^{n-1}}{\int_{B_{R}\left(0\right)}dx\,e^{-tP}\left(x,x\right)}\label{eq:breakup est.}
\end{equation}
$\forall R,t\geq1$. Next, as $0\in\textrm{Spec}\left(P\right)$ there
exists $\left\Vert \psi_{\varepsilon}\right\Vert _{L^{2}}=1$, $\left\Vert P\psi_{\varepsilon}\right\Vert _{L^{2}}\leq\varepsilon$.
It now follows that $\left\Vert \psi_{\varepsilon}-\Pi_{\left[0,2\varepsilon\right]}^{P}\psi_{\varepsilon}\right\Vert \leq\frac{1}{2}$
and hence 
\[
\begin{split}\frac{1}{2} & =-\frac{1}{4}+\int_{B_{R_{\varepsilon}}\left(0\right)}dx\left|\psi_{\varepsilon}\left(x\right)\right|^{2}\leq\int_{B_{R_{\varepsilon}}\left(0\right)}dx\left|\int dy\Pi_{\left[0,2\varepsilon\right]}^{P}\left(x,y\right)\psi_{\varepsilon}\left(y\right)\right|^{2}\\
 & \leq\int_{B_{R_{\varepsilon}}\left(0\right)}dx\left(\int dy\Pi_{\left[0,2\varepsilon\right]}^{P}\left(x,y\right)\Pi_{\left[0,2\varepsilon\right]}^{P}\left(y,x\right)\right)=\int_{B_{R_{\varepsilon}}\left(0\right)}dx\Pi_{\left[0,2\varepsilon\right]}^{P}\left(x,x\right),
\end{split}
\]
for some $R_{\varepsilon}>0$, using $\left(\Pi_{\left[0,2\varepsilon\right]}^{P}\right)^{2}=\Pi_{\left[0,2\varepsilon\right]}^{P}$
and Cauchy-Schwartz. This gives 
\[
\int_{B_{R_{\varepsilon}}\left(0\right)}dx\,e^{-tP}\left(x,x\right)\geq\frac{e^{-2\varepsilon t}}{2}\,,\quad t>1.
\]
Plugging this last inequality into \prettyref{eq:breakup est.} gives
\[
\frac{\int_{B_{R_{\varepsilon}}\left(0\right)}dx\left[Pe^{-tP}\right]\left(x,x\right)}{\int_{B_{R_{\varepsilon}}\left(0\right)}dx\,e^{-tP}\left(x,x\right)}\leq4\varepsilon+ce^{-\varepsilon t}R_{\varepsilon}^{n-1}
\]
from which the theorem follows on choosing $t$ large. 
\end{proof}
\bibliographystyle{siam}
\bibliography{biblio}

\end{document}